\documentclass[reqno,12pt]{amsart}
 
\usepackage[T1]{fontenc}
\usepackage{lmodern}
\usepackage[protrusion=true,expansion=false]{microtype}
\usepackage{amsmath,amssymb,amsthm,mathtools,mathrsfs,esint}
\usepackage{aliascnt}
\usepackage{enumitem}
\usepackage{hyperref,cleveref,tikz}

\numberwithin{equation}{section}
\newtheorem{thm}{Theorem}[section]
\newtheorem{lem}[thm]{Lemma}
\newtheorem{prop}[thm]{Proposition}

\newtheorem{cor}[thm]{Corollary}

\theoremstyle{remark}
\newtheorem{rem}[thm]{Remark}

\newcommand{\R}{\mathbb R}
\newcommand{\dd}{\,d}
\newcommand{\loc}{\mathrm{loc}}
\newcommand{\dist}{\operatorname{dist}}
\newcommand{\tr}{\operatorname{tr}}
\newcommand{\Id}{\operatorname{Id}}
\newcommand{\Acal}{\mathcal A}
\newcommand{\Xbf}{\mathbf X}
\newcommand{\Lscr}{\mathscr L}
\newcommand{\Pscr}{\mathscr P}
\newcommand{\Mscr}{\mathscr M}
\newcommand{\Wolff}{\mathbf W}
\newcommand{\weakto}{\rightharpoonup}

\title[Liouville classification and Harnack inequalities]
{Critical $p$-Laplace equations with monotone coefficients: Liouville classification and a Schoen-type Harnack inequality}
\author{Yi Ru-Ya Zhang}
\date{\today}

\address{State Key Laboratory of Mathematical Sciences, Academy of Mathematics and Systems Science, Chinese Academy of Sciences, Beijing 100190, China}
\address{Institute of Mathematics, Academy of Mathematics and Systems Science, Chinese Academy of Sciences, Beijing 100190, China}
\email{yzhang@amss.ac.cn}

\thanks{The author was supported by the National Key R\&D Program of China (Grant Nos. 2025YFA1018400 and 2021YFA1003100), the NSFC (Grant Nos. 12288201 and 12571128), the Chinese Academy of Sciences, and the CAS Project for Young Scientists in Basic Research (Grant No. YSBR-031).}

\subjclass[2020]{35J92, 35B08, 35B33}
\keywords{Critical $p$-Laplacian, Liouville theorem, Harnack inequality, invariant tensor, Pohozaev identity, nonlinear potential theory}

\begin{document}

\begin{abstract}
We investigate positive weak solutions of the critical $p$-Laplace equation
$$
 -\Delta_p u = u^{p^*-1} h(u), \qquad 1 < p < n,
$$
where $h$ is a positive, bounded, continuous, and nonincreasing function. Our first main result is a complete classification of normalized, bounded, positive entire solutions for every equation in a compact family determined by $h$: Any such solution must coincide with an Aubin--Talenti profile. Moreover, the existence of an Aubin--Talenti profile as a solution implies that $h$ is constant on the entire interval of values attained by that profile. 

Subsequently, applying this classification result, we establish the following scale-invariant Schoen-type estimate
$$
 \left(\sup_{B_R} u\right)\left(\inf_{B_{2R}} u\right)^{p-1}
 \le C R^{p-n}
$$
for nonnegative weak solutions defined in $B_{3R}$. As a direct corollary, we obtain a fully unrestricted Liouville theorem: Every positive entire solution must coincide with an Aubin--Talenti profile. For the purely critical equation, we also show that the corresponding Liouville classification is equivalent to a Schoen-type Harnack inequality. 

The arguments in this work give quasilinear versions of the Kelvin transform and the method of moving spheres, which was previously available only in the semilinear setting, and also yield alternative proofs of the classical results. The technique developed here can likely be extended to a wider class of Liouville-type problems for critical equations.
\end{abstract}

\maketitle

\section{Introduction and main results}
\label{sec:introduction}

Liouville theorems and scale invariant Harnack estimates are two closely related components of the compactness theory for critical elliptic equations.
The model semilinear problem is
\begin{equation}\label{eq:intro-yamabe}
 -\Delta u=u^{\frac{n+2}{n-2}},\qquad u>0,\qquad \text{in }\R^n,
\end{equation}
for $n\ge3$.  Its geometric precursor is the rigidity theorem of Obata \cite{O1971}. The classification of \eqref{eq:intro-yamabe} in the absence of any \emph{a priori} decay assumptions was subsequently obtained by Caffarelli–Gidas–Spruck in their seminal manuscript \cite{CGS1989} via moving plane method, and a simplified argument  was later provided by Chen–Li \cite{CL1991}. In the same conformally invariant setting, Schoen's sup--inf estimate, 
recorded and developed in a series of remarkable works \cite{SZ1996,L1999,LZ2003}, provides the local quantitative counterpart of the entire classification. Li--Zhang \cite{LZ2003}, in particular, treated a broad class of critical semilinear nonlinearities and made 
explicit the interaction between Liouville and Harnack statements.

For the $p$-Laplacian with $p\ne2$, the corresponding theory is substantially less rigid at the level of available tools.  The Kelvin transform, in particular, no longer preserves the equation, and the moving plane/sphere argument applied in the case $p=2$ does not directly extend. Indeed, the Kelvin transform has no reasonable counterpart for general values of the exponent $1<p<n$ with $p\neq 2$; see e.g. \cite{L2016} and the reference therein.

For the purely critical equation
\begin{equation}\label{eq:intro-pure-critical}
 -\Delta_p u = u^{p^*-1}\quad\text{in }\R^n,
 \qquad p^*:=\frac{np}{n-p},
\end{equation}
a classification in the natural energy space was established through the works of Damascelli--Merchán--Montoro--Sciunzi \cite{DMMS2014}, V\'etois \cite{V2016}, and Sciunzi \cite{S2016}. An anisotropic finite-energy theory in convex cones was subsequently developed by Ciraolo--Figalli--Roncoroni \cite{CFR2020}. Beyond the energy framework, Catino--Monticelli--Roncoroni \cite{CMR2023} initiated an integral-identity approach that applies in several regimes of the parameter $p$. More recently, Ou \cite{O2025} obtained a comprehensive classification result without additional constraints for the range $p>(n+1)/3$ by employing an elegantly transformed stress-tensor approach. Subsequently, V\'etois \cite{V2024} refined this result by further lowering the admissible threshold for $p$ in dimensions $n\ge4$.In addition, Ciraolo–Gatti \cite{CG2026} classified bounded, or moderately growing, solutions satisfying the sharp  infimum decay on balls, and related extensions to convex cones were derived in \cite{CDGL2026}. These contributions rest on distinct and non-nested sets of assumptions: Some results classify arbitrary positive solutions only for a restricted range of exponents $p$, whereas others cover the full interval $1<p<n$ under supplementary conditions such as finite energy, boundedness, prescribed growth, or specific asymptotic behavior at infinity. Moreover, the author is also partially motivated by the recent progress on the stability of $p$-Sobolev inequalities; see \cite{CFMP2009, FN2019,FZ2022,CG20262} and \cite{LZ2025}, and the earlier work \cite{BE1991, CFM2018, FG2020, DSW2025} in the case $p=2$.

The purpose of this paper is to obtain a classification-to-Harnack theory for the full range $1<p<n$ under a monotonicity assumption on the nonlinearity.
We consider
\begin{equation}\label{eq:g-structure}
 -\Delta_pu=g(u),  \qquad  g(s)=s^q h(s),  \qquad  q:=p^*-1,
\end{equation}
where
\begin{equation}\label{eq:h-assumptions}
 h\in C((0,\infty)),  \qquad  0<L\le h(s)\le\Lambda<\infty,
 \qquad  h\text{ is nonincreasing}.
\end{equation}
We further set $g(0)=0$.  By \eqref{eq:h-assumptions}, the limits
$$
 h_0:=\lim_{s\to 0^+}h(s),  \qquad h_\infty:=\lim_{s\to\infty}h(s)
$$
exist and belong to $[L,\Lambda]$.

A fundamental aspect of the blow-up problem is that the height of the sequence of functions is not restricted to converge solely to zero or to diverge to infinity. If its amplitude tends to a finite
number $\tau>0$, the limiting equation is
$$
 -\Delta_pV=V^q h(\tau V),
$$
which is not a pure-power equation a priori.  Thus even a complete classification of \eqref{eq:intro-pure-critical} would not, by itself, close the blow-up argument for a general $h$.  This motivates us to consider the following compact family
\begin{equation}\label{eq:defn-ftau}
 g_\tau(t):=
 \begin{cases}
 t^qh(\tau t),&0<\tau<\infty,\\
 h_0t^q,&\tau=0,\\
 h_\infty t^q,&\tau=\infty.
 \end{cases}
\end{equation}
For a more detailed discussion, we refer the reader to Proposition~\ref{prop:point-selection} below, and in particular to the normalized formulation given by \eqref{eq:Utau-normalized}.

Our first main result classifies precisely the normalized entire profiles that can arise from this family. 
Set
$$
 p':=\frac{p}{p-1},  \qquad  a:=\frac{n-p}{p}.
$$
As for the fully unrestricted version, see Corollary~\ref{cor:full-liouville} below. 

\begin{thm}
\label{thm:frozen-classification}
Let $\tau\in[0,\infty]$, and suppose that
\begin{equation}\label{eq:frozen-equation}
 V\in W^{1,p}_{\loc}(\R^n),
 \qquad  -\Delta_pV=g_\tau(V)\quad\text{in }\R^n,
 \qquad  0<V\le1,
 \qquad  V(0)=1.
\end{equation}
Then
$$
 V(x)=\left(1+\gamma_\tau|x|^{p'}\right)^{-a},
$$
where
\begin{equation}\label{eq:gamma-tau}
 \gamma_\tau  =\frac1{p'}  \left(\frac{a^{1-p}\widehat h_\tau}{n}\right)^{1/(p-1)},
 \qquad 
 \widehat h_\tau:=
 \begin{cases}
 h(\tau),&0<\tau<\infty,\\
 h_0,&\tau=0,\\
 h_\infty,&\tau=\infty.
 \end{cases}
\end{equation}
If $0<\tau<\infty$, the existence of such a profile also forces
\begin{equation}\label{eq:h-flat-below-tau}
 h(s)=h(\tau)
 \qquad\text{for every }0<s\le\tau.
\end{equation}
\end{thm}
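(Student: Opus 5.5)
The plan has three stages. First, obtain sharp asymptotics at infinity for $V$. Second, use a Pohozaev-type identity in which the monotonicity of $h$ gives the integrand a sign; this forces $h$ to be flat, as in \eqref{eq:h-flat-below-tau}. Third, reduce to the purely critical equation \eqref{eq:intro-pure-critical} and identify the profile.

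\emph{Stage 1: decay.} Since $V>0$ is $p$-superharmonic in $\R^n$ and $g_\tau\ge 0$, comparison with the fundamental solution on exterior domains gives the lower bound $V(x)\ge c|x|^{-(n-p)/(p-1)}$ for $|x|\ge1$. The matching upper bound
$$
V(x)\le C|x|^{-(n-p)/(p-1)},\qquad |\nabla V(x)|\le C|x|^{-(n-1)/(p-1)},
$$
is not available from integral estimates alone. The Serrin--Zou type bound for $-\Delta_pV\ge LV^q$ only gives $\int_{B_R}V^q\lesssim R^{(n-p)/p}$, which is not summable. I therefore expect to derive the upper bound from a quasilinear substitute for the Kelvin transform combined with a moving-spheres argument. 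The idea is to compare $V$ on $\R^n\setminus B_\lambda(x_0)$ with a reflected, rescaled competitor that is a subsolution of a comparable equation. This is where the nonincreasing $h$ should enter: the rescaling makes the competitor larger, which lowers $h$, so the reflected function is a subsolution of the same equation. The sphere radii are then pushed to their limit. Given the decay of $V$, the Wolff potential estimate $V(x)\lesssim \inf V+\Wolff_{1,p}^{g_\tau(V)}(x,r)$ upgrades it to the stated rates, and $C^{1,\alpha}$ estimates applied to rescalings on dyadic annuli give the gradient rate. This stage is the main obstacle. What I would try first is to run moving spheres on the $p$-harmonic-type transformed quantity $V^{-(p-1)/(n-p)}$, whose behaviour is linear at infinity for the model profile.

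\emph{Stage 2: Pohozaev and flatness.} Let $G_\tau(s)=\int_0^s g_\tau$. With the decay from Stage 1, all boundary terms on $\partial B_R$ are $O(R^{-\epsilon})$, and the Pohozaev identity becomes
$$
\int_{\R^n}\Big(nG_\tau(V)-\tfrac{n-p}{p}Vg_\tau(V)\Big)\dd x=0.
$$
Justifying the identity for weak solutions requires the standard regularization of Degiorgi/Lou type. Since $h$ is nonincreasing,
$$
G_\tau(s)=\int_0^s t^qh(\tau t)\dd t\ \ge\ h(\tau s)\frac{s^{q+1}}{q+1}=\frac{sg_\tau(s)}{p^*},
$$
so the integrand equals $n\bigl(G_\tau(V)-Vg_\tau(V)/p^*\bigr)\ge0$. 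Equality holds at a point exactly when $h$ is constant on $(0,\tau V(x)]$. By continuity the integrand then vanishes identically. Taking $x=0$, where $V(0)=1$, gives $h\equiv h(\tau)$ on $(0,\tau]$, which is \eqref{eq:h-flat-below-tau}. Because $V\le1$, it follows that $g_\tau(V)=\widehat h_\tau V^q$ everywhere. For $\tau\in\{0,\infty\}$ the nonlinearity is already a pure power.

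\emph{Stage 3: identification.} The function $V$ now solves $-\Delta_pV=\widehat h_\tau V^q$ and lies in $\mathcal D^{1,p}$, by the decay from Stage 1. The finite-energy classification of Damascelli--Merchán--Montoro--Sciunzi, Vétois and Sciunzi \cite{DMMS2014,V2016,S2016} then gives $V=\lambda^{a}(1+\gamma\lambda^{p'}|x-x_0|^{p'})^{-a}$. Alternatively, the moving-spheres argument from Stage 1 already yields radial symmetry about every sphere centre, which gives this form directly. The conditions $\max V=V(0)=1$ force $x_0=0$ and $\lambda=1$. Finally, $\gamma$ is determined by substituting the radial formula into the ODE
$$
-(r^{n-1}|V'|^{p-2}V')'=\widehat h_\tau r^{n-1}V^q
$$
and comparing leading coefficients at $r=0$, which yields \eqref{eq:gamma-tau}.
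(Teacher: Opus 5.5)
\paragraph{Gap: Stage 1 is unproved, and the tool you propose for it does not exist for $p\neq 2$.}
Your Stages 2 and 3 are correct \emph{if} you have $V\le C|x|^{-\alpha}$ and $|\nabla V|\le C|x|^{-\alpha-1}$. In that case:
\begin{itemize}
\item the Pohozaev boundary terms on $\partial B_R$ are $O(R^{-\alpha})$;
\item the nonnegative continuous integrand $nG_\tau(V)-aVg_\tau(V)$ must vanish identically, and evaluating at $x=0$ gives \eqref{eq:h-flat-below-tau};
\item $V$ has finite energy, so \cite{DMMS2014,V2016,S2016} identify the profile.
\end{itemize}
But Stage 1 is not a technical preliminary; it is the whole content of the theorem. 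For $p\neq2$ there is no Kelvin transform to run moving spheres with. The reflected competitor $(\lambda/|x-x_0|)^{\alpha}V\bigl(x_0+\lambda^2(x-x_0)/|x-x_0|^2\bigr)$ satisfies no equation comparable to $-\Delta_pV=g_\tau(V)$, not even when $g\equiv0$. For example, the transform of the $p$-harmonic function $x_1$ is $x_1|x|^{-\alpha-2}$. Near $\{x_1=0\}$ its $p$-Laplacian has the sign of $(p-2)x_1$, so it is neither a sub- nor a supersolution (cf.\ \cite{L2016}). Your heuristic ``the competitor is larger, so $h$ is smaller, so it is a subsolution'' is exactly the semilinear Li--Zhang mechanism, and it works only for $p=2$. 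Without a comparison principle you can neither start nor move the spheres. The Wolff-potential step presupposes the decay rather than producing it. Note also that for constant $h$, Stage 1 alone would reduce the classification of bounded entire solutions to the finite-energy theorem. That reduction is precisely what was unavailable for the full range $1<p<n$: compare the restricted ranges of $p$ in \cite{O2025,V2024} and the extra infimum-decay hypothesis in \cite{CG2026}.

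\paragraph{What the paper does instead: a decay-free argument.}
It sets $w=V^{-1/a}$ and works with the modified $P$-function $Q_w=\mathfrak k_\tau(w)+b_p|\nabla w|^p/w$. The weak Bochner identity \eqref{eq:bochner} makes $Q_w$ a subsolution of a weighted linearized operator, and the possibly singular part of $dh$ enters with a favourable sign. The sharp bound $Q_w\le c_{\tau,0}$ then comes from a maximum principle for $Q_w\phi_{\ell,\mathfrak m}(w)$:
\begin{itemize}
\item the ODE-designed multiplier prevents the supremum from escaping along $w\to\infty$, using only $|\nabla w|\le Cw$, which follows from $0<V\le1$ and the relative gradient estimate in unit balls;
\item maximizing sequences with bounded $w$ are handled by translation compactness.
\end{itemize}
A tangent analysis at the minimum point, a punctured-ball barrier and an open--closed argument upgrade this to $Q_w\equiv c_{\tau,0}$. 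Equality in the Bochner identity then kills the trace-free tensor, so $\Xbf_w=(c_{\tau,0}/n)x$. Flatness of $h$ follows from $\mathfrak e_\tau(w)\equiv0$ on the range $[1,\infty)$ of $w$. Decay of $V$ is an output of this argument, not an input. Your Pohozaev route to \eqref{eq:h-flat-below-tau} is valid, but only once the decay has been obtained by some independent means, which your proposal does not supply.
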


The last conclusion is an essential part of the theorem.  At a finite  amplitude, the result is simultaneously a rigidity theorem for the solution and for the coefficient: The apparently non-pure equation becomes pure power on the full amplitude interval attained by the profile.

Our second main result is the local estimate needed to control all scales between a concentrating core and the boundary.

\begin{thm}
\label{thm:main}
Assume the conditions \eqref{eq:g-structure} and \eqref{eq:h-assumptions}.  Then there exists 
$$C=C(n,p,g)>0$$
with the following property:  For every $R>0$ and
$x_0\in\R^n$, every nonnegative weak solution
$$
 u\in W^{1,p}_{\loc}(B_{3R}(x_0)),
 \qquad -\Delta_pu=g(u)\quad\text{in }B_{3R}(x_0),
$$
satisfies
\begin{equation}\label{eq:main-harnack}
 \left(\sup_{B_R(x_0)}u\right)
 \left(\inf_{B_{2R}(x_0)}u\right)^{p-1}
 \le C R^{p-n}.
\end{equation}
\end{thm}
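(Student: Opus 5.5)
We argue by contradiction combined with a blow-up analysis, with Theorem~\ref{thm:frozen-classification} supplying the classification of the limiting profiles. By translation and the scaling $u_R(x)=R^{(n-p)/p}u(x_0+Rx)$, which keeps the equation in the class \eqref{eq:g-structure} with $h$ replaced by $h(R^{-a}\,\cdot)$ (still nonincreasing, with the same bounds $L,\Lambda$), it suffices to treat $R=1$, $x_0=0$. The constant must then be uniform over this rescaled family. The family of nonlinearities $\{t^qh(\lambda t)\}_{\lambda>0}$, closed up by the endpoint cases $h_0t^q$ and $h_\infty t^q$, is exactly the compact family $g_\tau$ of \eqref{eq:defn-ftau}. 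Suppose now that solutions $u_k$ in $B_3$, with $h$ replaced by $h_k=h(\lambda_k\cdot)$, satisfy $(\sup_{B_1}u_k)(\inf_{B_2}u_k)^{p-1}\to\infty$.

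\textbf{Step 1 (point selection).} Since $\inf_{B_2}u_k$ is bounded by the weak Harnack/Wolff-potential lower bound for supersolutions (a positive supersolution satisfies $\inf_{B_2}u\lesssim$ const after comparing $-\Delta_p u\ge L u^q$; otherwise $u\equiv0$), we must have $M_k=\sup_{B_1}u_k\to\infty$. We apply the standard doubling selection (Proposition~\ref{prop:point-selection}) to $u_k^{p/(n-p)}$ weighted by distance to $\partial B_{3/2}$. This produces points $x_k$ and scales $r_k=u_k(x_k)^{-1/a}\to0$ such that $V_k(y)=u_k(x_k+r_ky)/u_k(x_k)$ satisfies $V_k\le 2$ on balls of radius $\to\infty$ and $V_k(0)=1$. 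The function $V_k$ solves $-\Delta_pV_k=V_k^qh_k(u_k(x_k)V_k)$. By $C^{1,\alpha}$ estimates and compactness, $V_k\to V$ locally uniformly, where $V$ solves \eqref{eq:frozen-equation} (after a harmless renormalization of the sup, handled as in \eqref{eq:Utau-normalized}) for some $\tau\in[0,\infty]$. Theorem~\ref{thm:frozen-classification} then identifies $V$ as an Aubin--Talenti bubble with $\gamma_\tau$ bounded above and below in terms of $L,\Lambda$.

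\textbf{Step 2 (decay beyond the bubble).} This is the main obstacle: we must show that $u_k$ stays comparable to the bubble tail, $u_k(x)\lesssim u_k(x_k)^{-1}|x-x_k|^{-(n-p)/(p-1)}$, on the annulus $r_k\ll|x-x_k|\le 1$, and simultaneously that $\inf_{B_2}u_k$ is controlled by the tail. The intended route uses the paper's quasilinear substitute for moving spheres. One compares $u_k$ with its quasilinear Kelvin-type reflection across spheres $\partial B_\lambda(x_k)$ and uses the monotonicity of $h$, since $h$ nonincreasing makes the reflected function a supersolution, to slide $\lambda$ from the bubble scale up to order one. Failing that, a Wolff-potential argument works: the superharmonic function $u_k$ satisfies $u_k(x)\ge c\,\Wolff_{1,p}^{\mu_k}(x)$ with $\mu_k=g(u_k)$. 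The bubble core carries mass $\mu_k(B_{Cr_k}(x_k))\approx c_0\,u_k(x_k)^{-(p-1)}\cdot u_k(x_k)^{p-1}$, rescaled so that $\mu_k(B_{Cr_k})\gtrsim M_k^{-(p-1)}$ in the natural normalization. This gives the lower bound $\inf_{B_2}u_k\gtrsim u_k(x_k)^{-1/(p-1)}\cdot$const, i.e.\ $\inf_{B_2}u_k\ge c\,M_k^{-1/(p-1)}$ — which is the wrong direction for a contradiction.

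\textbf{Step 3 (the contradiction).} We therefore extract the contradiction from the upper side instead. Using the Kelvin/moving-sphere comparison, we show $u_k\le C u_k(x_k)^{-1}|x-x_k|^{-(n-p)/(p-1)}$ at distance one, so $\inf_{B_2}u_k\le C\,u_k(x_k)^{-1/(p-1)}$. Since $M_k\le C u_k(x_k)$ by the selection, this yields $M_k(\inf_{B_2}u_k)^{p-1}\le C$, contradicting the assumption. The delicate points are the uniformity in $\tau$ and the case in which the selected point $x_k$ is not the maximum point; the latter is handled by the selection's local maximality together with Harnack chains at the scales where no further bubble forms.
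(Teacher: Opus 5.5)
There is a genuine gap, and it is the step you yourself call the main obstacle (Steps 2--3): the upper bound for $u_k$ along the neck.

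Your primary route compares $u_k$ with a ``quasilinear Kelvin-type reflection'' across $\partial B_\lambda(x_k)$, claimed to be a supersolution when $h$ is nonincreasing. No such transform exists for $p\neq2$. The Kelvin transform does not send $p$-harmonic functions, let alone solutions of $-\Delta_pu=g(u)$, to solutions or supersolutions; this is exactly why the semilinear moving-sphere method does not extend. You never construct the reflection, so there is nothing to slide. Your fallback Wolff-potential argument gives, as you note, only a lower bound on $\inf_{B_2}u_k$. Step~3 then reinvokes the missing comparison, so everything beyond blow-up and classification is unproved.

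Two smaller points. First, the bubble tail at distance $r$ is $\kappa_\tau u_k(x_k)^{-1/(p-1)}r^{-\alpha}$, not $u_k(x_k)^{-1}r^{-\alpha}$. Second, asking for bubble-tail control out to distance of order one is more than the available argument delivers. One only gets $v_j<\vartheta U_\tau$ on $B_{S_j}$ with $S_j=o(\Gamma_j)$ in bubble units. The paper compensates with a scale tied to the blow-up quantity. It selects with the weight $\dist(x,\partial B_{2R_j})^{n-p}u_j$, so that $\Xi_j=A_j\sigma_j^{n-p}m_j^{p-1}\to\infty$ and $\Gamma_j^{n-p}\ge\Xi_j$. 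It then takes $S_j=2\eta_j\Gamma_j$ with $\eta_j=\Xi_j^{-1/(2(n-p))}$, and the tail bound on $|y|=\eta_j\Gamma_j$ gives $\Xi_j\le C\Xi_j^{1/2}$.

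The missing idea is the paper's first-contact argument, which replaces moving spheres. Suppose $v_j$ first touches $\vartheta U_\tau$ at a radius $\rho_j$, and set $W_j(x)=\rho_j^\alpha v_j(\rho_jx)$.

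\begin{enumerate}
\item Dominance inside $B_{\rho_j}$ gives $C^1$ compactness away from $0$. The limit is a $p$-harmonic $W\le\vartheta\kappa_\tau|x|^{-\alpha}$, and the source measures converge to $\mathcal M_\tau\delta_0$.
\item Kichenassamy--V\'eron then gives $W=\kappa_\tau|x|^{-\alpha}+\beta+o(1)$.
\item Monotonicity of $h$ enters only through the Pohozaev sign $nG-asg\ge0$, which forces $\beta\le0$.
\item Comparison with truncated poles forces $\beta\ge0$, and strict comparison then gives $W\equiv\kappa_\tau|x|^{-\alpha}$.
\item To pass through the contact point $\mathbf e_j$, where no compactness is known a priori, the Kilpel\"ainen--Mal\'y lower Wolff bound is used in the direction opposite to yours. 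Boundedness of $W_j(\mathbf e_j)$ yields a Morrey bound $\mu_j(B_r(\mathbf e_j))\le Cr^{n-p}$. This excludes a concentrating intrinsic rescaling, which would carry a fixed source quantum.
\item So Harnack applies across $\mathbf e_j$, giving $W_j(\mathbf e_j)\to\kappa_\tau$, which contradicts the contact value $\vartheta\kappa_\tau$.
\end{enumerate}

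None of these ingredients appears in your proposal: the Pohozaev identity with the isolated-pole analysis, the de-concentration at the moving contact point, and the $o(\Gamma_j)$ scale choice.
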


\begin{rem} 
\label{rem:constant-dependence}
The contradiction argument proving Theorem~\ref{thm:main} is carried out for one fixed nonlinearity $g$.  Accordingly, the theorem states only the dependence 
$$C=C(n,p,g).$$
In particular, in the current manuscript we do not claim uniformity over all nonincreasing functions $h$ satisfying the same numerical bounds $L$ and $\Lambda$.
\end{rem}

To the best of our knowledge, even when $p=2$, no alternative proof of Theorems~\ref{thm:frozen-classification} and~\ref{thm:main} is currently available that completely avoids the moving-plane/moving-sphere method. 
For the reader’s convenience and in order to clarify the core underlying mechanism, we include in Appendix~\ref{appen:B} a concise exposition of the principal arguments in the case $p = 2$ with the pure-power nonlinearity.

Thanks to the profound result of Ciraolo--Gatti \cite[Theorem~1.2]{CG2026}, the combination of the two theorems stated above yields an immediate corollary, which provides a global classification without requiring any further auxiliary hypotheses.

\begin{cor} 
\label{cor:full-liouville}
Assume \eqref{eq:g-structure} and \eqref{eq:h-assumptions}, and let
$$
 u\in W^{1,p}_{\loc}(\R^n),
 \qquad u>0,  \qquad -\Delta_pu=u^qh(u)\quad\text{in }\R^n.
$$
Then there exist $M>0$ and $x_0\in\R^n$ such that
\begin{equation}\label{eq:full-liouville-profile}
 u(x)=M\left( 1+\gamma_M M^{p'/a}|x-x_0|^{p'}
 \right)^{-a},
\end{equation}
where $\gamma_M$ is given by \eqref{eq:gamma-tau} with $\tau=M$.
In particular, $M=\max_{\R^n}u$.  Moreover,
$$
 h(s)=h(M) \qquad\text{for every }0<s\le M.
$$
\end{cor}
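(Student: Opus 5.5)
The corollary follows from Theorem~\ref{thm:main}, the result of Ciraolo--Gatti \cite[Theorem~1.2]{CG2026}, and Theorem~\ref{thm:frozen-classification}, in three steps.

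\textbf{Step 1: decay from the Harnack estimate.} Fix $x_0$ and let $R\to\infty$. Apply Theorem~\ref{thm:main} on $B_{3R}(x_0)\subset\R^n$:
$$
 \Bigl(\sup_{B_R(x_0)}u\Bigr)\Bigl(\inf_{B_{2R}(x_0)}u\Bigr)^{p-1}\le CR^{p-n}.
$$
Since $u>0$ is continuous (by $C^{1,\alpha}$ regularity), $\inf_{B_{2R}}u$ is positive for every $R$. Two consequences follow.

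First, $u$ is bounded. Fix $R_0$. For $R\ge R_0$ we have $\sup_{B_R}u\le C R^{p-n}(\inf_{B_{2R}}u)^{1-p}$. One route is to take a point $y$ with $u(y)$ large and apply the estimate at $x_0=y$ with radius chosen so that $\inf_{B_{2R}(y)}u\ge \inf_{B_{2R+|y|}(0)}u$. A cleaner route is to feed the estimate into Ciraolo--Gatti, whose hypotheses I expect are bounded or moderately growing solutions together with the sharp lower decay $\inf_{B_R}u\gtrsim R^{-(n-p)/(p-1)}$, or an equivalent sup-inf bound.

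Second, the estimate gives exactly the growth/decay coupling in their hypothesis. Combining $\sup_{B_R}u\ge u(x_0)>0$ with the estimate yields
$$
 \inf_{B_{2R}}u\le C'R^{-(n-p)/(p-1)},
$$
and combining $\inf_{B_{2R}}u\ge\inf_{B_{2R'}}u$ for $R\le R'$ with the estimate controls the growth of $\sup_{B_R}u$. Matching these with the precise hypotheses of \cite[Theorem~1.2]{CG2026} is the main obstacle. Their theorem may be stated for the pure power, so I would check that its proof (or its conclusion of boundedness, hence finite energy) uses only $g(u)\approx u^q$, which holds here since $L\le h\le\Lambda$.

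\textbf{Step 2: boundedness and attainment of the maximum.} Once $u$ is bounded and has the decay $u\lesssim |x|^{-(n-p)/(p-1)}$ that the Ciraolo--Gatti analysis provides, the supremum $M:=\sup u$ is attained at some $x_0$, because $u\to0$ at infinity.

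\textbf{Step 3: normalization and classification.} Set
$$
 V(x):=M^{-1}u\bigl(x_0+M^{-1/a}x\bigr).
$$
The exponent is chosen to preserve the equation. With $\lambda=M^{-1/a}$ we have $-\Delta_pV=M^{1-p}\lambda^{p}g(u)$, and $\lambda^p=M^{-p/a}=M^{-p^2/(n-p)}$. Since $q=p^*-1$, a direct computation gives $M^{q-(p-1)-p/a}=1$, so
$$
 -\Delta_pV=V^qh(MV)=g_M(V).
$$
Moreover $0<V\le1$ and $V(0)=1$. Theorem~\ref{thm:frozen-classification} with $\tau=M$ then gives $V=(1+\gamma_M|x|^{p'})^{-a}$ and $h\equiv h(M)$ on $(0,M]$. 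Undoing the scaling produces \eqref{eq:full-liouville-profile}, with the factor $M^{p'/a}$ coming from $|M^{1/a}(x-x_0)|^{p'}$. Finally, $M=\max u$ because $V\le1$ with equality at the origin.
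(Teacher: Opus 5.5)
There is a genuine gap in your Steps 1--2: boundedness of $u$ is never established, and Ciraolo--Gatti is invoked before it applies.

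\textbf{Boundedness.} Theorem~\ref{thm:main} only says $\sup_{B_R}u\le CR^{p-n}(\inf_{B_{2R}}u)^{1-p}$. This gives nothing unless you also have a \emph{lower} bound $\inf_{B_{2R}}u\gtrsim R^{-\alpha}$, with $\alpha=(n-p)/(p-1)$. Your ``first route'' implicitly needs exactly this bound. Your ``cleaner route'' is circular, because \cite[Theorem~1.2]{CG2026} takes boundedness (or moderate growth) as a hypothesis rather than producing it.

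The missing ingredient is a barrier argument. Since $-\Delta_pu>0$, compare $u$ on $B_S\setminus\overline{B_1}$ with the $p$-harmonic functions
\[
\psi_S(x)=m_1\frac{|x|^{-\alpha}-S^{-\alpha}}{1-S^{-\alpha}},\qquad m_1:=\min_{\partial B_1}u>0.
\]
Letting $S\to\infty$ gives $\inf_{B_{2R}}u\ge c_0R^{-\alpha}$ for $R\ge1$. Then Theorem~\ref{thm:main}, together with $\alpha(p-1)=n-p$, gives $\sup_{B_R}u\le Cc_0^{1-p}$ uniformly in $R$.

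This is also all Theorem~\ref{thm:main} can give. Being scale invariant, it yields boundedness but not $u\to0$ at infinity. So attainment of $\sup u$ is not available from it.

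\textbf{Order of the two classification results.} Your Step~2 applies Ciraolo--Gatti to $-\Delta_pu=u^qh(u)$ with possibly nonconstant $h$. Their theorem concerns the pure critical power only. Neither its conclusion nor its decay consequence can be cited here, and the hope that the proof ``uses only $g(u)\approx u^q$'' is unverified. The order must be reversed, and attainment of the maximum is not needed.

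With $M:=\sup u<\infty$, proceed as follows.
\begin{enumerate}
\item Take $x_j$ with $u(x_j)\to M$ and set $V_j(y):=M^{-1}u(x_j+M^{-1/a}y)$. All $V_j$ solve the \emph{same} equation $-\Delta_pV_j=g_M(V_j)$ with $0<V_j\le1$.
\item Lemma~\ref{lem:uniform-C1-compactness} gives a $C^1_{\loc}$ limit $V$ with $-\Delta_pV=g_M(V)$, $0<V\le1$ and $V(0)=1$.
\item Theorem~\ref{thm:frozen-classification} at $\tau=M$, applied to this limit, yields $h\equiv h(M)$ on $(0,M]$. The limit need not be a translate of $u$, since $x_j$ may escape to infinity.
\item Because $0<u\le M$, $u$ solves $-\Delta_pu=h(M)u^q$. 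Hence $\widetilde u:=h(M)^{1/(q-p+1)}u$ solves the pure critical equation, is bounded, and satisfies the sharp infimum decay $\inf_{B_R}\widetilde u\le CR^{-\alpha}$ that you correctly derived.
\item Only now does \cite[Theorem~1.2]{CG2026} apply, giving the Aubin--Talenti form; $M$ and $\gamma_M$ are fixed by matching the maximum and the coefficient $h(M)$.
\end{enumerate}

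Your Step~3 scaling computation is correct. However, it is only useful if you already know the maximum is attained, and in this argument that fact comes out of the classification rather than going into it.
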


\begin{rem} 
\label{rem:pure-power-equivalence}
For $h \equiv \lambda > 0$, Corollary~\ref{cor:full-liouville} yields that the Harnack inequality implies the unrestricted Liouville theorem for positive entire solutions. Conversely, an unrestricted pure-power Liouville theorem immediately provides the normalized classification of entire blow-up profiles required for the blow-up analysis, and the subsequent first-contact argument on suitable spheres then yields
the desired Harnack estimate. Consequently, within the structural framework developed in the current manuscript, these two properties are equivalent for the purely critical equation. In contrast, when $h$ is nonconstant, the finite-amplitude  equation is not necessarily of pure-power type, and this is precisely why Theorem~\ref{thm:frozen-classification} for the full compact family is genuinely indispensable.
\end{rem}

\subsection{Main ideas}

Let us describe the two parts of the proof and their relation to the existing literature.  The first part consists of a global classification result for the entire profile, whose underlying idea is  partially inspired by the seminal contribution of  \cite{O2025} by Ou.  For a solution $V$ to \eqref{eq:frozen-equation}, we introduce
\begin{equation}\label{eq:intro-transform}
 w:=V^{-p/(n-p)}=V^{-1/a}
\end{equation}
and the nonlinear stress
$$
 \Xbf_w:=|\nabla w|^{p-2}\nabla w.
$$
The transformation and trace-free derivative of $\Xbf_w$ are closely related to the quantities introduced by Ou in the study of the pure critical equation \cite[Lemma~2.1]{O2025}. In the pure-power case, Ou completed his argument by establishing a global weighted integral estimate, which yields the vanishing of the trace-free component of the stress tensor; see \cite[(3.34)]{O2025} and the subsequent discussion. The invariant-tensor framework and the dimensional cancellation phenomena underlying such vector fields are further developed by Ma and Wu \cite[Section~2.3, (2.8)]{MW2024}. In the semilinear Riemannian context, Ciraolo, Farina, and Polvara also derive  classification results and ambient rigidity by proving the constancy of an appropriately defined $P$-function in \cite{CFP2025}, and this is further generalized  for the quasilinear case  in the subsequent work \cite{SW2025} on the Riemannian manifold.

Our argument is based on a weak Bochner-type identity \eqref{eq:bochner} for a modified $P$-function denoted by $Q_w$, which appears to be new, although closely related ideas can already be found in certain computations in \cite{O2025}; see also \cite{MW2024}. More specifically, in the pure-power setting, the specialization of \eqref{eq:bochner} is (on the regular set) essentially equivalent to Ou’s identities, in particular \cite[Lemma 2.2, Formula (2.12)]{O2025}, upon choosing $q=1-n$ and $m=0$ in that reference.
Furthermore, while \cite[Proposition 2.3]{O2025} yields a global one-sided integral inequality, our identity \eqref{eq:bochner} leads to the fundamental PDE inequality \eqref{eq:completed-square} for the quantity $Q_w\phi_{\ell,\mathfrak m}$, where $\phi_{\ell,\mathfrak m}$ is a suitably constructed \emph{multiplier}. In this manner, via a nonlinear Kato-type inequality  in Lemma~\ref{lem:rayleigh}, the global integral coercivity estimate employed in \cite{O2025} is replaced with a local \emph{pointwise} coercive term; the author was later informed that a related idea was also applied in \cite{SW2025}. This inequality plays a central role in our setting: Indeed, it is employed together with the maximum principle to derive the subsequential upper bound \eqref{eq:Qw-upper}. 
This step is crucial because, in combination with \eqref{eq:completed-square}, it enables us to derive the desired rigidity result directly in the special case $p=2$ and $h\equiv 1$; see Appendix~\ref{appen:B} for additional discussion.  As for the full range $1<p<n$, however, the degeneracy of the underlying operator requires the introduction of further analytical tools, such as a tangent analysis and a punctured-ball barrier, to overcome the resulting difficulties. We emphasize that the use of the maximum principle in the normalized setting constitutes a principal distinction from the arguments in \cite{O2025} and \cite{SW2025}, where in both manuscripts global integral inequalities are eventually employed to establish the rigidity.

Let us be more specific. Indeed, there are two additional difficulties arising in our setting.  Firstly, $h$ is assumed only continuous and monotone, so its distributional derivative may contain a singular Cantor
part.  Thus we split the transformed coefficient into an absolutely continuous component and a nonnegative singular (Stieltjes) defect in \eqref{eq:defect-functions}.  Then after subtracting that defect from the transformed source, the pure-power first-order cancellation survives,
while the nonsmooth part enters the weak Bochner identity with a favorable sign; see Proposition~\ref{prop:bochner}. Secondly, a fixed power weight in the Bochner identity alone is not sufficient to deduce the desired rigidity, since it does not prevent a positive maximum of $Q_w$ from escaping from $\infty$. 
Consequently, we construct the aforementioned multiplier $\phi_{\ell,\mathfrak m}$ by solving the one-dimensional ordinary differential equation \eqref{eq:Y-ODE} backward, starting from the desired inequality \eqref{eq:completed-square}.
This gives a global bound from above for the modified $P$-function $Q_w$; see Proposition~\ref{prop:Q-bound}.  Now a tangent argument at a minimum, followed by employing a  barrier function on the punctured ball and an open-and-closed propagation, upgrades the scalar inequality to equality; see Proposition~\ref{prop:global-rigidity}. After that, the Bochner identity \eqref{eq:bochner} 
forces the tensor defect to vanish.  We refer to detailed comparisons with Ou's first-order and Bochner identities in Remark~\ref{rem:ou-bochner-comparison}, the multiplier design in Remark~\ref{rem:target-multiplier-design}, and the different closure mechanism in Remark~\ref{rem:ou-closure-comparison}; see also Remark~\ref{rem:stieltjes-splitting} for the Stieltjes decomposition.

The second part is the Harnack argument.  In an earlier joint work, Qin--Zhang \cite{QZ2026} proved a conditional
version under two inputs, a classification of the possible bounded entire blow-up limits and a preliminary neck estimate of order $|x|^{-a}$.  The preliminary estimate there supplies a uniform bound for the zeroth-order coefficient on a larger annulus at the first-crossing scale, which is precisely what makes the subsequent Harnack chain legitimate; see the proof of \cite[Lemma~3.7]{QZ2026}.
The current Theorem~\ref{thm:frozen-classification} completely removes the first conditional input.
Moreover, the second condition is removed by changing the order of the argument: Rather than use a neck bound to justify Harnack at first contact, we first derive the coefficient 
bound by  de-concentration via the celebrated work of Kilpel\"ainen and Mal\'y \cite{KM1994} on potential theory, and then apply Harnack inequality only afterwards. 
The exact distinction is recorded in Remark~\ref{rem:no-preliminary-neck}.



The principal contributions and the outline of the present manuscript can be summarized as follows.  First of all, we provide a complete classification of solutions within a compact family and establish rigidity of the coefficient at finite amplitude.  
Secondly, we generalize the transformed invariant-tensor method to the setting of merely continuous monotone coefficients by employing a weak Bochner identity \eqref{eq:bochner} in conjunction with a multiplier tailored to the target PDE inequality \eqref{eq:completed-square}.  
Thirdly, in the conditional Pohozaev-type argument, we replace the preliminary neck assumption with a first-contact de-concentration mechanism derived from potential-theoretic considerations.  
Fourthly, the ensuing Harnack estimate implies the full Liouville theorem stated in Corollary~\ref{cor:full-liouville}.

\subsection{Organization of the paper}

Section~\ref{sec:structure} records the relevant critical-exponent identities, establishes the sign of the Pohozaev invariant, and verifies the compactness properties of the nonlinear terms under consideration. It subsequently derives the weak-solution regularity framework that is employed throughout the article.  

Sections~\ref{sec:classification-start}--\ref{sec:classification-end} are devoted to the proof of Theorem~\ref{thm:frozen-classification}. In this part, we obtain a weak Bochner-type identity, construct an appropriate multiplier tailored to the target PDE inequality \eqref{eq:completed-square}, establish a global sharp bound for $Q_w$, and finally upgrade the resulting   equality to a corresponding tensorial rigidity statement.  

Section~\ref{sec:pole} develops a local Pohozaev identity and carries out the analysis of isolated $p$-harmonic poles. Section~\ref{sec:first-contact} proves the first-contact theorem and derives a de-concentration estimate at the moving contact point. Section~\ref{sec:final-proof} completes the contradiction argument on suitably chosen spheres and thereby establishes Theorem~\ref{thm:main}, together with Corollary~\ref{cor:full-liouville}.  

The concluding appendix presents the standard domain-variation derivation of the weak local Pohozaev identity, as well as a simplified argument in the special case $p=2$ and $h\equiv 1$.

\medskip
\noindent{\bf Acknowledgment}: The author expresses his sincere gratitude to Yanyan Li and Jingang Xiong for their valuable comments and suggestions on an earlier version of this manuscript.

The author used ChatGPT 5.6, together with Overleaf AI, to review the manuscript for grammar, clarity, calculations, logical consistency, and to correct certain technical details. The author then independently revised the text as needed and assumes full responsibility for all statements and conclusions in the final manuscript.

\section{Preliminaries}\label{sec:structure}

Recall that
$$
p^* := \frac{np}{n-p}, 
\qquad q := p^*-1,
$$
and set
\begin{equation*}
p':=\frac{p}{p-1}, \qquad a:=\frac{n-p}{p},
\qquad \alpha:=\frac{n-p}{p-1}=ap', 
\qquad b_p:=\frac{n(p-1)}p=\frac n{p'}.
\end{equation*}
We will repeatedly use the following identities.
\begin{equation}\label{eq:exponent-identities}
\begin{gathered}
a(q+1)=n,
\qquad aq=b_p+1=n-a,
\qquad q-p+1=\frac pa=\frac{p^2}{n-p},\\
\alpha(p-1)=n-p,
\qquad (\alpha+1)(p-1)=n-1,
\qquad n-\alpha q=-p'.
\end{gathered}
\end{equation}
In fact, applying $q+1=p^*=np/(n-p)$ into each expression gives these identities.

\subsection{Properties of the nonlinearity}
Define
$$
G(s):=\int_0^sg(t)\dd t.
$$
First of all, we show the following sign lemma. Note that the coefficient $a=(n-p)/p=n/p^*$ is forced by critical
scaling.  With this choice, the difference between the primitive term and the reaction term is exactly
$$
 nG(s)-asg(s)
 =n\int_0^s t^q\bigl(h(t)-h(s)\bigr)\,dt.
$$
Thus monotonicity of $h$ becomes a nonnegative Pohozaev defect, and equality detects the flat portions of $h$. See also \cite[Lemma 3.3]{QZ2026}. 

\begin{lem} 
For every $s\ge0$,
\begin{equation}\label{eq:pohozaev-sign}
nG(s)-a s g(s)\ge0.
\end{equation}
The same inequality is preserved by every amplitude scaling, i.e. for any $A>0$ 
$$g_A(t):=A^{-q} g(At), \qquad t>0$$
and spatial scaling, i.e. for any $\rho>0$
$$\widehat g(s)=\rho^n g(\rho^{-\alpha} s),\qquad s>0.$$
\end{lem}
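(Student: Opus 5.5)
The plan is to reduce \eqref{eq:pohozaev-sign} to the integral identity displayed just before the lemma. Using $a(q+1)=n$ from \eqref{eq:exponent-identities}, we have $as^{q+1}h(s)=n\int_0^s t^q h(s)\dd t$, while $nG(s)=n\int_0^s t^qh(t)\dd t$. The integral is finite because $h\le\Lambda$, so $G$ is well defined. Subtracting gives
$$
 nG(s)-asg(s)=n\int_0^s t^q\bigl(h(t)-h(s)\bigr)\dd t .
$$
Since $h$ is nonincreasing, $h(t)\ge h(s)$ for $0<t\le s$, so the integrand is nonnegative. The case $s=0$ is trivial because $g(0)=0$ and $G(0)=0$.

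For the amplitude scaling I would check that $g_A(t)=A^{-q}(At)^qh(At)=t^q h_A(t)$ with $h_A(t):=h(At)$. The function $h_A$ is again continuous, nonincreasing and bounded between $L$ and $\Lambda$. So $g_A$ has the same structure \eqref{eq:g-structure}--\eqref{eq:h-assumptions}, and the first part applies verbatim.

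For the spatial scaling I would compute $\widehat g(s)=\rho^n\rho^{-\alpha q}s^qh(\rho^{-\alpha}s)$. By \eqref{eq:exponent-identities}, $n-\alpha q=-p'$, so $\widehat g(s)=\rho^{-p'}s^q\widehat h(s)$ with $\widehat h(s)=h(\rho^{-\alpha}s)$. A positive constant multiple of a nonincreasing function is still nonincreasing, so the integral identity again gives a nonnegative right-hand side. The changed numerical bounds $\rho^{-p'}L$ and $\rho^{-p'}\Lambda$ are irrelevant to the sign.

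Alternatively, both scalings can be handled directly by substitution: $G_A(s)=A^{-q-1}G(As)$ and $\widehat G(s)=\rho^{n+\alpha}G(\rho^{-\alpha}s)$. Then each scaled Pohozaev quantity is a positive multiple of the original one evaluated at $As$ or at $\rho^{-\alpha}s$. There is no real obstacle here. The only point needing care is bookkeeping of the exponents, namely $a(q+1)=n$ and $n-\alpha q=-p'$.
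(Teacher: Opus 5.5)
Your proof is correct and essentially matches the paper's. The core inequality comes from $a(q+1)=n$ together with $h(t)\ge h(s)$ for $0<t\le s$. For the scalings, the paper uses exactly your alternative route, $G_A(t)=A^{-q-1}G(At)$ and $\widehat G(t)=\rho^{n+\alpha}G(\rho^{-\alpha}t)$, so each scaled Pohozaev quantity is a positive multiple of the original one. Your primary observation, that $g_A$ and $\widehat g$ keep the form $s^q$ times a nonincreasing function (using $n-\alpha q=-p'$), is an equally valid, cosmetically different way to reach the same conclusion.
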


\begin{proof}
At $s=0$ both terms vanish.  Let $s>0$.  Since $h$ is nonincreasing, $h(t)\ge h(s)$ for $0<t\le s$.  Therefore
$$
 G(s)=\int_0^st^qh(t)\dd t  \ge h(s)\frac{s^{q+1}}{q+1}.
$$
Since $n/(q+1)=a$ by the first identity in \eqref{eq:exponent-identities}, multiplication by $n$ gives
$nG(s)\ge a s^{q+1}h(s)=a s g(s)$.

Now we consider the scalings. 
For $A>0$, by a change of variable
$$
G_A(t):=\int_0^t g_A(s)\dd s=A^{-q-1}G(At).
$$
Then
$$
 n G_A(t)-a t g_A(t) =A^{-q-1}\bigl[nG(At)-a\,(At)g(At)\bigr]\ge0.
$$
Moreover, when a spatial blow-down is subsequently made with
$$
\widehat G(t):=\int_0^t \widehat g(s) \dd s=\rho^{n+\alpha}G(\rho^{-\alpha}t),
$$
then
$$
 n\widehat G(s)-a s\widehat g(s)
 =\rho^{n+\alpha} \bigl[nG(\rho^{-\alpha}s) -a\,(\rho^{-\alpha}s)g(\rho^{-\alpha}s)\bigr]\ge 0.
$$
\end{proof}

Recall $g_\tau$ defined in \eqref{eq:defn-ftau}.
We prove the following compactness result. 
\begin{lem}\label{lem:frozen-convergence}
If $A_j>0$ and $A_j\to\tau\in[0,\infty]$, then for $g_\tau$ defined in \eqref{eq:defn-ftau}, 
$g_{A_j}\to g_\tau$ locally uniformly on  $(0,\infty)$.
\end{lem}

\begin{proof}
Suppose first that $0<\tau<\infty$, and let
$K=[k_1,k_2]\Subset(0,\infty)$.
For all sufficiently large $j$,
$$
A_jt\in \left[\frac{\tau k_1}{2},\,2\tau k_2\right] \qquad \text{ for any }\ t\in K.
$$
Since $h$ is uniformly continuous on this fixed compact
interval,
\begin{align*}
 \sup_{t\in K}|g_{A_j}(t)-g_\tau(t)|
 &= \sup_{t\in K}  t^q|h(A_jt)-h(\tau t)|\\
 &\le k_2^q \sup_{t\in K}|h(A_jt)-h(\tau t)| \to 0.
\end{align*}

Now assume that $A_j\to 0$.  Then given any pair $0<m<M$, the nonincreasing property  of $h$ gives
$$
h(A_jM)\le h(A_jt)\le h(A_j m) \qquad \text{ for any} \ m\le t\le M.
$$
Note that both  the outer terms converge to $h_0$, uniformly squeezing the middle term and sending it to $h_0$ as well.
When $A_j\to\infty$, the same inequalities hold, and the two outer terms instead converge to $h_\infty$.
\end{proof}

\subsection{Weak-solution regularity}\label{sec:regularity}

Throughout this subsection, $\Omega\subset\R^n$ is open and
\begin{equation}\label{eq:weak-equation-general}
u\in W^{1,p}_{\loc}(\Omega),
\qquad  u\ge0,  \qquad
-\Delta_pu=g(u)\quad\text{in }\Omega.
\end{equation}
The weak formulation is meaningful since $u\in L^{p^*}_{\loc}$ and
$g(u)\le\Lambda u^{p^*-1}\in L^{(p^*)'}_{\loc}$.

Define
\begin{equation}\label{eq:defn-Ku}
 K_u(x):= 
 \begin{cases}
g(u(x))u(x)^{1-p} &u(x)>0,\\
0  &u(x)=0.
 \end{cases}
\end{equation}
Then
\begin{equation}\label{eq:estimate-Ku}
0\le K_u\le\Lambda u^{q-p+1},
\qquad K_u\in L^{n/p}_{\loc}(\Omega),
\end{equation}
since $(q-p+1)n/p=p^*$ by the third identity in \eqref{eq:exponent-identities}. The following lemma is sometimes called Brezis–Kato bootstrap.

\begin{lem} \label{lem:bk-step}
Let $B_r\subset\subset B_R\subset\subset\Omega$ and  $t\ge p$.  Suppose that  $u\in L^t(B_R)$. Then one has
\begin{equation}\label{eq:bk-step-estimate}
\|u\|_{L^{\chi t}(B_r)} \le C(n,p,t,r,R,u)\|u\|_{L^t(B_R)},
\qquad \chi:=\frac n{n-p}.
\end{equation}
Consequently, $u\in L^m_{\loc}(\Omega)$ for every finite $m$.
\end{lem}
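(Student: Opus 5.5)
We follow the classical Brezis--Kato scheme for the $p$-Laplacian: regard the equation as $-\Delta_p u = K_u u^{p-1}$ with the potential $K_u\in L^{n/p}_{\loc}(\Omega)$ from \eqref{eq:estimate-Ku}, and test with truncated powers of $u$. The point is that $L^{n/p}$ is exactly the critical integrability at which the potential term can be absorbed into the Sobolev term, once $K_u$ is split into a small-norm part and a bounded part. We fix a cutoff $\eta\in C_c^\infty(B_R)$ with $\eta\equiv1$ on $B_r$ and $|\nabla\eta|\le C/(R-r)$. We set $\beta:=t-p+1\ge1$, and for $k>0$ we write $u_k:=\min\{u,k\}$. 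The test function is
$$
\varphi:=\eta^p u\,u_k^{\beta-1},
$$
which belongs to $W^{1,p}_0$ because of the truncation and the cutoff.

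\textbf{Step 1 (energy inequality).} We plug $\varphi$ into the weak formulation. Since $\nabla(u u_k^{\beta-1})\cdot\nabla u\ge u_k^{\beta-1}|\nabla u|^2$, the standard computation gives a lower bound for the left side of the form $c\int\eta^p u_k^{\beta-1}|\nabla u|^p$ minus a cutoff error $C\int|\nabla\eta|^p u^p u_k^{\beta-1}$; the cutoff error is handled by Young's inequality. Setting $w:=\eta\,u\,u_k^{(\beta-1)/p}$, we have $|\nabla w|^p\lesssim \beta^{p}\bigl(\eta^p u_k^{\beta-1}|\nabla u|^p+|\nabla\eta|^pu^pu_k^{\beta-1}\bigr)$. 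Combining this with Sobolev's inequality yields
$$
\|w\|_{L^{p^*}}^p\le C\beta^{p}\Bigl(\int K_u\,w^p+\int|\nabla\eta|^pu^pu_k^{\beta-1}\Bigr).
$$

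\textbf{Step 2 (absorption; the main obstacle).} To absorb the potential term we split $K_u=K_u\chi_{\{K_u>M\}}+K_u\chi_{\{K_u\le M\}}$. By absolute continuity of the integral, we choose $M=M(u,t,R)$ so large that $\|K_u\chi_{\{K_u>M\}}\|_{L^{n/p}(B_R)}\le (2C\beta^p)^{-1}$. Hölder's inequality then gives
$$
\int K_u\chi_{\{K_u>M\}}w^p\le \tfrac{1}{2C\beta^p}\|w\|_{L^{p^*}}^p,
$$
so this piece is absorbed into the left side, and what remains is $M\int w^p\le M\int_{B_R}u^{t}$. This absorption is the only delicate point. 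It explains why the constant depends on $u$ itself (through the distribution of $K_u$) rather than only on norms, consistent with the notation $C(n,p,t,r,R,u)$. One must also check that every quantity is finite before absorbing, and this is exactly what the truncation $u_k$ guarantees. The cutoff error term is bounded by $C(R-r)^{-p}\int_{B_R}u^{t}$.

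\textbf{Step 3 (conclusion).} We let $k\to\infty$ by monotone convergence. On $B_r$ we have $w^{p^*}=u^{(t/p)p^*}=u^{\chi t}$, because $u\,u^{(\beta-1)/p}=u^{t/p}$ and $\tfrac{p^*}{p}=\chi$. This gives $\|u\|_{L^{\chi t}(B_r)}^{t}\le C\|u\|_{L^t(B_R)}^{t}$, which is \eqref{eq:bk-step-estimate} after taking $t$-th roots; the powers can be adjusted if one tracks homogeneity. For the final claim, we start from $u\in L^{p^*}_{\loc}$ (Sobolev, since $u\in W^{1,p}_{\loc}$, and $p^*\ge p$) and iterate along a finite chain of nested balls, using exponents $t_j=\chi^j p^*\to\infty$. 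Since any finite $m$ is reached after finitely many steps, a covering argument gives $u\in L^m_{\loc}(\Omega)$ for every finite $m$.
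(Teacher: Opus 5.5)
Your proposal is correct and follows the paper's proof essentially line by line: the same test function $\eta^p u\,u_k^{t-p}$ (your $\beta-1$ is the paper's $\beta$), the same auxiliary function $\eta u\,u_k^{(t-p)/p}$, absorption of the potential term by splitting $K_u$ at a level $M$ and applying H\"older and Sobolev, monotone convergence in the truncation level, and iteration starting from $t_0=p^*$. The only slip is cosmetic: in Step~1 the pointwise bound should read $|\nabla u|^{p-2}\nabla u\cdot\nabla(u\,u_k^{\beta-1})\ge u_k^{\beta-1}|\nabla u|^p$.
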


\begin{proof}
Choose $\eta\in C_c^\infty(B_R)$ with $0\le\eta\le1$ and $\eta=1$ on $B_r$.  For $L>0$, we consider the truncated function as
$$
u_L:=\min\{u,L\}, \qquad \beta:=t-p\ge 0. 
$$
Then by Sobolev chain rule, the function
$$
\varphi:=\eta^p u u_L^\beta
$$
belongs to $W^{1,p}_0(B_R)$.  This can be obtained first with a bounded smooth approximation of $s\mapsto s\min\{s,L\}^\beta$ and then by passage to the limit; thus it is an admissible test function.  Then the source term is integrable since both \eqref{eq:g-structure} and \eqref{eq:h-assumptions} yield
$$
g(u)u u_L^\beta \le \Lambda L^\beta u^{p^*}\in L^1(B_R).
$$
Moreover, almost everywhere one has
\begin{align*}
 \nabla\varphi={}&p\eta^{p-1}u u_L^\beta\nabla\eta
 +\eta^p u_L^\beta\nabla u  +\beta\eta^p u u_L^{\beta-1}\nabla u_L.
\end{align*}
  
We note that
$$\int_{B_R}\beta\eta^p u u_L^{\beta-1}|\nabla u|^{p-2}\nabla u\cdot\nabla u_{L}\, \dd x=\int_{B_R\cap \{u< L\}}\beta\eta^p u^{\beta}|\nabla u|^{p} \,dx\ge 0$$
as $\beta\ge 0.$
Let $v_L:=u u_L^{\beta/p}$.
Then testing \eqref{eq:weak-equation-general} with $\varphi$ gives
\begin{multline*}
     \int_{B_R}\eta^p u_L^\beta|\nabla u|^p\,dx + \int_{B_R\cap \{u< L\}}\beta\eta^p u^{\beta}|\nabla u|^{p} \,dx \\=   \int_{B_R}g(u)\eta^p u u_L^\beta \,dx -\int_{B_R} p \eta^{p-1} u u_L^{\beta}|\nabla u|^{p-2} \nabla u\cdot\nabla \eta\,dx.
\end{multline*}
Now by removing the second term on the left-hand side, which is positive, and estimating the cutoff term by Young's inequality, we conclude 
\begin{equation}\label{eq:bk-energy-u}
 \int_{B_R}\eta^p u_L^\beta|\nabla u|^p\,dx
 \le C(p)\left[\int_{B_R}K_u(\eta v_L)^p\,dx + \int_{B_R}v_L^p|\nabla\eta|^p \,dx \right].
\end{equation}
where we applied \eqref{eq:defn-Ku} to get
$$
g(u)\eta^p u u_L^\beta =K_u\eta^p u^p u_L^\beta =K_u(\eta v_L)^p.
$$

Observe that, on $\{u<L\}$,
$\nabla v_L=(t/p)u_L^{\beta/p}\nabla u$ as $t/p=\beta/p+1\ge 1$, whereas on $\{u>L\}$,
$\nabla v_L=L^{\beta/p}\nabla u$.  Thus
$$
|\nabla v_L|^p \le\left(\frac tp\right)^p u_L^\beta|\nabla u|^p.
$$
Combining this with \eqref{eq:bk-energy-u}, the product estimate for
$\nabla(\eta v_L)$, and the Sobolev inequality yields
\begin{align}\label{eq:bk-sobolev}
\|\eta v_L\|_{L^{p^*}(B_R)}^p &\le C(n,p)  \|\nabla(\eta v_L)\|_{L^{p}(B_R)}^p\notag\\
&\le   C(n,p)t^p\left[\int_{B_R}K_u(\eta v_L)^p \,dx + \int_{B_R}v_L^p|\nabla\eta|^p\,dx  \right].
\end{align}

Now fix $t$ and split $K_u$ at a level $M>0$.  By H\"older's inequality,
$$
 \int_{\{K_u>M\}}K_u(\eta v_L)^p\,dx
 \le \|K_u\mathbf1_{\{K_u>M\}}\|_{L^{n/p}(B_R)} \|\eta v_L\|_{L^{p^*}(B_R)}^p,
$$
where we note that the first factor tends to zero as $M\to\infty$ by \eqref{eq:estimate-Ku}.  Thus we can choose $M=M(n,p,t, K_u|_{B_R})\gg 1$ so that this term can be absorbed into the left side of \eqref{eq:bk-sobolev}.

On the other hand, on the set $\{K_u\le M\}$, the first term on the right-hand side of \eqref{eq:bk-sobolev} is bounded by
$M\int(\eta v_L)^p$.  Hence
$$
\|v_L\|_{L^{p^*}(B_r)}^p \le C(n,p,t,r,R,u)\int_{B_R}v_L^p \,dx.
$$
Then monotone convergence  gives
$$
\|u\|_{L^{\chi t}(B_r)}^t \le C(n,p,t,r,R,u)\|u\|_{L^t(B_R)}^t,\quad \text{ as $L\to\infty$}
$$
which implies \eqref{eq:bk-step-estimate} after taking the $t$-th root.

The Sobolev embedding gives $u\in L^{p^*}_{\loc}$.  Then starting with $t_0=p^*$, choose a finite strictly nested sequence of balls, apply the estimate \eqref{eq:bk-step-estimate} with $t_{k+1}=\chi t_k$, and iterate.  This gives the estimates with arbitrarily large finite exponents, and H\"older's inequality then gives the desired estimates with every smaller finite exponent. 
\end{proof}

For a nonnegative Radon measure $\mu$, define the truncated Wolff potential
\begin{equation}\label{eq:wolff}
 \Wolff_{1,p}^{\mu}(x;r) :=\int_0^r  \left(\frac{\mu(B_t(x))}{t^{n-p}}\right)^{1/(p-1)} \frac{\dd t}{t}.
\end{equation}
The potential estimate of Kilpel\"ainen and Mal\'y
\cite[Theorem~1.6]{KM1994} gives
\begin{equation}\label{eq:KM-estimate}
 c_1\Wolff_{1,p}^{\mu}(x;\rho) \le v(x)  \le c_2\inf_{B_\rho(x)}v      +c_3\Wolff_{1,p}^{\mu}(x;2\rho),
\end{equation}
for the nonnegative $p$-superharmonic representative $v$ of a solution of
$$
-\Delta_pv=\mu\ge0,
$$
whenever the corresponding larger ball is compactly contained in the domain; see also \cite{KM2012}.
We also refer to the foundational framework established by Heinonen, Kilpel\"ainen, and Martio \cite{HKM06} in their seminal monograph, which is   known within the analysis community as the "Finnish Bible". In addition, whenever we invoke the Kilpel\"ainen--Mal\'y framework, we consistently choose the canonical superharmonic representative, even though this implicit selection is not stated explicitly. 

\begin{prop} \label{prop:local-boundedness}
Every weak solution satisfying \eqref{eq:weak-equation-general} belongs to
$L^\infty_{\loc}(\Omega)$.
\end{prop}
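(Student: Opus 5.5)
The plan is to combine the Brezis--Kato bootstrap of Lemma~\ref{lem:bk-step} with the Kilpel\"ainen--Mal\'y upper bound \eqref{eq:KM-estimate}. Fix $x_0\in\Omega$ and a radius $\rho>0$ with $B_{4\rho}(x_0)\Subset\Omega$. By Lemma~\ref{lem:bk-step}, $u\in L^m(B_{4\rho}(x_0))$ for every finite $m$. Consequently the measure
$$
\mu:=g(u)\,dx\le \Lambda u^{q}\,dx
$$
has density $f:=g(u)\in L^m(B_{4\rho}(x_0))$ for every finite $m$. We fix one $m>n/p$.

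We regard $u$ (its canonical $p$-superharmonic representative, which exists because $-\Delta_p u=\mu\ge0$ and $u\ge0$) as a nonnegative solution of $-\Delta_p u=\mu$. Applying \eqref{eq:KM-estimate} at any $x\in B_\rho(x_0)$ gives
$$
u(x)\le c_2\inf_{B_\rho(x)}u+c_3\Wolff^{\mu}_{1,p}(x;2\rho),
$$
since $B_{2\rho}(x)\subset B_{3\rho}(x_0)\Subset\Omega$.

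The infimum term is controlled by averages: $\inf_{B_\rho(x)}u\le |B_\rho|^{-1/p}\|u\|_{L^p(B_{2\rho}(x_0))}$, which is finite and independent of $x$. For the Wolff potential, H\"older's inequality gives
$$
\mu(B_t(x))\le \|f\|_{L^m}|B_t|^{1-1/m}\le C\|f\|_{L^m}\,t^{n-n/m}.
$$
Hence
$$
\Bigl(\frac{\mu(B_t(x))}{t^{n-p}}\Bigr)^{1/(p-1)}\le C\,t^{(p-n/m)/(p-1)}.
$$
The exponent $(p-n/m)/(p-1)$ is positive because $m>n/p$, so $\int_0^{2\rho}t^{(p-n/m)/(p-1)}\frac{dt}{t}<\infty$ uniformly in $x$. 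This gives a uniform bound for $u$ on $B_\rho(x_0)$, and a covering argument yields $u\in L^\infty_{\loc}(\Omega)$.

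The only delicate point is legitimacy rather than computation. One must know that the weak solution agrees a.e.\ with the $p$-superharmonic function to which \eqref{eq:KM-estimate} applies, i.e.\ that a nonnegative $W^{1,p}_{\loc}$ supersolution with $L^1_{\loc}$ right-hand side has a lower semicontinuous representative given by $\operatorname{ess\,lim\,inf}$ (the standard result in \cite{HKM06}). The pointwise bound then holds everywhere for that representative, hence a.e.\ for $u$.

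If one prefers to avoid potential theory, an alternative is to continue the Moser iteration. Since $K_u\in L^{m}_{\loc}$ with $m>n/p$ by Lemma~\ref{lem:bk-step}, the standard Serrin/Trudinger local boundedness for $-\Delta_p u=K_u u^{p-1}$ gives the same conclusion.
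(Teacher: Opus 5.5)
Your proposal is correct and follows essentially the same route as the paper. Lemma~\ref{lem:bk-step} puts $g(u)\le\Lambda u^q$ in $L^s_{\loc}$ for some $s>n/p$. The Kilpel\"ainen--Mal\'y upper bound \eqref{eq:KM-estimate} is then applied with the infimum dominated by an average of $u$ and the Wolff potential bounded via H\"older's inequality, exactly as you do, uniformly for $x$ in a compact set.

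Your remark about using the canonical $p$-superharmonic representative is the same convention the paper adopts. Your choice $B_{4\rho}(x_0)\Subset\Omega$ also guarantees the ball $B_{3\rho}(x)$ required by \cite[Theorem~1.6]{KM1994} for every $x\in B_\rho(x_0)$.
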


\begin{proof}
By Lemma~\ref{lem:bk-step}, $u\in L^m_{\loc}$ for every finite $m$.  Choose $s>n/p$.  Since $g(u)\le\Lambda u^q$, taking $m=qs$ gives $g(u)\in L^s_{\loc}$.

Fix $B_{3\rho}(x)\subset\subset\Omega$ and let $\dd\mu=g(u)\dd x$.  For $0<t\le2\rho$, we estimate the measure $\mu$ via H\"older's inequality as follows: 
$$
 \mu(B_t(x)) \le |B_t|^{1-1/s}\|g(u)\|_{L^s(B_{2\rho}(x))}
 \le C(n) t^{n-n/s}\|g(u)\|_{L^s(B_{2\rho}(x))}.
$$
Plugging into \eqref{eq:wolff} with
$$\frac{n-\frac n s -(n-p)}{p-1} -1=\frac{p-\frac n s}{p-1}-1, $$
we deduce that
\begin{equation}\label{eq:wolff-Ls-bound}
 \Wolff_{1,p}^{\mu}(x;2\rho) \le C(n,p,s)\rho^{(p-n/s)/(p-1)}
 \|g(u)\|_{L^s(B_{2\rho}(x))}^{1/(p-1)}.
\end{equation}
The exponent is positive as $p-n/s>0$. The infimum appearing in the upper bound \eqref{eq:KM-estimate} is dominated by  
$\fint_{B_\rho(x)} u\,dx$, the mean value of $u$ over the ball $B_\rho(x)$.  Moreover, these mean values remain uniformly bounded as $x$ varies within any fixed compact subset.

As $g(u)\in L^s_{\loc}$, \eqref{eq:KM-estimate} and \eqref{eq:wolff-Ls-bound} yield a uniform pointwise bound on every compactly contained ball.  Hence $u\in L^\infty_{\loc}$.
\end{proof}

\begin{cor} \label{cor:C1-regularity}
Every solution satisfying \eqref{eq:weak-equation-general} belongs to $C^{1,\gamma}_{\loc}(\Omega)$ for some $\gamma\in(0,1)$.  If $\Omega$ is connected, then either $u\equiv0$ or $u>0$ in $\Omega$.
\end{cor}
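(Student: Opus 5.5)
Since $u\in L^\infty_{\loc}(\Omega)$ by Proposition~\ref{prop:local-boundedness}, the right-hand side $g(u)\le\Lambda u^q$ is locally bounded. I will therefore treat the equation as $-\Delta_pu=f$ with $f:=g(u)\in L^\infty_{\loc}(\Omega)$. Local H\"older continuity of $u$ follows from the De Giorgi--Moser theory for quasilinear equations with bounded right-hand side. The $C^{1,\gamma}_{\loc}$ regularity then follows from the classical gradient estimates of DiBenedetto and Tolksdorf (or Lieberman): for $f\in L^\infty_{\loc}$, every weak solution of $-\Delta_pu=f$ lies in $C^{1,\gamma}_{\loc}$, with $\gamma=\gamma(n,p)\in(0,1)$ and the norm controlled on compact subsets by $\|u\|_{L^\infty}$ and $\|f\|_{L^\infty}$ on a slightly larger set. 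No continuity of $f$ is needed, so the mere continuity of $h$ poses no difficulty.

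For the dichotomy, I would use V\'azquez's strong maximum principle. Write the equation as
$$
-\Delta_pu+K_u u^{p-1}=g(u)+K_uu^{p-1}\ge0 .
$$
More simply, note that $-\Delta_pu=g(u)\ge0$, so $u$ is a nonnegative $C^1$ $p$-superharmonic function. V\'azquez's theorem applies to $-\Delta_pu+\beta(u)\ge0$ with $\beta$ nondecreasing and $\beta(0)=0$; here one may take $\beta\equiv0$, and the Osgood-type condition $\int_0(\beta(s)s)^{-1/p}ds=\infty$ is trivially satisfied. Hence if $u(x_0)=0$ at some point, then $u\equiv0$ in a neighborhood of $x_0$. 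The zero set is therefore open. It is also closed by continuity, so connectedness of $\Omega$ gives either $u\equiv0$ or $u>0$ everywhere.

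An alternative route to the dichotomy is the weak Harnack inequality for nonnegative $p$-supersolutions (Trudinger): if $-\Delta_pu\ge0$ then $\inf_{B_\rho}u\ge c\,\big(\fint_{B_{2\rho}}u^{s}\big)^{1/s}$ for small $s>0$. This already gives the openness of the zero set, since vanishing at a point forces $u=0$ a.e.\ on a ball, and hence everywhere there by continuity. The Kilpel\"ainen--Mal\'y lower bound \eqref{eq:KM-estimate} is not needed.

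The main step is simply citing the $C^{1,\gamma}$ theory correctly, that is, confirming that boundedness of $f$ is sufficient. This reduces to Proposition~\ref{prop:local-boundedness}, so I expect no real obstacle.
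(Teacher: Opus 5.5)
Your proposal is correct and follows essentially the same route as the paper: local boundedness from Proposition~\ref{prop:local-boundedness}, then Tolksdorf's/DiBenedetto's interior gradient estimate for $-\Delta_p u=f$ with $f\in L^\infty_{\loc}$, and V\'azquez's strong minimum principle with $\beta\equiv0$ for the dichotomy. One hypothesis of V\'azquez's theorem deserves explicit mention: $\Delta_p u$ must be a locally integrable function, which holds here because $\Delta_p u=-g(u)\in L^\infty_{\loc}$. Your weak-Harnack alternative is also valid, though the paper does not use it.
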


\begin{proof}
Proposition~\ref{prop:local-boundedness} gives $u\in L^\infty_{\loc}$, hence $g(u)\in L^\infty_{\loc}$.  Then Tolksdorf's interior gradient theorem \cite{T1984} applies to
$$
-\Delta_pu=g(u),\qquad g(u)\in L^\infty_{\loc},
$$
and gives $u\in C^{1,\gamma}_{\loc}$; see also the work of DiBenedetto \cite{D1983}. For positivity, V\'azquez's strong minimum principle \cite[Theorem~5]{V1984} applies with
$$
\Delta_pu=-g(u)\le0 \quad \text{ and }
\quad \Delta_pu\in L^\infty_{\loc} .
$$
Therefore, on each connected component, either $u\equiv0$ or $u>0$.
\end{proof}

The following lemma is standard. 
\begin{lem}
\label{lem:uniform-C1-compactness}
Let $B_{2R}(x_0)\subset\mathbb{R}^n$, and let
$$
u_j \in W^{1,p}\bigl(B_{2R}(x_0)\bigr)\cap L^\infty\bigl(B_{2R}(x_0)\bigr)
$$
be weak solutions of the boundary value problem
$$
-\Delta_p u_j = f_j \qquad \text{in } B_{2R}(x_0),
$$
where the source terms $f_j$ are not assumed to have a fixed sign (i.e., they may take both positive and negative values). Assume that
$$
\sup_j\|u_j\|_{L^\infty(B_{2R}(x_0))}\le M_0,
\qquad \sup_j\|f_j\|_{L^\infty(B_{2R}(x_0))}\le F_0.
$$
Then there exist $\beta=\beta(n,p)\in(0,1)$ and
$C=C(n,p,R,M_0,F_0)$ such that
$$
\|u_j\|_{C^{1,\beta}(B_R(x_0))}\le C \qquad\text{for every }j.
$$
Consequently, $\{u_j\}$ is precompact in $C^1(B_R(x_0))$.
Moreover, if $u_j\to u$ in $C^1_{\rm loc}$ and
$f_j\to f$ in $L^1_{\rm loc}$, then $u$ is a weak solution of
$-\Delta_pu=f$.
\end{lem}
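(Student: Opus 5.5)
The proof combines three standard ingredients: uniform $C^{1,\beta}$ interior estimates for the $p$-Laplacian with bounded right-hand side, Arzel\`a--Ascoli, and passage to the limit in the weak formulation. Since $\|f_j\|_{L^\infty}\le F_0$ and $\|u_j\|_{L^\infty}\le M_0$, the estimate of DiBenedetto \cite{D1983} and Tolksdorf \cite{T1984} applies to each $u_j$ on $B_{2R}(x_0)$. That estimate states: if $-\Delta_p v=f$ weakly in a ball $B_{2R}$ with $v\in W^{1,p}\cap L^\infty$ and $f\in L^\infty$, then $\|v\|_{C^{1,\beta}(B_R)}\le C(n,p,R,\|v\|_\infty,\|f\|_\infty)$ with $\beta=\beta(n,p)$. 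It requires no sign condition on $f$.

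To make the dependence of the constants explicit, I would first rescale $v(y)=u_j(x_0+Ry)$. This reduces the problem to the unit configuration $B_2\supset B_1$, where the right-hand side becomes $R^pf_j(x_0+Ry)$. The constant then visibly depends only on $n,p,M_0,F_0R^p$, and undoing the scaling gives dependence on $R$.

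Two points need checking before quoting the estimate.

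\begin{itemize}[leftmargin=*]
\item \emph{Finite energy on the big ball.} A uniform energy bound on a slightly smaller ball follows from Caccioppoli. Test the equation with $\eta^p(u_j-k)$ for a cutoff $\eta$ supported in $B_{2R}$, and use the $L^\infty$ bounds to get $\int_{B_{3R/2}}|\nabla u_j|^p\le C(n,p,R,M_0,F_0)$.
\item \emph{Exponent independent of $j$.} The H\"older exponent in the DiBenedetto--Tolksdorf estimate is $\beta(n,p)$ once one restricts to $C^{1,\beta}$ with $\beta$ small; the data enter only through the constant.
\end{itemize}

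This step is the only real input, and I expect it to be the main (though routine) obstacle, namely tracking that the constants depend only on the stated quantities.

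Precompactness in $C^1(\overline{B_R(x_0)})$ then follows from Arzel\`a--Ascoli. The family $\{u_j\}$ and $\{\nabla u_j\}$ are uniformly bounded and uniformly $\beta$-H\"older on $\overline{B_R(x_0)}$. Hence a subsequence converges in $C^{1}$, and indeed in $C^{1,\beta'}$ for every $\beta'<\beta$.

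Finally, assume $u_j\to u$ in $C^1_{\loc}$ and $f_j\to f$ in $L^1_{\loc}$, and fix $\varphi\in C_c^\infty$ in the domain. On $\operatorname{supp}\varphi$ we have $\nabla u_j\to\nabla u$ uniformly. The map $\xi\mapsto|\xi|^{p-2}\xi$ is continuous, and it is uniformly continuous on bounded sets; this holds also for $1<p<2$, since the map is H\"older continuous of order $p-1$. Therefore $|\nabla u_j|^{p-2}\nabla u_j\to|\nabla u|^{p-2}\nabla u$ uniformly on $\operatorname{supp}\varphi$, and $\int|\nabla u_j|^{p-2}\nabla u_j\cdot\nabla\varphi\to\int|\nabla u|^{p-2}\nabla u\cdot\nabla\varphi$.

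For the source term, $\left|\int(f_j-f)\varphi\right|\le\|\varphi\|_\infty\|f_j-f\|_{L^1(\operatorname{supp}\varphi)}\to0$. Passing to the limit in $\int|\nabla u_j|^{p-2}\nabla u_j\cdot\nabla\varphi=\int f_j\varphi$ shows that $u$ is a weak solution of $-\Delta_pu=f$. By density the identity extends to test functions in $W^{1,p}_0$ with compact support, using $u\in C^1$ and $f\in L^1_{\loc}$.
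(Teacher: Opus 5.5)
Your proposal is correct and follows the paper's proof essentially step for step. Both use the Tolksdorf/DiBenedetto interior $C^{1,\beta}$ estimate for bounded solutions with signed bounded right-hand side (with the Caccioppoli energy bound, exactly as in the paper's footnote), then Arzel\`a--Ascoli, then locally uniform convergence of $|\nabla u_j|^{p-2}\nabla u_j$ to pass to the limit in the weak formulation. One small quibble: your closing density extension to $W^{1,p}_0$ test functions needs $f\in L^\infty$, not merely $f\in L^1_{\rm loc}$. You do have this, since $|f|\le F_0$ a.e.\ as an $L^1_{\rm loc}$-limit of the $f_j$.
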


\begin{proof}
The uniform $C^{1,\beta}$-estimate is the interior gradient estimate for bounded solutions with bounded right-hand side; see Tolksdorf \cite[Section~2]{T1984}; the theorem applies to a signed right-hand side as well\footnote{If one uses the version stated in terms of a local energy norm, the required
uniform energy bound follows first from the Caccioppoli inequality and the two assumed $L^\infty$-bounds.}.  Now the desired compactness conclusion follows from Arzel\`a--Ascoli.  Finally,
$$
|\nabla u_j|^{p-2}\nabla u_j \to |\nabla u|^{p-2}\nabla u
$$
locally uniformly under $C^1_{\rm loc}$-convergence, hence passage to the weak formulation proves the last claim of the lemma.
\end{proof}

Now we conclude the following relative Harnack inequality together with a gradient estimate.
\begin{lem} 
\label{lem:relative-gradient}
Let $R>0$ and let $Z>0$ solve
$$
-\Delta_pZ=b(x)Z^{p-1} \quad\text{in }B_{2R}(y),
\qquad 0\le R^p b(x)\le N_0.
$$
Then
$$
\sup_{B_R(y)}Z\le C\inf_{B_R(y)}Z,
\qquad |\nabla Z(y)|\le \frac{C}{R}Z(y),
$$
where $C=C(n,p,N_0)$.
\end{lem}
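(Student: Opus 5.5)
The plan is to reduce to the unit scale by rescaling and then invoke the classical Serrin--Trudinger Harnack inequality together with interior gradient estimates. Set $Z_R(x):=Z(y+Rx)$ on $B_2$. Then
$$
-\Delta_p Z_R=\tilde b(x)Z_R^{p-1},\qquad \tilde b(x):=R^p b(y+Rx),
$$
and $0\le\tilde b\le N_0$. Both conclusions are invariant under this scaling, since $\nabla Z_R(0)=R\nabla Z(y)$. It therefore suffices to treat $R=1$ with a bounded potential $0\le b\le N_0$.

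\emph{Harnack step.} The equation $-\Delta_pZ=bZ^{p-1}$ has the structure covered by Serrin's and Trudinger's Harnack inequality for quasilinear equations with lower-order term $|B(x,z,\xi)|\le N_0|z|^{p-1}$. Concretely, $Z>0$ is a weak solution of
$$
\operatorname{div}\bigl(|\nabla Z|^{p-2}\nabla Z\bigr)+bZ^{p-1}=0
$$
with $b\in L^\infty$. Serrin's theorem (or Trudinger's version) then gives $\sup_{B_{3/2}}Z\le C\inf_{B_{3/2}}Z$ with $C=C(n,p,N_0)$. In particular this holds on $B_1$. Alternatively, one can argue with Moser iteration, testing with $\eta^pZ^{\beta}$ for $\beta\neq 1-p$: the zeroth-order term contributes $N_0\int\eta^pZ^{\beta+p-1}$, which is harmless because the constants depend only on $N_0$. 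The crossover step uses the John--Nirenberg lemma for $\log Z$, obtained by testing with $\eta^pZ^{1-p}$, and this works because $b$ is bounded.

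\emph{Gradient step.} Normalize $W:=Z_R/Z_R(0)$. By homogeneity of the equation in $Z$, $W$ solves the same equation, and by the Harnack step $C^{-1}\le W\le C$ on $B_{3/2}$. Hence
$$
f:=bW^{p-1}
$$
satisfies $\|f\|_{L^\infty(B_{3/2})}\le N_0C^{p-1}$, and $\|W\|_{L^\infty}\le C$. The interior $C^{1,\beta}$ estimate of Lemma~\ref{lem:uniform-C1-compactness} (Tolksdorf), applied on $B_{3/2}\supset B_{3/4}$ after a harmless change of radii, gives $|\nabla W(0)|\le C(n,p,N_0)$. Undoing the normalization and the scaling yields
$$
|\nabla Z(y)|\le \frac{C}{R}\,Z(y).
$$

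The main obstacle is ensuring that the Harnack constant depends only on $(n,p,N_0)$ and not on any other norm of $Z$. This is precisely why the zeroth-order term is kept linear-homogeneous as $bZ^{p-1}$: the standard theory then produces a homogeneous Harnack inequality with no additive term. Regularity for the test functions follows from Corollary~\ref{cor:C1-regularity}-type arguments, and positivity of $Z$ keeps the negative powers in the John--Nirenberg step legitimate on compact subsets.
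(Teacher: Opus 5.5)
Your proposal is correct and follows essentially the same route as the paper: you rescale to the unit ball, apply the Serrin--Trudinger Harnack inequality (the homogeneous zeroth-order term $bZ^{p-1}$ with $0\le R^pb\le N_0$ gives a constant depending only on $n,p,N_0$), and then use Tolksdorf's interior gradient estimate. The only difference is cosmetic: you normalize by $Z(y)$ before invoking Tolksdorf, whereas the paper writes the scaled estimate $|\nabla Z(y)|\le C[R^{-1}\sup_{B_R(y)}Z+R^{1/(p-1)}\|bZ^{p-1}\|_{L^\infty(B_R(y))}^{1/(p-1)}]$ and then absorbs the second term using $R^pb\le N_0$ and Harnack.
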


\begin{proof}
Set $\widetilde Z(\xi):=Z(y+R\xi)$. Then
$$
-\Delta_p\widetilde Z =R^pb(y+R\xi)\widetilde Z^{p-1} \quad\text{in }B_2.
$$
Now the unit-scale Harnack inequality gives the first estimate after scaling.

Moreover, Tolksdorf's interior gradient estimate \cite{T1984}, followed by scaling back, gives
$$
|\nabla Z(y)|\le C\left[ R^{-1}\sup_{B_R(y)}Z +R^{1/(p-1)}
\|bZ^{p-1}\|_{L^\infty(B_R(y))}^{1/(p-1)} \right].
$$
Since $R^p\|b\|_{L^\infty}\le N_0$, the second term is bounded by $CR^{-1}\sup_{B_R(y)}Z$. 
Thus, Harnack estimate together with
$\inf_{B_R(y)}Z\le Z(y)$ completes the proof.
\end{proof}

\section{Classification of profiles}

\subsection{Transformed equation}\label{sec:classification-start}

Corollary~\ref{cor:C1-regularity} in the earlier section gives
$V\in C^{1,\gamma}_{\loc}$ and $V>0$.  Since the map $s\mapsto s^{-1/a}$ is smooth on every compact
subset of $(0,\infty)$, we may define
\begin{equation}\label{eq:defn-w}
w:=V^{-1/a}\ge1, \qquad w\in C^{1,\gamma}_{\loc}(\R^n).
\end{equation}
For $t\ge1$, put
\begin{equation}\label{eq:c-tau}
 c_\tau(t):=
 \begin{cases}
a^{1-p}h(\tau t^{-a}),&0<\tau<\infty,\\
a^{1-p}h_0,&\tau=0,\\
a^{1-p}h_\infty,&\tau=\infty.
 \end{cases}
\end{equation}
The function $c_\tau$ is continuous, positive, bounded, and nondecreasing.
For abbreviation, set
\begin{equation}\label{eq:defn-Pw}
\Acal(\xi):=|\xi|^{p-2}\xi,
\qquad \Xbf_w:=\Acal(\nabla w),
\quad \text{ and } \quad  P_w:=\operatorname{div}\Xbf_w\in\mathcal D'(\R^n).
\end{equation}

\begin{lem}
The function $w$ satisfies
\begin{equation}\label{eq:transformed-equation}
wP_w=b_p|\nabla w|^p+c_\tau(w)
\end{equation}
in the weak sense in $\R^n$.
\end{lem}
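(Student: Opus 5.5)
The plan is to obtain \eqref{eq:transformed-equation} from the weak formulation of \eqref{eq:frozen-equation} by testing with $\varphi=w^{b_p+1}\psi$, where $\psi\in C_c^\infty(\R^n)$ is arbitrary, and then to identify the resulting terms. First I fix what the weak sense means. Since $w\in C^{1,\gamma}_{\loc}$ and $w\ge1$, the field $\Xbf_w$ is continuous, and the product $wP_w$ is the distribution
$$
\langle wP_w,\psi\rangle:=\langle P_w,w\psi\rangle=-\int_{\R^n}\Xbf_w\cdot\nabla(w\psi)\dd x
=-\int_{\R^n}w\,\Xbf_w\cdot\nabla\psi\dd x-\int_{\R^n}\psi|\nabla w|^p\dd x .
$$
This is meaningful because $w\psi$ is $C^1$ with compact support. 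The claim is therefore the identity
$$
-\int w\,\Xbf_w\cdot\nabla\psi\dd x=(b_p+1)\int\psi|\nabla w|^p\dd x+\int c_\tau(w)\psi\dd x
\qquad\text{for all }\psi\in C_c^\infty(\R^n).
$$

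Next I translate the equation. Since $V=w^{-a}$ and $V$ is $C^1$ and positive, the chain rule gives $\nabla V=-a\,w^{-a-1}\nabla w$. Hence
$$
\Acal(\nabla V)=-a^{p-1}w^{-(a+1)(p-1)}\Xbf_w=-a^{p-1}w^{-b_p}\Xbf_w,
$$
using $(a+1)(p-1)=\tfrac np(p-1)=b_p$. The test function $\varphi=w^{b_p+1}\psi$ is $C^1$ with compact support, so it is admissible in the weak formulation $\int\Acal(\nabla V)\cdot\nabla\varphi=\int g_\tau(V)\varphi$. Its gradient is
$$
\nabla\varphi=w^{b_p+1}\nabla\psi+(b_p+1)w^{b_p}\psi\nabla w .
$$
The left-hand side therefore becomes
$$
-a^{p-1}\Bigl[\int w\,\Xbf_w\cdot\nabla\psi+(b_p+1)\int\psi|\nabla w|^p\Bigr].
$$

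On the right-hand side I use $aq=b_p+1$ from \eqref{eq:exponent-identities}, so that $V^q w^{b_p+1}=w^{-aq+b_p+1}=1$. Consequently $g_\tau(V)\varphi$ reduces to $h(\tau w^{-a})\psi$ when $0<\tau<\infty$, and to $h_0\psi$ or $h_\infty\psi$ in the endpoint cases. In all three cases this equals $a^{p-1}c_\tau(w)\psi$ by \eqref{eq:c-tau}. Dividing by $-a^{p-1}$ gives exactly the displayed identity, which is \eqref{eq:transformed-equation}.

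There is no real obstacle here. The only points needing care are that the test function is admissible, which follows from the $C^1$ regularity and positivity of $V$ in Corollary~\ref{cor:C1-regularity}, and the bookkeeping of the exponents, namely the two identities $(a+1)(p-1)=b_p$ and $aq=b_p+1$.
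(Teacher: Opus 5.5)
Your proof is correct and follows essentially the same route as the paper: both test the weak equation for $V$ with a power of $w$ times an arbitrary test function, and both use only the identities $(a+1)(p-1)=b_p$ and $aq=b_p+1$. The only difference is a normalization. The paper tests with $a^{1-p}w^{b_p}\eta$ and obtains the divided form $P_w=(b_p|\nabla w|^p+c_\tau(w))/w$ in $\mathcal D'(\R^n)$, while your choice $w^{b_p+1}\psi$ gives the multiplied form $\langle P_w,w\psi\rangle=\int(b_p|\nabla w|^p+c_\tau(w))\psi\,dx$. The two forms are equivalent because $w\ge1$ is $C^1$ (substitute $\psi=\eta/w$).
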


\begin{proof}
Since $V=w^{-a}$ by \eqref{eq:defn-w}, the identities below follow
\begin{equation}\label{eq:defn-AV}
   \nabla V=-aw^{-a-1}\nabla w, \qquad \Acal(\nabla V)=-a^{p-1}w^{-b_p}\Xbf_w 
\end{equation}
where one used $(a+1)(p-1)=\frac{n(p-1)}{p}=b_p$.  Let $\eta\in C_c^\infty(\R^n)$.  

Since $w\in C^1_{\loc}(\R^n)$, $w\ge 1$, and
$\eta$ has compact support, one has
$$
\psi=a^{1-p}w^{b_p}\eta \in W^{1,p}_0(\operatorname{spt}\eta)\cap L^\infty.
$$
Moreover, $g_\tau(V)\in L^\infty_{\loc}$.
Thus the weak formulation, initially stated for smooth compactly supported test functions, extends to $\psi$ by density. Namely $\psi$ is an admissible test function in the equation for $V$.  

Note that, \eqref{eq:defn-AV} gives
$$
\int_{\R^n}\Acal(\nabla V)\cdot\nabla\psi\,dx
=-\int_{\R^n}\Xbf_w\cdot\nabla\eta\,dx -b_p\int_{\R^n}\frac{|\nabla w|^p}{w}\eta\,dx.
$$
On the other hand,  since  $aq=b_p+1$ by the second identity in \eqref{eq:exponent-identities},  the definition of
$c_\tau$ in \eqref{eq:c-tau} gives, for every $\tau\in[0,\infty]$,
$$
\int_{\R^n}g_\tau(V)\psi\,dx
= \int_{\R^n} a^{1-p}w^{b_p}g_\tau(w^{-a})\eta\,dx 
= \int_{\R^n} c_\tau(w)w^{b_p-aq}\eta\,dx
= \int_{\R^n}\frac{c_\tau(w)}w\eta\,dx.
$$
Thus, by testing the equation for $V$ via $\psi$, it follows from the two identities above that
$$
 -\int\Xbf_w\cdot\nabla\eta\,dx  =\int\frac{b_p|\nabla w|^p+c_\tau(w)}w\eta\,dx,
$$
which yields exactly \eqref{eq:transformed-equation}.
\end{proof}

Define the following functions
\begin{align}
 c_{\tau,0}&:=c_\tau(1), \notag\\
 \mathfrak k_\tau(t)&:=c_{\tau,0}
-\int_1^t\frac{c_\tau(s)}{s^2}\dd s,\label{eq:defect-functions}\\
 \mathfrak e_\tau(t)&:=\frac{c_\tau(t)}t-\mathfrak k_\tau(t),
 \qquad t\ge1. \notag
\end{align}
The function $\mathfrak k_\tau$ may change sign.
By contrast, the next lemma shows that
$\mathfrak e_\tau$ is nonnegative and nondecreasing.

\begin{lem}
The function $\mathfrak e_\tau$ is continuous, nonnegative, and nondecreasing.  In the sense of Stieltjes measures, i.e. the weak derivative of $BV$-functions,
\begin{equation}\label{eq:stieltjes-defect}
\dd\mathfrak e_\tau(t)=\frac1t\dd c_\tau(t).
\end{equation}
Equivalently, for every $1\le u<v$,
\begin{equation}\label{eq:int-ctau}
\mathfrak e_\tau(v)-\mathfrak e_\tau(u) =\int_{(u,v]}\frac 1 t \dd c_\tau(t)\ge 0.
\end{equation}
\end{lem}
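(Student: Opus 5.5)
The plan is to differentiate $\mathfrak e_\tau$ using the product rule for BV functions. Since $c_\tau$ is continuous and monotone, it has BV regularity. We then read off both the formula \eqref{eq:stieltjes-defect} and the sign from the monotonicity of $c_\tau$.

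First, the regularity. By \eqref{eq:c-tau}, $c_\tau$ is continuous, bounded, positive and nondecreasing on $[1,\infty)$: $h$ is nonincreasing and $t\mapsto\tau t^{-a}$ is decreasing. In the cases $\tau\in\{0,\infty\}$, $c_\tau$ is constant. Hence $c_\tau$ is locally of bounded variation and its Stieltjes measure $\dd c_\tau$ is a nonnegative Radon measure without atoms. The function $\mathfrak k_\tau$ is $C^1$ on $[1,\infty)$ with $\mathfrak k_\tau'(t)=-c_\tau(t)/t^2$, since the integrand is continuous. Therefore $\mathfrak e_\tau=c_\tau/t-\mathfrak k_\tau$ is continuous.

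Next, the measure identity. Since $c_\tau$ is continuous and $1/t$ is smooth, the Stieltjes product rule applies without atom corrections and gives
\[
\dd\Bigl(\frac{c_\tau(t)}{t}\Bigr)=\frac1t\dd c_\tau(t)-\frac{c_\tau(t)}{t^2}\dd t .
\]
One can justify this by integration by parts on $(u,v]$ against $1/t$, or by mollifying $c_\tau$ and passing to the limit. Subtracting $\dd\mathfrak k_\tau=-c_\tau(t)t^{-2}\dd t$, the absolutely continuous terms cancel and we obtain \eqref{eq:stieltjes-defect}. Integrating over $(u,v]$ yields \eqref{eq:int-ctau}. The integral is nonnegative because $\dd c_\tau\ge0$ and $1/t>0$, so $\mathfrak e_\tau$ is nondecreasing.

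Finally, for nonnegativity note that $\mathfrak e_\tau(1)=c_\tau(1)-\mathfrak k_\tau(1)=c_{\tau,0}-c_{\tau,0}=0$. Monotonicity then gives $\mathfrak e_\tau\ge0$ on $[1,\infty)$.

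The only genuinely delicate point is the product rule for a merely continuous monotone $c_\tau$, which may have a Cantor part. I will handle it directly through the Riemann--Stieltjes integration by parts formula
\[
\int_{(u,v]}\frac1t\dd c_\tau=\frac{c_\tau(v)}v-\frac{c_\tau(u)}u+\int_u^v\frac{c_\tau(t)}{t^2}\dd t .
\]
This formula is valid since one factor is continuous and of bounded variation and the other is $C^1$. It immediately gives \eqref{eq:int-ctau}.
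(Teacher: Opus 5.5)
Your proposal is correct and is essentially the paper's argument: Riemann--Stieltjes integration by parts of $\frac1t$ against $\dd c_\tau$ on $(u,v]$, cancellation of the absolutely continuous term $\int_u^v c_\tau(t)t^{-2}\dd t$ with the increment of $\mathfrak k_\tau$, and $\mathfrak e_\tau(1)=0$ for nonnegativity. Your write-up separates the two sign statements slightly more cleanly than the paper: you get $\int_{(u,v]}\frac1t\dd c_\tau\ge0$ from $\dd c_\tau\ge0$, and then $\mathfrak e_\tau\ge0$ from monotonicity together with $\mathfrak e_\tau(1)=0$.
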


\begin{proof}
Since $c_\tau$ is continuous and nondecreasing, it is locally of bounded variation. Then Riemann-Stieltjes integration by parts gives
$$
 \frac{c_\tau(v)}v-\frac{c_\tau(u)}u
 =\int_{(u,v]}\frac1t\dd c_\tau(t)  -\int_u^v\frac{c_\tau(t)}{t^2}\dd t.
$$
Adding $\mathfrak k_\tau(u)-\mathfrak k_\tau(v)$ to the identity above proves the identity  \eqref{eq:int-ctau}, and hence \eqref{eq:stieltjes-defect}.  Moreover, since $\mathfrak e_\tau(1)=0$, the nonnegativity in \eqref{eq:int-ctau} follows.
\end{proof}

\begin{rem}\label{rem:stieltjes-splitting}
The identity
$$
\dd\left(\frac{c_\tau(t)}{t}\right)
=-\frac{c_\tau(t)}{t^2}\,\dd t+\frac1t\,\dd c_\tau(t)
$$
splits the derivative into an absolutely continuous part and a possibly singular part, since a continuous monotone function may have a Cantor component. Thus the decomposition
$$
\frac{c_\tau(t)}t=\mathfrak k_\tau(t)+\mathfrak e_\tau(t)
$$
isolates these two contributions. Here
$Q_w:=P_w-\mathfrak e_\tau(w)$, defined below, has the same
differentiable first-order structure as the transformed source in the pure-power problem, whereas the nonsmooth part survives as a favorable nonnegative measure in the weak Bochner identity introduced later.

In addition, note that when $c_\tau\equiv c_{\tau, 0}$, one has
$$\mathfrak k_\tau(t)= \frac{c_{\tau, 0}} t, \qquad \mathfrak e_\tau\equiv 0, \qquad Q_w=P_w.$$
This is the reduction to the transformed pure-power framework in \cite{O2025}.
\end{rem}

Recall the definition of $P_w$ in \eqref{eq:defn-Pw} and define
\begin{equation}\label{eq:defn-Qw}
Q_w:=P_w-\mathfrak e_\tau(w).
\end{equation}
By \eqref{eq:transformed-equation} and \eqref{eq:defect-functions},
\begin{equation}\label{eq:Q-algebraic}
Q_w=\mathfrak k_\tau(w)+\frac{b_p}{w}|\nabla w|^p.
\end{equation}
Let
$$
\Omega_{\mathrm{reg}}:=\{x\in\R^n:\nabla w(x)\ne0\},
$$
which is open by the $C^{1,\gamma}$-regularity of $w$. 

\begin{lem} \label{lem:Q-derivative}
On $\Omega_{\mathrm{reg}}$, $Q_w\in W^{1,2}_{\loc}$ and
\begin{equation}\label{eq:Q-derivative}
w\nabla Q_w =b_p\nabla(|\nabla w|^p)-P_w\nabla w
\end{equation}
almost everywhere.
\end{lem}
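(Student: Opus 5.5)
The plan is a direct computation from the algebraic expression \eqref{eq:Q-algebraic}, once enough second-order regularity of $w$ on $\Omega_{\mathrm{reg}}$ is available. Regularity is the only real issue, so I handle it first.

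\emph{Step 1: regularity on $\Omega_{\mathrm{reg}}$.} Fix a ball $B\Subset\Omega_{\mathrm{reg}}$. By continuity of $\nabla w$ there are constants $0<\delta\le|\nabla w|\le \delta^{-1}$ on a neighbourhood of $\overline B$. By \eqref{eq:transformed-equation}, on this neighbourhood $w$ solves
$$
\operatorname{div}\Acal(\nabla w)=f:=\frac{b_p|\nabla w|^p+c_\tau(w)}{w},
$$
with $f\in L^\infty$, since $w\ge1$ and $c_\tau$ is bounded. Because $\nabla w$ stays in the compact set $\{\delta\le|\xi|\le\delta^{-1}\}$, the field $\Acal$ is smooth and uniformly monotone there: $D\Acal(\xi)$ has eigenvalues between $\min\{1,p-1\}|\xi|^{p-2}$ and $\max\{1,p-1\}|\xi|^{p-2}$. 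The standard difference-quotient argument then gives $w\in W^{2,2}_{\loc}$ near $\overline B$. Concretely, test the equation with $-\Delta_{-h}(\eta^2\Delta_h w)$, use that $\Acal(\nabla w(x+he_k))-\Acal(\nabla w(x))$ equals an averaged uniformly elliptic matrix applied to $\Delta_h\nabla w$, and absorb the $f$-term by Young's inequality. The point to check is that the segment between $\nabla w(x)$ and $\nabla w(x+he_k)$ stays away from $0$ for small $h$; this holds by uniform continuity of $\nabla w$.

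Consequently $P_w=f$ as an $L^\infty$ function on $B$. Moreover $\Phi(\xi):=|\xi|^p$ is smooth on $\{\delta\le|\xi|\le\delta^{-1}\}$, so the chain rule for $W^{1,2}\cap C^0$ gives $|\nabla w|^p\in W^{1,2}(B)$ with $\nabla(|\nabla w|^p)=p|\nabla w|^{p-2}D^2w\,\nabla w$ a.e.

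\emph{Step 2: the chain rule for $\mathfrak k_\tau(w)$.} By \eqref{eq:defect-functions}, $\mathfrak k_\tau$ is $C^1$ on $[1,\infty)$ with $\mathfrak k_\tau'(t)=-c_\tau(t)/t^2$, because $c_\tau$ is continuous. Hence $\mathfrak k_\tau(w)\in C^1$ and $\nabla\mathfrak k_\tau(w)=-c_\tau(w)w^{-2}\nabla w$. The possibly singular part of $dc_\tau$ never enters here: it has been isolated into $\mathfrak e_\tau$, and that is exactly the purpose of the splitting in Remark~\ref{rem:stieltjes-splitting}. Combined with Step~1 and $w\ge1$, $w\in C^1$, the product and quotient rules give $Q_w=\mathfrak k_\tau(w)+b_p|\nabla w|^p/w\in W^{1,2}_{\loc}(\Omega_{\mathrm{reg}})$.

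\emph{Step 3: the identity.} Differentiating \eqref{eq:Q-algebraic} a.e. gives
$$
\nabla Q_w=-\frac{c_\tau(w)}{w^2}\nabla w+\frac{b_p}{w}\nabla(|\nabla w|^p)-\frac{b_p|\nabla w|^p}{w^2}\nabla w .
$$
Multiplying by $w$ yields
$$
w\nabla Q_w=b_p\nabla(|\nabla w|^p)-\frac{b_p|\nabla w|^p+c_\tau(w)}{w}\nabla w .
$$
The last fraction equals $P_w$ a.e. by \eqref{eq:transformed-equation} and Step~1, which is \eqref{eq:Q-derivative}.

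The main obstacle is Step~1: justifying $W^{2,2}_{\loc}$ regularity, and hence that $P_w$ is a genuine function, using only continuity (not Hölder continuity) of the source. Away from critical points the difference-quotient method needs only $f\in L^\infty$ together with the nondegeneracy of $D\Acal$, so I expect it to go through. If needed, one can alternatively invoke the known result that $\Acal(\nabla w)\in W^{1,2}_{\loc}$ for $p$-Laplace type equations with bounded right-hand side, and invert $\Acal$ smoothly on $\{|\xi|\ge\delta\}$.
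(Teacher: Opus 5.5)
Your argument is correct. Steps~2--3 are exactly the paper's computation: $\mathfrak k_\tau\in C^1$ with $\mathfrak k_\tau'=-c_\tau/t^2$, differentiate \eqref{eq:Q-algebraic} a.e., and identify the numerator as $wP_w$ via \eqref{eq:transformed-equation}.

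The only difference is in Step~1. The paper does not use difference quotients. It cites Antonini--Ciraolo--Farina to get $\Xbf_w\in W^{1,2}_{\loc}(\R^n)$ on the whole space, which is the fallback you mention. It then writes $\nabla w=\Acal^{-1}(\Xbf_w)$ on $K\subset\subset\Omega_{\mathrm{reg}}$, where $\Acal^{-1}$ is Lipschitz on the annulus containing $\Xbf_w(K)$, and so obtains $w\in W^{2,2}(K)$.

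Your direct route is self-contained and valid:
\begin{itemize}
\item on the regular set the averaged matrix $\int_0^1 D\Acal(\cdot)\,dt$ is uniformly elliptic, and $f\in L^\infty$ is all the difference-quotient estimate needs;
\item the test function $-\Delta_{-h}(\eta^2\Delta_h w)$ is $C^1_c$ because $w\in C^1$;
\item uniform continuity of $\nabla w$ keeps the segment between $\nabla w(x)$ and $\nabla w(x+he_k)$ away from the origin, which is exactly the check that has to be made.
\end{itemize}

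What the cited theorem provides beyond this lemma is a global statement across the critical set. The paper reuses it later: in Proposition~\ref{prop:global-rigidity} it upgrades the a.e.\ identity \eqref{eq:DX-isotropic} to a distributional one on all of $\R^n$. Your local argument would not give that, but the present lemma is local on $\Omega_{\mathrm{reg}}$ and does not need it.
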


\begin{proof}
The transformed equation may be written in the sign convention of Antonini--Ciraolo--Farina \cite{ACF2023} as
$$
-\operatorname{div}\Xbf_w = -P_w,
\qquad \Xbf_w = |\nabla w|^{p-2}\nabla w,
\qquad P_w = \frac{b_p|\nabla w|^p+c_\tau(w)}{w}.
$$
On each compact set $K$, $w$ is bounded above and below away from zero, $\nabla w$ is bounded, and $c_\tau(w)$ is bounded.  Hence
$$
P_w\in L^\infty_{\loc}(\R^n).
$$
Thus the hypotheses of Antonini--Ciraolo--Farina
\cite[Theorem~1.1]{ACF2023} are satisfied with the notation there
$$
H(\xi)=|\xi|, \qquad 
a(\xi)=\frac 1 p \nabla_\xi H(\xi)^p =|\xi|^{p-2}\xi,
\qquad f=-P_w\in L^\infty_{\loc}.
$$
Consequently,
$$
    \Xbf_w\in W^{1,2}_{\loc}(\R^n;\R^n).
$$
Let $K\subset\subset\Omega_{\mathrm{reg}}$.  Since
$\nabla w$ is continuous and nonvanishing on $K$, there are
constants $0<\delta_K\le M_K<\infty$ such that
$$
    \delta_K\le|\nabla w|\le M_K \qquad\text{on }K.
$$
Then the inverse of the stress map
$$
    \Acal^{-1}(\zeta)=|\zeta|^{p'-2}\zeta
$$
is Lipschitz on the compact annulus containing
$\Xbf_w(K)$.  Thus, the Sobolev chain rule  gives
$$
    \nabla w=\Acal^{-1}(\Xbf_w)\in W^{1,2}(K),
$$
and hence $w\in W^{2,2}(K)$.  Formula \eqref{eq:Q-algebraic} then implies $Q_w\in W^{1,2}(K)$.

Observe that \eqref{eq:defect-functions} shows that
$\mathfrak k_\tau\in C^1$ and
$\mathfrak k_\tau'(t)=-c_\tau(t)/t^2$.  Differentiating
\eqref{eq:Q-algebraic} almost everywhere on the regular set gives
\begin{align*}
 \nabla Q_w &= -\frac{c_\tau(w)}{w^2}\nabla w + \frac{b_p}{w}\nabla(|\nabla w|^p) -  \frac{b_p}{w^2} |\nabla w|^p \nabla w \\
 & = \frac{b_p}{w}\nabla(|\nabla w|^p)
  -\frac{c_\tau(w)+b_p|\nabla w|^p}{w^2}\nabla w.    
\end{align*}
Then the transformed equation \eqref{eq:transformed-equation} identifies the numerator in the last term with $wP_w$, proving \eqref{eq:Q-derivative}.
\end{proof}

\subsection{The weak Bochner identity}

On the regular set $\Omega_{\mathrm{reg}}$, define
\begin{equation}\label{eq:defn-Ew}
\mathsf A_w :=D_\xi\Acal(\nabla w),
\qquad \mathsf E_w:=D_x\Xbf_w-\frac{P_w}{n}\Id.
\end{equation}
Thus, if $\mathbf e=\nabla w/|\nabla w|$, then
$$
\mathsf A_w =|\nabla w|^{p-2}
\bigl(\Id+(p-2)\mathbf e\otimes\mathbf e\bigr).
$$
In particular, $\mathsf A_w$ is symmetric and positive definite on
$\Omega_{\mathrm{reg}}$.

We use the convention
$$
(D_x\Xbf_w)_{ij}=\partial_j(\Xbf_w)_i.
$$
Writing $H=D^2w$, we have
\begin{equation}\label{eq:DXw-AH}
D_x\Xbf_w=\mathsf A_w H.
\end{equation}
Moreover,
$$
\operatorname{tr}(D_x\Xbf_w) =\operatorname{div}\Xbf_w=P_w.
$$
Together with
$$
p b_p=n(p-1),\qquad \nabla(|\nabla w|^p)=pH\Xbf_w,\qquad
\mathsf A_w\nabla w=(p-1)\Xbf_w,
$$
these identities and \eqref{eq:Q-derivative} yield the following first-order tensor identity:
\begin{align}
n\mathsf E_w^{\,T}\nabla w = & {} n H \mathsf A_w \nabla w-P_w\nabla w =n(p-1)H\Xbf_w -P_w\nabla w \notag\\
= & {} b_p \nabla(|\nabla w|^p)-P_w\nabla w  ={w}\nabla Q_w \label{eq:Ou-form}
\end{align}
almost everywhere on $\Omega_{\mathrm{reg}}$.
This result is the analogue of \cite[Lemma 2.1 (i)]{O2025} for the transformed purely critical equation. It is noteworthy that Ou utilized the same transformation and the same stress field, but in combination with a trace-free defect tensor which is, in general, not symmetric.

\begin{lem} \label{lem:stress-relation}
Almost everywhere on $\Omega_{\mathrm{reg}}$,
\begin{equation}\label{eq:stress-relation}
\mathsf E_w\Xbf_w =\frac{w}{n(p-1)}\mathsf A_w\nabla Q_w.
\end{equation}
Moreover,
\begin{equation}\label{eq:defn-Tw}
\mathsf T_w:=\mathsf A_w^{-1/2} \mathsf E_w\mathsf A_w^{1/2}
\end{equation}
is symmetric and trace-free.  Hence
\begin{equation}\label{eq:E-square-positive}
\tr(\mathsf E_w^2)=|\mathsf T_w|^2\ge0.
\end{equation}
\end{lem}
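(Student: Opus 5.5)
Both claims are pointwise linear algebra on $\Omega_{\mathrm{reg}}$. Throughout we use $D_x\Xbf_w=\mathsf A_wH$ from \eqref{eq:DXw-AH}, the symmetry of $H=D^2w$ (available a.e.\ since $w\in W^{2,2}_{\loc}(\Omega_{\mathrm{reg}})$ by the proof of Lemma~\ref{lem:Q-derivative}), and the symmetry and positive definiteness of $\mathsf A_w$.

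\emph{The stress relation \eqref{eq:stress-relation}.} Transposing the first-order identity \eqref{eq:Ou-form} gives
$$
n\mathsf E_w^T\nabla w=w\nabla Q_w .
$$
We want to move from $\mathsf E_w^T$ acting on $\nabla w$ to $\mathsf E_w$ acting on $\Xbf_w$. Since $\mathsf E_w=\mathsf A_wH-\frac{P_w}{n}\Id$ and both $\mathsf A_w$ and $H$ are symmetric,
$$
\mathsf E_w^T=H\mathsf A_w-\tfrac{P_w}{n}\Id ,
\qquad\text{hence}\qquad
\mathsf A_w\mathsf E_w^T=\mathsf A_wH\mathsf A_w-\tfrac{P_w}{n}\mathsf A_w=\mathsf E_w\mathsf A_w .
$$
Applying this identity to $\nabla w$ and using $\mathsf A_w\nabla w=(p-1)\Xbf_w$, we get
$$
(p-1)\mathsf E_w\Xbf_w=\mathsf E_w\mathsf A_w\nabla w=\mathsf A_w\mathsf E_w^T\nabla w=\frac{w}{n}\mathsf A_w\nabla Q_w .
$$
Dividing by $p-1$ yields \eqref{eq:stress-relation}.

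\emph{Symmetry and trace of $\mathsf T_w$.} Let $\mathsf A_w^{1/2}$ be the symmetric positive square root. Then
$$
\mathsf T_w=\mathsf A_w^{-1/2}\bigl(\mathsf A_wH-\tfrac{P_w}{n}\Id\bigr)\mathsf A_w^{1/2}
=\mathsf A_w^{1/2}H\mathsf A_w^{1/2}-\tfrac{P_w}{n}\Id ,
$$
and this is manifestly symmetric. It is a similarity transform of $\mathsf E_w$, so $\tr\mathsf T_w=\tr\mathsf E_w=\tr(\mathsf A_wH)-P_w=0$, because $\tr(D_x\Xbf_w)=P_w$.

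\emph{The inequality \eqref{eq:E-square-positive}.} Squaring the similarity gives $\mathsf T_w^2=\mathsf A_w^{-1/2}\mathsf E_w^2\mathsf A_w^{1/2}$, so
$$
\tr(\mathsf E_w^2)=\tr(\mathsf T_w^2)=\tr(\mathsf T_w^T\mathsf T_w)=|\mathsf T_w|^2\ge0 ,
$$
where the middle equality uses the symmetry of $\mathsf T_w$.

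\emph{The delicate point} is justifying these manipulations almost everywhere. The matrix $\mathsf A_w$ is continuous and uniformly elliptic on compact subsets of $\Omega_{\mathrm{reg}}$, so its square root and inverse are continuous there. The symmetry of $H$ holds a.e.\ because $w\in W^{2,2}$ there, and $\nabla Q_w$ exists by Lemma~\ref{lem:Q-derivative}. With these in hand, every identity above is pointwise a.e.
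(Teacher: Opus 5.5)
Your proposal is correct and follows the paper's own proof essentially verbatim: you apply the commutation identity $\mathsf A_w\mathsf E_w^{T}=\mathsf E_w\mathsf A_w$ to \eqref{eq:Ou-form} and use $\mathsf A_w\nabla w=(p-1)\Xbf_w$, then you observe that $\mathsf T_w=\mathsf A_w^{1/2}H\mathsf A_w^{1/2}-\frac{P_w}{n}\Id$ is symmetric and similar to $\mathsf E_w$. One cosmetic remark: \eqref{eq:Ou-form} is already stated in the form $n\mathsf E_w^{T}\nabla w=w\nabla Q_w$, so no transposition is needed.
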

\begin{proof}
We apply  \eqref{eq:DXw-AH} again to get
$$
\frac{w}{n(p-1)}\mathsf A_w\nabla Q_w
= \frac1{p-1} \mathsf A_w\mathsf E_w^{\,T}\nabla w
= \frac1{p-1} \mathsf E_w\mathsf A_w\nabla w
= \mathsf E_w\Xbf_w,
$$
where we applied \eqref{eq:DXw-AH} to get
$$
\mathsf E_w\mathsf A_w=\mathsf A_wH\mathsf A_w - \frac{P_w} n \mathsf A_w=\mathsf A_w\mathsf E_w^{\,T}.
$$
This is exactly \eqref{eq:stress-relation}. 

In addition, by \eqref{eq:defn-Ew} and \eqref{eq:DXw-AH}
$$
 \mathsf T_w  =\mathsf A_w^{1/2}H\mathsf A_w^{1/2}-\frac{P_w}{n}\Id,
$$
which is symmetric (while $\mathsf E_w$ is generally not).  As similarity preserves trace, we have $\tr\mathsf T_w=\tr\mathsf E_w=0$, and
$\tr(\mathsf E_w^2)=\tr(\mathsf T_w^2)=|\mathsf T_w|^2$.
\end{proof}

Observe that when $p=2$, $\mathsf A_w=\Id$ and $\mathsf T_w=\mathsf E_w$. 
Furthermore, note that the monotone defect can have a singular Cantor derivative; compare with the background chain-rule theory in \cite{AD1990}.  Therefore in the argument below we use the monotone smoothing and the coarea formula from \cite{EG2015}.

\begin{lem}\label{lem:BV-composition}
If $U'\subset\subset U\subset\subset\Omega_{\mathrm{reg}}$, then
\begin{equation}\label{eq:e-composition-BV}
\mathfrak e_\tau(w)\in BV(U')
\end{equation}
and, as Radon measures on $U'$,
\begin{equation}\label{eq:positive-pairing}
\Xbf_w\cdot D(\mathfrak e_\tau(w))\ge0.
\end{equation}
\end{lem}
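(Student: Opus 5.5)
The plan is to approximate $\mathfrak e_\tau$ by smooth nondecreasing functions, establish both statements for the approximants, and pass to the limit. On $U\subset\subset\Omega_{\mathrm{reg}}$ we have $w\in W^{2,2}(U)\cap C^{1,\gamma}$ by the proof of Lemma~\ref{lem:Q-derivative}, with $\delta_U\le|\nabla w|\le M_U$ and $1\le w\le T_U$ on $\overline U$. Since $\mathfrak e_\tau$ is continuous and nondecreasing on $[1,T_U]$, we mollify it: extend it constantly to the left of $1$ and set $\mathfrak e^\varepsilon:=\mathfrak e_\tau*\rho_\varepsilon$. Each $\mathfrak e^\varepsilon$ is smooth and nondecreasing, $\mathfrak e^\varepsilon\to\mathfrak e_\tau$ uniformly on $[1,T_U]$, and
$$\int_{[1,T_U]}(\mathfrak e^\varepsilon)'\,dt\le \mathfrak e_\tau(T_U+1)-\mathfrak e_\tau(0)<\infty .$$
For these approximants the classical chain rule gives $D(\mathfrak e^\varepsilon(w))=(\mathfrak e^\varepsilon)'(w)\nabla w$. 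Hence
$$\Xbf_w\cdot D(\mathfrak e^\varepsilon(w))=(\mathfrak e^\varepsilon)'(w)|\nabla w|^p\ge0 .$$

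For the uniform BV bound (\eqref{eq:e-composition-BV}), we use the coarea formula of \cite{EG2015} for the Lipschitz map $w$ on $U'$, whose gradient satisfies $|\nabla w|\ge\delta_U>0$:
$$\int_{U'}(\mathfrak e^\varepsilon)'(w)|\nabla w|\,dx=\int_1^{T_U}(\mathfrak e^\varepsilon)'(t)\,\mathcal H^{n-1}(U'\cap\{w=t\})\,dt .$$
The level-set measure $\mathcal H^{n-1}(U'\cap\{w=t\})$ must be bounded uniformly in $t$. We expect this step to be the main obstacle, and we plan to obtain the bound from the divergence theorem applied to $\nabla w/|\nabla w|$. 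Since $w\in W^{2,2}(U)$, this vector field has divergence in $L^1$ with bound of order $\|D^2w\|_{L^1}/\delta_U$, so
$$\mathcal H^{n-1}(\{w=t\}\cap U')\le C\bigl(\|D^2w\|_{L^1(U)}/\delta_U+\mathcal H^{n-1}\text{-type boundary terms}\bigr).$$
To control the boundary terms, we first choose $U'$ to be a smooth compactly contained set (for instance a finite union of balls) and test with a cutoff function. Alternatively, we can avoid the issue by covering $U'$ with small boxes in which $w$ is a $C^1$ graph coordinate via the implicit function theorem; there the level sets are graphs with area bounded by $C(M_U/\delta_U)$. The box approach is simpler and is the one we would use first.

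With the uniform bound in hand, $\sup_\varepsilon|D(\mathfrak e^\varepsilon(w))|(U')<\infty$, and $\mathfrak e^\varepsilon(w)\to\mathfrak e_\tau(w)$ uniformly. Lower semicontinuity of total variation then gives $\mathfrak e_\tau(w)\in BV(U')$, and the convergence $D(\mathfrak e^\varepsilon(w))\weakto D(\mathfrak e_\tau(w))$ holds weakly-$*$ as measures.

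For \eqref{eq:positive-pairing}, the vector field $\Xbf_w$ is continuous on $\overline{U'}$, so for a nonnegative test function $\varphi\in C_c(U')$ we have
$$\int\varphi\,\Xbf_w\cdot dD(\mathfrak e_\tau(w))=\lim_{\varepsilon\to0}\int\varphi(\mathfrak e^\varepsilon)'(w)|\nabla w|^p\,dx\ge0 .$$
The limit is justified because the pairing of $\varphi\Xbf_w\in C_c$ with a weakly-$*$ convergent sequence of measures passes to the limit. A nonnegative functional on $C_c(U')$ is a nonnegative Radon measure, which concludes the proof.
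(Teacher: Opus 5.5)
Your proposal is correct and follows essentially the same route as the paper. Both arguments use monotone mollification of $\mathfrak e_\tau$ and the coarea formula, with a uniform bound on $\mathcal H^{n-1}(U'\cap\{w=t\})$ obtained from finitely many graph (submersion) charts in which $w$ serves as a coordinate. Both then use BV compactness and pass the nonnegative densities $(\mathfrak e^\varepsilon)'(w)|\nabla w|^p$ to the limit against the continuous field $\varphi\Xbf_w$.

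The divergence-theorem alternative you sketch for the level-set bound is not needed once you use the chart argument. Your remark that the whole sequence converges weakly-$*$, rather than only a subsequence, is a harmless refinement of the paper's version.
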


\begin{proof}
As $U\subset\subset\Omega_{\mathrm{reg}}$, there are constants $0<c_*\le|\nabla w|\le C_*$ on $\overline U$.  The range of $w$ on $\overline U$ is a compact interval $I=[m,M]\subset[1,\infty)$.  Extend $\mathfrak e_\tau|_I$ monotonically to a slightly larger interval and convolve with one-dimensional nonnegative mollifiers.  This gives smooth nondecreasing $e_k$ such that
$$
e_k\to\mathfrak e_\tau\quad\text{uniformly on }I,
\qquad
\int_Ie_k'(t)\dd t
\le \mathfrak e_\tau(M)-\mathfrak e_\tau(m)+1.
$$

A finite collection of $C^1$ submersion charts covers
$\overline{U'}$. We employ $w$ as one of the coordinate functions in each chart, since $\nabla w$ does not vanish on $\overline{U}$, which implies that $w$ is a $C^{1}$ submersion. Consequently, the level sets of $w$ can be represented as graphs whose associated area elements are uniformly bounded. Thus 
$$
\sup_{t\in I} \mathcal H^{n-1}(U'\cap\{w=t\})<\infty,
$$
and the coarea formula gives
\begin{align*}
 |D(e_k(w))|(U')
 &=\int_{U'}e_k'(w)|\nabla w|\dd x\\
 &=\int_Ie_k'(t) \mathcal H^{n-1}(U'\cap\{w=t\})\dd t\le C.
\end{align*}
Since the uniform convergence of $e_k \to \mathfrak e_\tau$ gives $e_k(w)\to\mathfrak e_\tau(w)$ in $L^1(U')$, the  $BV$ compactness proves \eqref{eq:e-composition-BV} and, after passing to a subsequence,
$$
D(e_k(w))\weakto D(\mathfrak e_\tau(w)) \quad\text{weakly as vector measures}.
$$
For each $k$,
$$
 \Xbf_w\cdot D(e_k(w))  =e_k'(w)\Xbf_w\cdot\nabla w\dd x
 =e_k'(w)|\nabla w|^p\dd x\ge 0.
$$
Now for every nonnegative $\varphi\in C_c(U')$,
\begin{align*}
 \int_{U'}\varphi\,\Xbf_w\cdot
 D(\mathfrak e_\tau(w))
 &=\lim_{k\to\infty}
 \int_{U'}\varphi\,\Xbf_w\cdot D(e_k(w))\\
 &=\lim_{k\to\infty}
 \int_{U'}\varphi e_k'(w)|\nabla w|^p\,dx
 \ge 0.
\end{align*}
Here we used the fact that $\varphi \Xbf_w$ is continuous, that the measures $D(e_k(w))$ have uniformly bounded total variation, and that they converge weakly. This proves \eqref{eq:positive-pairing}.
\end{proof}

Define, on the regular set,
\begin{equation}\label{eq:defn-Lw}
\Lscr_w\zeta :=\operatorname{div}\bigl(w^{2-n}\mathsf A_w\nabla\zeta\bigr).
\end{equation}
The weight $w^{2-n}$ is chosen in the spirit of Ma and Wu
\cite{MW2024}; see the proof of Proposition~\ref{prop:bochner} below.

Recall the convention: For a vector field $X$ and a (not necessarily symmetric) matrix $E=DX-\frac {P} n \Id$, one has
$(DX)_{ij}=\partial_jX_i$ so that
$$E_{ij} \partial_iX_j = \tr(E \, DX).$$
Now we prove the following weak Bochner identity. 

\begin{prop}\label{prop:bochner}
On $\Omega_{\mathrm{reg}}$, in the sense of Radon measures,
\begin{equation}\label{eq:bochner}
 \Lscr_wQ_w  = n(p-1)w^{1-n}\tr(\mathsf E_w^2)\dd x +(n-1)(p-1)w^{1-n} \Xbf_w\cdot D(\mathfrak e_\tau(w)).
\end{equation}
In particular, $\Lscr_w Q_w$ is a nonnegative Radon measure.
\end{prop}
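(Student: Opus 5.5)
The plan is to compute $\Lscr_wQ_w$ through the flux identity of Lemma~\ref{lem:stress-relation}. By \eqref{eq:stress-relation},
$$
w^{2-n}\mathsf A_w\nabla Q_w=n(p-1)\,w^{1-n}\,\mathsf E_w\Xbf_w
\qquad\text{a.e. on }\Omega_{\mathrm{reg}}.
$$
So the task reduces to computing $\operatorname{div}\bigl(w^{1-n}\mathsf E_w\Xbf_w\bigr)$ in the sense of distributions. On $\Omega_{\mathrm{reg}}$ we know $\Xbf_w\in W^{1,2}_{\loc}$ and $w\in W^{2,2}_{\loc}$ (proof of Lemma~\ref{lem:Q-derivative}). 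We also know $P_w\in L^\infty_{\loc}$, $Q_w\in W^{1,2}_{\loc}$, and $\mathfrak e_\tau(w)\in BV_{\loc}$ by Lemma~\ref{lem:BV-composition}. Hence the vector field $w^{1-n}\mathsf E_w\Xbf_w$ lies in $L^2_{\loc}$, and the left-hand side of \eqref{eq:bochner} is a well-defined distribution.

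First I would carry out the formal computation, with $X=\Xbf_w$, $E=\mathsf E_w$, $P=P_w$. Using $(E X)_i=E_{ij}X_j$, there are three contributions.
\begin{itemize}
\item Derivative falling on $X$: this gives $E_{ij}\partial_iX_j=\tr(E\,DX)=\tr(E^2)+\tfrac Pn\tr E=\tr(E^2)$.
\item Divergence of $E$: this gives $\partial_iE_{ij}=\partial_j\partial_iX_i-\tfrac1n\partial_jP=\tfrac{n-1}{n}\partial_jP$, after commuting derivatives.
\item Derivative falling on the weight: this gives $(1-n)w^{-n}(E^T\nabla w)\cdot X=\tfrac{1-n}{n}w^{1-n}\nabla Q_w\cdot X$, by \eqref{eq:Ou-form}.
\end{itemize}
Summing and multiplying by $n(p-1)$ yields
$$
n(p-1)w^{1-n}\tr(E^2)+(n-1)(p-1)w^{1-n}X\cdot\nabla(P_w-Q_w).
$$
Since $P_w-Q_w=\mathfrak e_\tau(w)$ by \eqref{eq:defn-Qw}, this is exactly the right-hand side of \eqref{eq:bochner}.

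Second, I would justify this computation weakly. I fix $\varphi\in C_c^\infty(\Omega_{\mathrm{reg}})$ and expand
$$
-\int w^{1-n}E X\cdot\nabla\varphi=-\int w^{1-n}\bigl[(DX)X-\tfrac Pn X\bigr]\cdot\nabla\varphi.
$$
The only problematic term is the one that formally involves second derivatives of $X$, namely $\int w^{1-n}\varphi\,X_j\partial_j\partial_iX_i$. I would not differentiate $P$ here. Instead I integrate by parts so that it becomes $-\int\operatorname{div}(w^{1-n}\varphi X)\,P_w$, which is legitimate because $X$ is $W^{1,2}$ and $P_w\in L^\infty$. To make this rigorous, I would mollify $X$ and pass to the limit, using $W^{1,2}$ convergence and $\operatorname{div}X_\varepsilon\to P_w$ in $L^2_{\loc}$.

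Then I split $P_w=Q_w+\mathfrak e_\tau(w)$. The $Q_w$ part integrates back by parts, since $Q_w\in W^{1,2}_{\loc}$, and produces the $\nabla Q_w$ term, which combines with the weight term exactly as in the formal computation. The $\mathfrak e_\tau(w)$ part becomes $\int\varphi\, w^{1-n}X\cdot D(\mathfrak e_\tau(w))$ by the BV integration-by-parts formula. The identity $\tr(E\,DX)=\tr(E^2)$ holds pointwise a.e., and $\tr(\mathsf E_w^2)\ge0$ by \eqref{eq:E-square-positive}.

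I expect the main obstacle to be this BV pairing step. The field $\psi:=w^{1-n}\varphi X$ is only continuous and $W^{1,2}$, not $C^1$, so $-\int\mathfrak e_\tau(w)\operatorname{div}\psi=\int\psi\cdot D(\mathfrak e_\tau(w))$ needs an approximation argument. I would mollify $\psi$. Then $\operatorname{div}\psi_\delta\to\operatorname{div}\psi$ in $L^2$, which controls the left side because $\mathfrak e_\tau(w)$ is bounded. Also $\psi_\delta\to\psi$ uniformly on the compact support, which controls the right side because $|D(\mathfrak e_\tau(w))|$ has finite mass there (Lemma~\ref{lem:BV-composition}).

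Nonnegativity of $\Lscr_wQ_w$ then follows from $\tr(\mathsf E_w^2)\ge0$ and from \eqref{eq:positive-pairing}, once the positive factor $w^{1-n}$ is absorbed into the test function.
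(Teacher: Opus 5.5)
Your proposal is correct and follows essentially the paper's route. Both use the flux identity \eqref{eq:stress-relation} and the same three-term computation, in which the weight derivative, via \eqref{eq:Ou-form}, cancels the $\nabla Q_w$ part of $\frac{n-1}{n}\nabla P_w$. The only difference is in the rigorous justification: the paper first shows $P_w\in BV$ and proves the unweighted identity \eqref{eq:div-EX} by mollifying $\Xbf_w$, whereas you move the derivative off $P_w$ onto the test field and pair $\mathfrak e_\tau(w)$ with a mollified continuous $W^{1,2}$ field, which is an equivalent duality argument.
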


\begin{proof}
Fix
$$
U'\subset\subset U''\subset\subset U \subset\subset\Omega_{\mathrm{reg}}.
$$
Lemma~\ref{lem:Q-derivative} gives
$Q_w\in W^{1,2}(U)$, and
Lemma~\ref{lem:BV-composition}, applied to
$U''\subset \subset U$, gives
$$
    \mathfrak e_\tau(w)\in BV(U'').
$$
Since $U''$ is bounded, $W^{1,2}(U'')\subset W^{1,1}(U'')$, and therefore
$$
    P_w=Q_w+\mathfrak e_\tau(w)\in BV(U'').
$$
Moreover, on $U''$, 
\begin{equation}\label{eq:DP-decomposition}
DP_w = \nabla Q_w\,dx+D(\mathfrak e_\tau(w)).
\end{equation}

We first prove
\begin{equation}\label{eq:div-EX}
 \operatorname{div}(\mathsf E_w\Xbf_w)
 =\tr(\mathsf E_w^2)\dd x  +\frac{n-1}{n}\Xbf_w\cdot DP_w.
\end{equation}
For a smooth vector field $X$, let
$P=\operatorname{div}X$ and $E=DX-(P/n)\Id$.  With Einstein convention,
$$
\partial_iE_{ij} =\partial_i\partial_jX_i-\frac1n\partial_jP
=\frac{n-1}{n}\partial_j P.
$$
Furthermore,
$$
 E_{ij}\partial_iX_j =\tr(E\,DX) =\tr(E^2),
$$
as $DX=E+(P/n)\Id$ and $\tr E=0$.  Hence
$$\operatorname{div}(EX)=\tr(E^2)+\frac{n-1}n X\cdot\nabla P. $$

Recall $U''$ with $U'\subset\subset U''\subset\subset U$ and mollify $\Xbf_w$ inside $U$:
$$
\Xbf_w^{(k)}:=\rho_k*\Xbf_w,
\qquad P_w^{(k)}:=\operatorname{div}\Xbf_w^{(k)},
\qquad \mathsf E_w^{(k)}:=D\Xbf_w^{(k)}-\frac{P_w^{(k)}}n\Id.
$$
Then, for all sufficiently large $k$,
$$
P_w^{(k)}=\operatorname{div}\Xbf_w^{(k)} =\rho_k*P_w \qquad\text{on }U''.
$$
and
$$
 D\Xbf_w^{(k)}\to D\Xbf_w\quad\text{in }L^2(U''),
 \qquad  \Xbf_w^{(k)}\to\Xbf_w\quad\text{uniformly on }U''.
$$
Since $P_w\in BV(U'')$, $DP_w^{(k)}\weakto DP_w$ weakly  on $U'$ and $|DP_w^{(k)}|(U')$ is uniformly bounded.  Then for every test function $\eta\in C_c^1(U')$,
\begin{multline*}
 \left| \int\eta\Xbf_w^{(k)}\cdot DP_w^{(k)} -\int\eta\Xbf_w\cdot DP_w \right|\\
\le  \|\eta\|_\infty \|\Xbf_w^{(k)}-\Xbf_w\|_{L^\infty(U')} |DP_w^{(k)}|(U') +  \left| \int \eta\Xbf_w\cdot DP_w^{(k)}  -\int \eta\Xbf_w\cdot DP_w \right| \to 0.
\end{multline*}
The left-hand divergence term in \eqref{eq:div-EX} also converges.  Indeed, for every $\eta\in C_c^1(U')$,
\begin{multline*}
\|\eta(\mathsf E_w^{(k)}\Xbf_w^{(k)}-\mathsf E_w\Xbf_w)\|_{L^1(U')}\\
\le  \|\eta\|_\infty
\|\mathsf E_w^{(k)}-\mathsf E_w\|_{L^2(U')}
\|\Xbf_w^{(k)}\|_{L^2(U')}+
\|\eta\|_\infty \|\mathsf E_w\|_{L^2(U')}\|\Xbf_w^{(k)}-\Xbf_w\|_{L^2(U')} \to 0.
\end{multline*}
Moreover, the first term on the right-hand side of  \eqref{eq:div-EX} converges as well, since
\begin{multline*}
\left\| \tr\bigl((\mathsf E_w^{(k)})^2\bigr)-\tr(\mathsf E_w^2) \right\|_{L^1(U')} 
\le C\|\mathsf E_w^{(k)}-\mathsf E_w\|_{L^2(U')}
\left( \|\mathsf E_w^{(k)}\|_{L^2(U')}+\|\mathsf E_w\|_{L^2(U')}\right) \to 0.
\end{multline*}
Passing to the limit in the smooth identity proves \eqref{eq:div-EX}.

By Lemma~\ref{lem:stress-relation},
\begin{equation}\label{eq:LQ-as-divEX}
w^{2-n}\mathsf A_w\nabla Q_w =n(p-1)w^{1-n}\mathsf E_w\Xbf_w.
\end{equation}
Also,
$$
 \nabla w\cdot\mathsf E_w\Xbf_w
 =\frac{w}{n(p-1)}\nabla w\cdot\mathsf A_w\nabla Q_w
 =\frac wn\Xbf_w\cdot\nabla Q_w,
$$
since $\mathsf A_w\nabla w=(p-1)\Xbf_w$.  Now taking divergence in \eqref{eq:LQ-as-divEX}, using \eqref{eq:div-EX}, and differentiating the weight gives
\begin{align*}
 &\frac1{n(p-1)}\Lscr_wQ_w = \frac{1}{n(p-1)}{\rm div}(w^{2-n}\mathsf A_w\nabla Q_w ) \\
 ={}&w^{1-n}\tr(\mathsf E_w^2)\dd x
 +\frac{n-1}{n}w^{1-n}\Xbf_w\cdot DP_w -\frac{n-1}{n}w^{1-n} \Xbf_w\cdot\nabla Q_w\dd x.
\end{align*}
Plugging  \eqref{eq:DP-decomposition} into the formula above cancels the absolutely continuous $\nabla Q_w$-terms and proves \eqref{eq:bochner}; recall \cite{MW2024}.  In addition, the positivity of  $\Lscr_w Q_w$ follows from \eqref{eq:E-square-positive} and \eqref{eq:positive-pairing}.
\end{proof}

\begin{rem} \label{rem:ou-bochner-comparison}
In the pure-power case $c_\tau \equiv c_{\tau,0}$, we have
$$
\mathfrak e_\tau \equiv 0, \qquad Q_w = P_w.
$$
Then the first-order identity \eqref{eq:Ou-form} reduces to
$$
\mathsf E_w^{\,T}\nabla w = \frac{w}{n}\nabla P_w,
$$
and the Bochner identity above becomes 
$$
\Lscr_w P_w = n(p-1) w^{1-n} \tr(\mathsf E_w^2)\,dx; 
$$
see also \cite[Lemma 2.2, Formula (2.12)]{O2025}.
Moreover, the identity \eqref{eq:div-EX} is an analogue of the fundamental transformed-tensor identity \cite[Lemma 2.1 (iii)]{O2025} employed by Ou. 
This also shows that the choice of the weight $w^{1-n}$ is dictated by an exact cancellation mechanism introduced by Ma--Wu \cite{MW2024}: Differentiation of $w^{1-n}$ produces a term that cancels the corresponding contribution from $\Xbf_w \cdot \nabla P_w$.

For a general monotone coefficient, we have
$$
P_w = Q_w + \mathfrak e_\tau(w).
$$
The same cancellation mechanism eliminates the absolutely continuous component of $\nabla Q_w$, while the residual singular (Stieltjes) contribution
$$
\Xbf_w \cdot D(\mathfrak e_\tau(w))
$$
exhibits a favorable sign. This is precisely the mechanism by which the invariant-tensor identity, initially derived in the pure-power framework, is generalized to the broader class of functions analyzed in the present manuscript.
\end{rem}

We next establish the following nonlinear Kato inequality; the author was subsequently informed that this result had in fact been obtained earlier by Sun and Wang in \cite[Lemma 2.1 (3)]{SW2025}.

\begin{lem} \label{lem:rayleigh}
Almost everywhere on the regular set $\Omega_{\mathrm{reg}}$,
\begin{equation}\label{eq:rayleigh}
 \tr(\mathsf E_w^2) \ge  \frac{w^2(\Xbf_w\cdot\nabla Q_w)^2}  {n(n-1)|\nabla w|^{2p}}.
\end{equation}
\end{lem}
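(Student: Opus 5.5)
The plan is to reduce \eqref{eq:rayleigh} to an elementary algebraic inequality for symmetric trace-free matrices, applied to the symmetrized tensor $\mathsf T_w$ of Lemma~\ref{lem:stress-relation}. The first ingredient is this: if $T$ is a real symmetric $n\times n$ matrix with $\tr T=0$ and $\nu$ is any unit vector, then
$$
|T|^2\ge \frac{n}{n-1}(\nu\cdot T\nu)^2 .
$$
To prove it, complete $\nu$ to an orthonormal basis and set $t:=\nu\cdot T\nu$. The remaining $n-1$ diagonal entries then sum to $-t$, so by Cauchy--Schwarz their squares sum to at least $t^2/(n-1)$. Dropping the off-diagonal entries gives $|T|^2\ge t^2+t^2/(n-1)=\frac{n}{n-1}t^2$.

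Next, work pointwise almost everywhere on $\Omega_{\mathrm{reg}}$, where $\mathsf A_w$ is symmetric positive definite and $\nabla w\neq0$. I choose the unit vector
$$
\nu:=\frac{\mathsf A_w^{1/2}\nabla w}{|\mathsf A_w^{1/2}\nabla w|}.
$$
Since $\mathsf T_w=\mathsf A_w^{-1/2}\mathsf E_w\mathsf A_w^{1/2}$ and $\mathsf A_w^{1/2}$ is symmetric, the Rayleigh quotient becomes
$$
\nu\cdot\mathsf T_w\nu=\frac{\nabla w\cdot \mathsf E_w\mathsf A_w\nabla w}{\nabla w\cdot\mathsf A_w\nabla w}.
$$
Using $\mathsf A_w\nabla w=(p-1)\Xbf_w$ and $\nabla w\cdot\Xbf_w=|\nabla w|^p$, this equals $\nabla w\cdot\mathsf E_w\Xbf_w/|\nabla w|^p$.

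The third step inserts the stress relation \eqref{eq:stress-relation}. Using $\mathsf A_w\nabla w=(p-1)\Xbf_w$ once more (this computation already appears in the proof of Proposition~\ref{prop:bochner}),
$$
\nabla w\cdot\mathsf E_w\Xbf_w=\frac{w}{n(p-1)}\nabla w\cdot\mathsf A_w\nabla Q_w=\frac wn\,\Xbf_w\cdot\nabla Q_w .
$$
Hence $\nu\cdot\mathsf T_w\nu = w\,\Xbf_w\cdot\nabla Q_w/(n|\nabla w|^p)$. Combining this with $\tr(\mathsf E_w^2)=|\mathsf T_w|^2$ from \eqref{eq:E-square-positive} and the algebraic inequality yields
$$
\tr(\mathsf E_w^2)\ge\frac{n}{n-1}\cdot\frac{w^2(\Xbf_w\cdot\nabla Q_w)^2}{n^2|\nabla w|^{2p}},
$$
which is exactly \eqref{eq:rayleigh}.

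There is no real obstacle. The main point is choosing $\nu$ in the $\mathsf A_w^{1/2}$-twisted direction rather than along $\nabla w$ itself, so that the non-symmetric $\mathsf E_w$ enters only through its similarity transform $\mathsf T_w$. One also needs all identities to hold almost everywhere, which follows from $w\in W^{2,2}_{\loc}(\Omega_{\mathrm{reg}})$ and $Q_w\in W^{1,2}_{\loc}(\Omega_{\mathrm{reg}})$ as shown in Lemma~\ref{lem:Q-derivative}.
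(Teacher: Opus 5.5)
Your proof is correct and is essentially the paper's argument: since $\mathsf A_w^{1/2}\nabla w=(p-1)\mathsf A_w^{-1/2}\Xbf_w$, your unit vector $\nu$ is exactly the normalized direction $y=\mathsf A_w^{-1/2}\Xbf_w$ that the paper uses. The trace-free Cauchy--Schwarz bound, the stress relation \eqref{eq:stress-relation}, and $\tr(\mathsf E_w^2)=|\mathsf T_w|^2$ also enter in the same way, and the only cosmetic difference is that you write the Rayleigh quotient as $\nabla w\cdot\mathsf E_w\Xbf_w/|\nabla w|^p$, whereas the paper writes it as $\Xbf_w\cdot\mathsf A_w^{-1}\mathsf E_w\Xbf_w$ divided by $|y|^2=|\nabla w|^p/(p-1)$.
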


\begin{proof}
Let $y:=\mathsf A_w^{-1/2}\Xbf_w$. Since we work on $\Omega_{\mathrm{reg}}$, one has
$\mathbf X_w\ne 0$, and hence $y\ne0$.
Let $T$ be a symmetric and trace-free matrix. We choose an orthonormal basis with first vector ${\mathbf e}_1=y/|y|$ and write $t_i={\mathbf e}_i\cdot T{\mathbf e}_i$.  Then the off-diagonal entries contribute nonnegatively to $|T|^2$, while $\sum_{i=1}^nt_i=0$.  Therefore,  
$$
 |T|^2\ge \sum_{i=1}^nt_i^2  \ge t_1^2+\frac1{n-1}\left(\sum_{i=2}^nt_i\right)^2  =\frac n{n-1}t_1^2.
$$
Since $t_1=y\cdot Ty/|y|^2$, this yields
\begin{equation}\label{eq:tracefree-linear-algebra}
|T|^2\ge\frac n{n-1} \left(\frac{y\cdot Ty}{|y|^2}\right)^2.
\end{equation}

Now we apply this to $T=\mathsf T_w$, which has been defined in \eqref{eq:defn-Tw}.  
Namely,  \eqref{eq:tracefree-linear-algebra} and
\eqref{eq:E-square-positive} give
\begin{equation}\label{eq:traceEw}
    \tr (\mathsf E_w^2) = |\mathsf T_w|^2\ge \frac n{n-1} \left(\frac{y\cdot \mathsf T_wy}{|y|^2}\right)^2. 
\end{equation}
Moreover, by \eqref{eq:stress-relation},
$$
 y\cdot\mathsf T_wy =\Xbf_w\cdot\mathsf A_w^{-1}\mathsf E_w\Xbf_w  =\frac{w}{n(p-1)}\Xbf_w\cdot\nabla Q_w.
$$
Also,
$$
 |y|^2  =\Xbf_w\cdot\mathsf A_w^{-1}\Xbf_w
 =\frac{|\nabla w|^p}{p-1}.
$$
Consequently, applying these substitutions in \eqref{eq:traceEw} we obtain \eqref{eq:rayleigh}.
\end{proof}

\begin{lem}
There is $C=C(n,p,L,\Lambda)$ such that
\begin{equation}\label{eq:gradient-w}
|\nabla w|\le Cw  \quad\text{in }\R^n,
\end{equation}
and
\begin{equation}\label{eq:Q-growth}
|Q_w|\le C(1+w^{p-1}).
\end{equation}
\end{lem}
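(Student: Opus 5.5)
The plan is to reduce \eqref{eq:gradient-w} to a logarithmic gradient bound for $V$, and then read off \eqref{eq:Q-growth} from the algebraic formula \eqref{eq:Q-algebraic}.

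\emph{Reduction to $V$.} Since $V=w^{-a}$, \eqref{eq:defn-AV} gives $\nabla w=-a^{-1}w^{a+1}\nabla V$. Hence
$$
\frac{|\nabla w|}{w}=\frac1a\,w^{a}|\nabla V|=\frac1a\,\frac{|\nabla V|}{V},
$$
so it suffices to show $|\nabla V|\le C\,V$ on $\R^n$ with $C=C(n,p,\Lambda)$. For this I rewrite the equation for $V$ in the linearized form required by Lemma~\ref{lem:relative-gradient}:
$$
-\Delta_pV=b(x)V^{p-1},\qquad b:=g_\tau(V)V^{1-p}.
$$
For every $\tau\in[0,\infty]$ we have $g_\tau(t)\le\Lambda t^q$, because $h$, $h_0$ and $h_\infty$ all lie in $[L,\Lambda]$. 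Together with $0<V\le1$ and $q-p+1=p/a>0$ from \eqref{eq:exponent-identities}, this gives
$$
0\le b\le\Lambda V^{q-p+1}\le\Lambda\quad\text{on }\R^n.
$$

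\emph{Applying the relative gradient estimate.} Fix an arbitrary $y\in\R^n$ and take $R=1$. Since $V>0$ on all of $\R^n$ by Corollary~\ref{cor:C1-regularity}, the function $Z=V$ satisfies the hypotheses of Lemma~\ref{lem:relative-gradient} on $B_2(y)$ with $N_0=\Lambda$. The lemma gives
$$
|\nabla V(y)|\le C(n,p,\Lambda)\,V(y).
$$
Because $y$ is arbitrary, \eqref{eq:gradient-w} follows with $C=a^{-1}C(n,p,\Lambda)$. The point is that boundedness $V\le1$ turns the critical coefficient into a uniformly bounded potential at unit scale. Since the domain is all of $\R^n$, no scale restriction arises.

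\emph{Growth of $Q_w$.} By \eqref{eq:Q-algebraic}, $Q_w=\mathfrak k_\tau(w)+b_p|\nabla w|^p/w$. From \eqref{eq:defect-functions} and $0<c_\tau\le a^{1-p}\Lambda$, for $t\ge1$ we get
$$
|\mathfrak k_\tau(t)|\le c_{\tau,0}+\int_1^\infty\frac{c_\tau(s)}{s^2}\dd s\le 2a^{1-p}\Lambda.
$$
Using \eqref{eq:gradient-w}, the second term is bounded by $b_pC^pw^{p-1}$. Adding the two bounds yields \eqref{eq:Q-growth}.

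I do not expect a real obstacle. The only point needing care is checking that the bound $b\le\Lambda$ is uniform across the whole compact family, including the endpoint cases $\tau=0$ and $\tau=\infty$, so that the constant depends only on $(n,p,\Lambda)$. (It is in fact independent of $L$, which is consistent with the stated $C(n,p,L,\Lambda)$.)
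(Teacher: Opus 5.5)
Your proof is correct and follows the paper's argument essentially step for step. You rewrite the equation as $-\Delta_pV=bV^{p-1}$ with $0\le b\le\Lambda$, apply Lemma~\ref{lem:relative-gradient} on unit balls, and convert to $w$ via $|\nabla w|/w=a^{-1}|\nabla V|/V$; you then obtain \eqref{eq:Q-growth} from \eqref{eq:Q-algebraic} and the boundedness of $\mathfrak k_\tau$.
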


\begin{proof}
By \eqref{eq:h-assumptions}, the equation for $V$ can be written
$$
-\Delta_pV=b_\tau(x)V^{p-1},\qquad    b_\tau(x):=\frac{g_\tau(V(x))}{V(x)^{p-1}}.
$$
Since $0<V\le 1$ and $q-p+1>0$, and
$g_\tau(s)\le\Lambda s^q$, one has
$$
0<b_\tau(x) \le\Lambda V(x)^{q-p+1} \le\Lambda.
$$
Then Lemma~\ref{lem:relative-gradient}, applied in every unit ball, gives $|\nabla V|\le C(n,p,\Lambda)V$.  Since $w=V^{-1/a}$,
$$
|\nabla w| =\frac1aV^{-1/a-1}|\nabla V|\le CV^{- 1/a}\le Cw.
$$
This proves \eqref{eq:gradient-w}.

Moreover, the function $\mathfrak k_\tau$ is bounded on $[1,\infty)$ since $c_\tau$ is bounded and $\int_1^\infty s^{-2}\dd s<\infty$.  Formula \eqref{eq:Q-algebraic} now gives \eqref{eq:Q-growth}.
\end{proof}

\subsection{A global bound for \texorpdfstring{$Q_w$}{Qw}}
The weak Bochner identity shows that $Q_w$ is subharmonic with
respect to the linearized weighted operator on the regular set. This alone does not prevent a positive maximum from escaping to infinity.  We therefore fix a target level
$\ell>c_{\tau,0}$ and construct a decreasing weight
$\phi_{\ell,\mathfrak m}$ below such that
$$
\mathscr M_{\ell,\mathfrak m}:=Q_w\phi_{\ell,\mathfrak m}(w)
$$
satisfies, on its superlevel set
$\{\mathscr M_{\ell,\mathfrak m}>\ell\}$, a uniformly elliptic
differential inequality with a strictly positive zeroth-order
term.  The parameter $\mathfrak m$ will later be sent to infinity so that $\phi_{\ell,\mathfrak m}\to 1$ on every bounded $w$-range.

To be more precise, define
\begin{equation}\label{eq:defn-Jl}
\ell>c_{\tau,0}, \qquad
\mathfrak m>0, \qquad J_\ell(t):=\ell-\mathfrak k_\tau(t).
\end{equation}
Since $\mathfrak k_\tau$ is nonincreasing and
$\mathfrak k_\tau(1)=c_{\tau,0}$,
$$
J_\ell(t)\ge\ell-c_{\tau,0}>0
\quad \text{ for every } \ t\ge 1.
$$
Let $Y_{\ell,\mathfrak m}$ solve
\begin{equation}\label{eq:Y-ODE}
 Y' +\left(\frac a t+\frac{b_p \mathfrak k_\tau'}{J_\ell}\right)Y  =\frac{b_p\mathfrak e_\tau}{tJ_\ell},
 \qquad  Y(1)=\mathfrak m.
\end{equation}

We claim that, for every $t\ge1$,
\begin{equation}\label{eq:Y-representation}
 Y_{\ell,\mathfrak m}(t) = t^{-a}J_\ell(t)^{b_p}
 \Bigg[ \mathfrak m(\ell-c_{\tau,0})^{-b_p} +b_p\int_1^t
 s^{a-1}J_\ell(s)^{-b_p-1}\mathfrak e_\tau(s)\dd s \Bigg].
\end{equation}
Indeed,  since $J_\ell'=-\mathfrak k_\tau'$ by the definition \eqref{eq:defn-Jl}, then
$$
 \frac{\dd}{\dd t}\log(t^aJ_\ell^{-b_p})
 =\frac at+\frac{b_p\mathfrak k_\tau'}{J_\ell}.
$$
Applying this identity, multiplying \eqref{eq:Y-ODE} by $t^aJ_\ell^{-b_p}$ and integrating from $1$ to $t$ , we obtain  \eqref{eq:Y-representation}.  Note that the initial term is strictly positive and the integral term is nonnegative. In particular, $Y_{\ell,\mathfrak m}>0$.

Define
\begin{equation}\label{eq:defn-thetalm}
\begin{aligned}
 \theta_{\ell,\mathfrak m}(t)  &:=\frac{n-1}{1+Y_{\ell,\mathfrak m}(t)},\\
 \mathfrak s_{\ell,\mathfrak m}(t) &:=\theta_{\ell,\mathfrak m}(t)  \left(1-\frac{\theta_{\ell,\mathfrak m}(t)}{n-1}\right),\\
 \phi_{\ell,\mathfrak m}(t) &:=\exp\left(-\int_1^t \frac{\theta_{\ell,\mathfrak m}(s)}s\dd s\right).
\end{aligned}
\end{equation}
Thus, we have $0<\theta<n-1$, $\mathfrak s>0$, $0<\phi\le1$, and
\begin{equation}\label{eq:phi-derivative}
\frac{t\phi'_{\ell,\mathfrak m}}{\phi_{\ell,\mathfrak m}}=-\theta_{\ell,\mathfrak m}.
\end{equation}
Let
\begin{equation}\label{eq:defn-alm}
\mathfrak a_{\ell,\mathfrak m}(t) :=\frac{a\mathfrak s_{\ell,\mathfrak m}(t)  -t\theta_{\ell,\mathfrak m}'(t)}{b_p}.
\end{equation}

\begin{lem} 
For every $t\ge1$,
\begin{equation}\label{eq:a-cancellation-form}
 \mathfrak a_{\ell,\mathfrak m}(t)
 =\frac{ \mathfrak s_{\ell,\mathfrak m}(t)\mathfrak k_\tau(t)
 +\theta_{\ell,\mathfrak m}(t)\mathfrak e_\tau(t)}
 {J_\ell(t)}.
\end{equation}
Namely,
\begin{equation}\label{eq:key-cancellation}
 \mathfrak a_{\ell,\mathfrak m}(t)
 \bigl(\ell-\mathfrak k_\tau(t)\bigr)
 =\mathfrak s_{\ell,\mathfrak m}(t)\mathfrak k_\tau(t)
  +\theta_{\ell,\mathfrak m}(t)\mathfrak e_\tau(t).
\end{equation}
Moreover,
\begin{equation}\label{eq:a-positive}
 \mathfrak a_{\ell,\mathfrak m}(t)
 =\frac{\theta_{\ell,\mathfrak m}(t)}
 {(1+Y_{\ell,\mathfrak m}(t))J_\ell(t)}
 \left( \mathfrak e_\tau(t) +Y_{\ell,\mathfrak m}(t)\frac{c_\tau(t)}t \right)>0.
\end{equation}
\end{lem}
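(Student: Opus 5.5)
The plan is a direct computation: express $\mathfrak s_{\ell,\mathfrak m}$ and $t\theta_{\ell,\mathfrak m}'$ in terms of $Y:=Y_{\ell,\mathfrak m}$, and then eliminate $Y'$ using the ODE \eqref{eq:Y-ODE}. A preliminary regularity remark justifies the pointwise derivative. On $[1,\infty)$ the coefficient $a/t+b_p\mathfrak k_\tau'/J_\ell$ and the source $b_p\mathfrak e_\tau/(tJ_\ell)$ are continuous, because $\mathfrak k_\tau\in C^1$ with $\mathfrak k_\tau'=-c_\tau/t^2$, $\mathfrak e_\tau$ is continuous, and $J_\ell\ge\ell-c_{\tau,0}>0$. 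Hence $Y\in C^1$, and therefore $\theta:=\theta_{\ell,\mathfrak m}\in C^1$, so $\mathfrak a_{\ell,\mathfrak m}$ is defined pointwise for every $t\ge1$.

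First I would record the algebraic identities coming from \eqref{eq:defn-thetalm}:
$$
\mathfrak s_{\ell,\mathfrak m}=\theta\Bigl(1-\frac1{1+Y}\Bigr)=\frac{(n-1)Y}{(1+Y)^2},
\qquad
t\theta'=-\frac{(n-1)\,tY'}{(1+Y)^2}.
$$
These give
$$
a\mathfrak s_{\ell,\mathfrak m}-t\theta'=\frac{n-1}{(1+Y)^2}\bigl(aY+tY'\bigr).
$$
Multiplying \eqref{eq:Y-ODE} by $t$ yields
$$
aY+tY'=\frac{b_p}{J_\ell}\bigl(\mathfrak e_\tau-t\mathfrak k_\tau'Y\bigr)=\frac{b_p}{J_\ell}\Bigl(\mathfrak e_\tau+Y\frac{c_\tau}{t}\Bigr).
$$
Dividing by $b_p$ and writing $(n-1)/(1+Y)^2=\theta/(1+Y)$ gives exactly the first equality in \eqref{eq:a-positive}. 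Positivity then follows termwise: $\theta>0$, $Y>0$ by the representation \eqref{eq:Y-representation}, $J_\ell>0$, $\mathfrak e_\tau\ge0$ by the lemma on $\mathfrak e_\tau$, and $c_\tau>0$, so $Y c_\tau/t>0$.

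For \eqref{eq:a-cancellation-form}, equivalently \eqref{eq:key-cancellation}, I would compare right-hand sides using the decomposition $c_\tau/t=\mathfrak k_\tau+\mathfrak e_\tau$ from \eqref{eq:defect-functions}. Indeed,
$$
\mathfrak s_{\ell,\mathfrak m}\mathfrak k_\tau+\theta\mathfrak e_\tau=\frac{\theta}{1+Y}\bigl(Y\mathfrak k_\tau+(1+Y)\mathfrak e_\tau\bigr)=\frac{\theta}{1+Y}\Bigl(\mathfrak e_\tau+Y\frac{c_\tau}{t}\Bigr).
$$
This coincides with $J_\ell\,\mathfrak a_{\ell,\mathfrak m}$ as already computed. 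Since $J_\ell=\ell-\mathfrak k_\tau$, this is \eqref{eq:key-cancellation}.

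There is no real obstacle here. The only points needing care are the sign bookkeeping in $t\theta'$ and the substitution $-t\mathfrak k_\tau'=c_\tau/t$, which is precisely where the choice of the ODE \eqref{eq:Y-ODE} produces the cancellation.
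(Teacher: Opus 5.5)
Your proposal is correct and follows the paper's proof essentially line by line. You express $\mathfrak s_{\ell,\mathfrak m}$ and $t\theta_{\ell,\mathfrak m}'$ through $Y_{\ell,\mathfrak m}$, and eliminate $Y'$ via \eqref{eq:Y-ODE} using $t\mathfrak k_\tau'=-c_\tau/t$ to get \eqref{eq:a-positive}. You then recover \eqref{eq:a-cancellation-form} from $\mathfrak k_\tau+\mathfrak e_\tau=c_\tau/t$, and your remark that $Y_{\ell,\mathfrak m}\in C^1$ (so $\mathfrak a_{\ell,\mathfrak m}$ is defined pointwise) supplies a detail the paper leaves implicit.
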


\begin{proof}
Write $Y$, $\theta$, and $\mathfrak s$ for brevity.  From
$\theta=(n-1)/(1+Y)$ in \eqref{eq:defn-thetalm}, it follows that
$$
\mathfrak s=\frac{\theta Y}{1+Y},
\qquad \text{ and } \quad -t\theta'=\frac{\theta tY'}{1+Y}.
$$
Consequently, \eqref{eq:defn-alm} yields
$$
\mathfrak a_{\ell,\mathfrak m} =\frac{\theta}{b_p(1+Y)}(aY+tY').
$$
Since $t\mathfrak k_\tau'(t)=-c_\tau(t)/t$ by \eqref{eq:defect-functions}, the equation
\eqref{eq:Y-ODE} is equivalent to
$$
aY+tY' =\frac{b_p}{J_\ell(t)}
  \left(Y\frac{c_\tau(t)}t+\mathfrak e_\tau\right).
$$
This proves \eqref{eq:a-positive}, including the claimed strict positivity.  

To obtain \eqref{eq:a-cancellation-form}, one expands \eqref{eq:a-positive} and uses \eqref{eq:defn-thetalm} together with
$$
\mathfrak k_\tau(t)+\mathfrak e_\tau(t) =\frac{c_\tau(t)}t.
$$
We therefore conclude the lemma. 
\end{proof}

\begin{rem}\label{rem:target-multiplier-design}
The ODE \eqref{eq:Y-ODE} is designed backwards from the
 inequality \eqref{eq:completed-square} below. For simplicity, we omit the subscripts $\ell$ and $\mathfrak m$ here. Let
$$\mathscr M=Q_w\phi(w), \qquad -\frac{t\phi'(t)}{\phi(t)}=\theta(t).
$$
Then the expansion of $\mathscr L_w\mathscr M$, after a suitable drift is subtracted, leaves the zeroth-order expression
$$
 Q_w\Bigl[ \mathfrak a(Q_w-\mathfrak k_\tau) -\mathfrak s\mathfrak k_\tau  -\theta\mathfrak e_\tau \Bigr].
$$
We therefore impose
$$
 \mathfrak a(\ell-\mathfrak k_\tau)
 = \mathfrak s\mathfrak k_\tau+\theta\mathfrak e_\tau,
$$
which turns the bracket into
\(\mathfrak a(Q_w-\ell)\).
Writing
\(\theta=(n-1)/(1+Y)\) reduces this cancellation condition to the
linear ODE \eqref{eq:Y-ODE}.

This indeed follows the general strategy of inserting a scalar weight to cancel tensor-gradient cross terms.  In the pure-power setting, fixed weights $v^qg^m$ appear in
\cite[Lemma~2.2,  (2.12), and Proposition~2.3,
 (2.15)]{O2025}; in the semilinear invariant-tensor
setting, a power weight is chosen for the same purpose in e.g.
\cite[Section~2.3, (2.8)]{MW2024}.
The present target-dependent ODE is needed since
$\mathfrak k_\tau$ and $\mathfrak e_\tau$ are nonconstant.
\end{rem}

Define
\begin{equation}\label{eq:defn-Mlm}
\Mscr_{\ell,\mathfrak m} :=Q_w\phi_{\ell,\mathfrak m}(w).
\end{equation}
The  formula \eqref{eq:Q-algebraic} shows that $Q_w$
is continuous.  Hence $\mathscr M_{\ell,\mathfrak m}$ is
continuous and its super-level set  is open.

Observe that, on the super-level set
$$
\mathcal O_{\ell,\mathfrak m}:=\{\Mscr_{\ell,\mathfrak m}>\ell\},
$$
one has $Q_w>\ell$ since $0<\phi\le1$. Since $w\ge1$ by \eqref{eq:defn-w}, \eqref{eq:Q-algebraic}, together with
$\mathfrak k_\tau\le c_{\tau,0}$, gives
\begin{equation}\label{eq:gradient-lower-superlevel}
|\nabla w|^p =\frac{w}{b_p}(Q_w-\mathfrak k_\tau(w))
\ge\frac{\ell-c_{\tau,0}}{b_p}>0.
\end{equation}
Thus $\Lscr_w$ defined in \eqref{eq:defn-Lw} is locally uniformly elliptic on this set.

Set
\begin{equation}\label{eq:defn-zeta} 
\begin{aligned}
 \zeta_{\ell,\mathfrak m}
 &:=\frac{w} {\phi_{\ell,\mathfrak m}(w)|\nabla w|^p}
 \Xbf_w\cdot\nabla\Mscr_{\ell,\mathfrak m},\\
 \mathbf b_{\ell,\mathfrak m}
 &:=2(p-1)\theta_{\ell,\mathfrak m}(w)w^{1-n}
 \left(\frac{wQ_w}{(n-1)|\nabla w|^p}-1  \right)\Xbf_w.
\end{aligned}
\end{equation}

\begin{prop}\label{prop:bochner-lower}
On $\mathcal O_{\ell,\mathfrak m}$, in the sense of Radon measures,
\begin{multline}
\label{eq:completed-square}
\Lscr_w\Mscr_{\ell,\mathfrak m}
 -\mathbf b_{\ell,\mathfrak m}\cdot
  \nabla\Mscr_{\ell,\mathfrak m}  
\ge (p-1)\phi_{\ell,\mathfrak m}(w)w^{1-n}
 \left[  \frac{\zeta_{\ell,\mathfrak m}^2}{n-1} +\mathfrak a_{\ell,\mathfrak m}(w) Q_w(Q_w-\ell) \right]\dd x\\
 +(n-1)(p-1)\phi_{\ell,\mathfrak m}(w)w^{1-n}
 \Xbf_w\cdot D(\mathfrak e_\tau(w)).
\end{multline}
\end{prop}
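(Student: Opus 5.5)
The plan is to expand $\Lscr_w\Mscr_{\ell,\mathfrak m}$ by the product rule. The second-order part is then controlled by the weak Bochner identity \eqref{eq:bochner} and the Kato inequality \eqref{eq:rayleigh}. The first-order cross terms are rewritten in terms of $\zeta_{\ell,\mathfrak m}$ and the drift $\mathbf b_{\ell,\mathfrak m}$, and the remaining zeroth-order terms are matched to $\mathfrak a_{\ell,\mathfrak m}Q_w(Q_w-\ell)$ via \eqref{eq:key-cancellation}.

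\emph{Regularity and product rule.} By \eqref{eq:Y-representation}, $Y_{\ell,\mathfrak m}$ is $C^1$ on $[1,\infty)$, since the right-hand side of \eqref{eq:Y-ODE} is continuous. Hence $\theta$ is $C^1$ and $\phi$ is $C^2$. On $\mathcal O_{\ell,\mathfrak m}$, \eqref{eq:gradient-lower-superlevel} shows that we are inside $\Omega_{\mathrm{reg}}$ and that $\Lscr_w$ is locally uniformly elliptic. Lemma~\ref{lem:Q-derivative} gives $Q_w\in W^{1,2}_{\loc}$ and $w\in W^{2,2}_{\loc}$ there. I would verify the product rule
\[
\Lscr_w(Q_w\phi(w))=\phi(w)\,\Lscr_wQ_w+2(p-1)w^{2-n}\phi'(w)\,\Xbf_w\cdot\nabla Q_w+Q_w\operatorname{div}\bigl((p-1)w^{2-n}\phi'(w)\Xbf_w\bigr)
\]
in the sense of measures, by testing against $\eta\in C^1_c$. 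The point is that $\phi(w)\in C^1$ multiplies the Radon measure $\Lscr_wQ_w$ legitimately. Here I use $\mathsf A_w\nabla w=(p-1)\Xbf_w$.

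Next, insert $t\phi'=-\theta\phi$ from \eqref{eq:phi-derivative} and $\operatorname{div}\Xbf_w=P_w=Q_w+\mathfrak e_\tau(w)$. Differentiating $w^{1-n}\theta\phi$ then gives the last term as
\[
-(p-1)Q_w\phi\,w^{1-n}\Bigl[\bigl(w\theta'-\theta^2+(1-n)\theta\bigr)\tfrac{|\nabla w|^p}{w}+\theta(Q_w+\mathfrak e_\tau)\Bigr].
\]

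\emph{Second-order part.} Apply \eqref{eq:bochner}. It contributes $(n-1)(p-1)\phi w^{1-n}\Xbf_w\cdot D(\mathfrak e_\tau(w))$, which is kept exactly, plus $n(p-1)\phi w^{1-n}\tr(\mathsf E_w^2)$. The latter is bounded below by \eqref{eq:rayleigh}. Then substitute
\[
\Xbf_w\cdot\nabla Q_w=\frac{|\nabla w|^p}{w}\bigl(\zeta+\theta Q_w\bigr),
\]
which follows from $\Xbf_w\cdot\nabla\Mscr=\phi\,\Xbf_w\cdot\nabla Q_w-\theta\phi Q_w|\nabla w|^p/w$. The Kato term becomes $\frac{(p-1)\phi w^{1-n}}{n-1}(\zeta+\theta Q_w)^2$, and the cross term becomes $-2(p-1)\theta\phi w^{1-n}\frac{|\nabla w|^p}{w}(\zeta+\theta Q_w)$. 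Expanding, the $\zeta^2/(n-1)$ term survives. The terms linear in $\zeta$ have the coefficient $2(p-1)\theta\phi w^{1-n}\bigl(\tfrac{Q_w}{n-1}-\tfrac{|\nabla w|^p}{w}\bigr)$. Since $\mathbf b\cdot\nabla\Mscr=\phi\frac{|\nabla w|^p}{w}\cdot(\text{the }\mathbf b\text{ coefficient})\cdot\zeta$, these should be exactly $\mathbf b_{\ell,\mathfrak m}\cdot\nabla\Mscr$. I would check this identity first, as a consistency check on \eqref{eq:defn-zeta}.

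\emph{Zeroth-order part.} Collect all remaining terms; each carries a factor $Q_w$. Eliminate $|\nabla w|^p/w=(Q_w-\mathfrak k_\tau)/b_p$ using \eqref{eq:Q-algebraic}. Using $\mathfrak s=\theta(1-\theta/(n-1))$ and the definition \eqref{eq:defn-alm} of $\mathfrak a$, the bracket should reduce to
\[
\mathfrak a(Q_w-\mathfrak k_\tau)-\mathfrak s\mathfrak k_\tau-\theta\mathfrak e_\tau .
\]
The quadratic-in-$Q_w$ coefficient should come out as $\mathfrak a$: it collects $\theta^2/(n-1)$, $-\theta$, and $-(w\theta'-\theta^2+(1-n)\theta)/b_p$ together with $a$-terms via $b_p+1=n-a$. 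By \eqref{eq:key-cancellation}, the reduced bracket equals $\mathfrak a(Q_w-\ell)$, which yields \eqref{eq:completed-square}.

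The main obstacle is this bookkeeping step. The exponent identities \eqref{eq:exponent-identities} must produce exactly the combination $a\mathfrak s-t\theta'$ over $b_p$, with no stray $Q_w$-linear terms and the correct signs. Everything else, the measure-theoretic justification and the completed square, is routine once $\phi\in C^2$ and $\mathcal O_{\ell,\mathfrak m}\subset\Omega_{\mathrm{reg}}$ are in hand. If the coefficients do not match, I would first re-derive the divergence term of $w^{1-n}\theta\phi$, since a factor of $w$ is the most likely place to slip.
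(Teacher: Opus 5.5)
Your proposal is correct and follows essentially the same route as the paper: you expand $\Lscr_w(Q_w\phi(w))$ by the product rule, apply the Bochner identity \eqref{eq:bochner} together with the Kato bound \eqref{eq:rayleigh}, let $\mathbf b_{\ell,\mathfrak m}\cdot\nabla\Mscr_{\ell,\mathfrak m}$ absorb the $\zeta$-linear terms, and close with the cancellation \eqref{eq:key-cancellation}. One bookkeeping slip in your list of contributions: the $Q_w^2$ coefficient must also include the term $-2\theta^2/b_p$ coming from the cross term $-2(p-1)\theta^2\phi w^{1-n}Q_w|\nabla w|^p/w$, which turns $\bigl((n-1)\theta+\theta^2-w\theta'\bigr)/b_p$ into $\bigl((n-1)\mathfrak s-w\theta'\bigr)/b_p$, and only then does $n-1-b_p=a$ give exactly $\mathfrak a$.
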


\begin{proof}
We suppress the subscripts and write $\theta$, $\mathfrak s$, $\mathfrak a$, $\phi$, $\Mscr$, and $\zeta$ in the argument.   

Fix $U'\subset\subset U\subset\subset\mathcal O_{\ell,\mathfrak m}$.  On $U$, the gradient of $w$ is bounded away from zero by
\eqref{eq:gradient-lower-superlevel}.  Since $U\subset\subset
\mathcal O_{\ell,\mathfrak m}$, the functions $w$,
$|\nabla w|$, and $|\nabla w|^{-1}$ are bounded on $U$.
Consequently, $w^{2-n}\mathsf A_w$ is bounded and uniformly
elliptic there, and particularly all coefficients below are bounded. Moreover,  $Q_w\in W^{1,2}(U)$ by Lemma~\ref{lem:Q-derivative}, and the vector field
$w^{2-n}\mathsf A_w\nabla Q_w$ has Radon-measure divergence by
Proposition~\ref{prop:bochner}.

Note that, the distributional product rule
$\operatorname{div}(\chi F) =\chi\operatorname{div}F+\nabla\chi\cdot F$
is valid for $\chi\in C^1(U)$ and such a divergence-measure field $F$.  Indeed, for $\eta\in C_c^\infty(U)$, the definition of distributional divergence gives
\begin{equation}\label{eq:measure-product}
\langle\operatorname{div}(\chi F),\eta\rangle
 =-\int_U\chi F\cdot\nabla\eta\,\dd x  =\langle\operatorname{div}F,\chi\eta\rangle    +\int_U\eta\nabla\chi\cdot F\,\dd x,
\end{equation}
which is the desired product rule. 
The remaining product rules involve only $W^{1,2}$ functions and the
$W^{1,2}$ stress.  Thus the calculation below is valid on $U'$ as an identity of measures, and since $U'$ is arbitrary, it is valid on the whole super-level set.

From \eqref{eq:phi-derivative} and \eqref{eq:defn-Mlm},
\begin{equation}\label{eq:nablaM}
\nabla\Mscr =\nabla Q_w \phi(w)-\frac{\theta(w) \phi(w)}{w}  Q_w \nabla w 
=\phi(w)\left(\nabla Q_w-\frac{\theta(w)Q_w}{w}\nabla w \right).
\end{equation}
Taking the inner product with $\Xbf_w$ and applying \eqref{eq:defn-zeta}  gives
\begin{equation}\label{eq:zeta-relation}
\frac{w}{|\nabla w|^p}\Xbf_w\cdot\nabla Q_w =\zeta+\theta Q_w,
\end{equation}
where we applied $\Xbf_w\cdot Dw=|Dw|^p$.

Since $\mathsf A_w\nabla w=(p-1)\Xbf_w$, the product rule gives
\begin{align}\label{eq:LM-first-expansion}
{}&\Lscr_w\Mscr
=\operatorname{div}\!\left(
 \phi(w)w^{2-n}\mathsf A_w\nabla Q_w
\right) +(p-1)\operatorname{div}\!\left(
 Q_w\phi'(w)w^{2-n}\Xbf_w\right)\notag\\
={}&\phi(w)\Lscr_wQ_w+2(p-1)w^{2-n}\phi'(w) \Xbf_w\cdot\nabla Q_w  +(p-1)Q_w\operatorname{div}\!\left( w^{2-n}\phi'(w)\Xbf_w\right). 
\end{align}
Observe that the product rule \eqref{eq:measure-product} is applicable to the first divergence term, since $\phi(w)$ is locally of class $C^1$. For the second divergence term, note that on $U$ we have
$$
 Q_w\phi'(w)w^{2-n}\Xbf_w\in W^{1,1}(U;\R^n).
$$
Indeed, $Q_w,\Xbf_w\in W^{1,2}(U)\cap L^\infty(U)$, whereas the scalar factors depending on $w$ are bounded and have bounded first derivatives as well. Consequently, the distributional divergence of this vector field belongs to $L^1(U)$. Therefore, we only need to apply the measure-valued product rule \eqref{eq:measure-product} for the first term, namely the one involving $w^{2-n}\mathsf A_w\nabla Q_w$.

Since $\phi'=-\theta\phi/w$,
\begin{equation}\label{eq:phi'}
 w^{2-n}\phi'=-\theta\phi w^{1-n}.
\end{equation}
Using \eqref{eq:phi'} twice gives
$$
\frac{\dd}{\dd w}(w^{2-n}\phi')
=\frac{\dd}{\dd w}(-\theta\phi w^{1-n})
=\phi w^{-n}\bigl((n-1)\theta+\theta^2-w\theta'\bigr).
$$
Hence, employing $\Xbf_w\cdot\nabla w=|\nabla w|^p$
and \eqref{eq:phi'} again,
\begin{equation}\label{eq:div-weighted-phiX}
 \operatorname{div}(w^{2-n}\phi'\Xbf_w)
 =\phi w^{-n}\left[((n-1)\theta+\theta^2-w\theta')|Dw|^p
   -\theta wP_w \right].
\end{equation}

Now Proposition~\ref{prop:bochner} and Lemma~\ref{lem:rayleigh} give
\begin{equation}\label{eq:LQ-rayleigh}
 \Lscr_wQ_w \ge \frac{p-1}{n-1}w^{3-n}
\frac{(\Xbf_w\cdot\nabla Q_w)^2}{|\nabla w|^{2p}}\dd x +(n-1)(p-1)w^{1-n} \Xbf_w\cdot D(\mathfrak e_\tau(w)).
\end{equation} 
Substituting \eqref{eq:div-weighted-phiX} and
\eqref{eq:LQ-rayleigh} into \eqref{eq:LM-first-expansion}, applying \eqref{eq:defn-Qw} together with \eqref{eq:Q-algebraic} to get
\begin{equation}\label{eq:PwQw}
\frac{|\nabla w|^p}{w}=\frac{Q_w-\mathfrak k_\tau(w)}{b_p},
\qquad P_w=Q_w+\mathfrak e_\tau(w),
\end{equation}
and further employing \eqref{eq:zeta-relation},  we conclude the absolutely continuous part of \eqref{eq:LM-first-expansion} has the following lower bound
\begin{align}
 (p-1)\phi w^{1-n}&{}\left( \frac{(\zeta+\theta Q_w)^2}{n-1}
 -\frac{2\theta(Q_w-\mathfrak k_\tau)}{b_p}
  (\zeta+\theta Q_w)\right.\notag\\
 &\left. \qquad\quad +Q_w\left[
 \frac{(n-1)\theta+\theta^2-w\theta'}{b_p}
 (Q_w-\mathfrak k_\tau) -\theta(Q_w+\mathfrak e_\tau)
 \right]\right).\label{eq:absolute-LQ}
\end{align}
Moreover, by the definition of $\zeta$ in \eqref{eq:defn-zeta},
$$
\Xbf_w\cdot\nabla\Mscr =\frac{\phi(w)|\nabla w|^p}{w}\,\zeta.
$$
Consequently, using \eqref{eq:PwQw} and the definition of $\mathbf b$ in \eqref{eq:defn-zeta}, we arrive at
\begin{equation}\label{eq:drift-expansion}
\mathbf b\cdot\nabla\Mscr= 2(p-1)\phi w^{1-n}\theta
\left( \frac{Q_w}{n-1} -\frac{Q_w-\mathfrak k_\tau}{b_p}\right)\zeta.
\end{equation}

Subtracting \eqref{eq:drift-expansion} from \eqref{eq:absolute-LQ} gives an exact  cancellation of the terms which are linear in $\zeta$, as their coefficient are exactly
$$
 \frac{2\theta Q_w}{n-1}  -\frac{2\theta(Q_w-\mathfrak k_\tau)}{b_p}
 -2\theta\left( \frac{Q_w}{n-1} -\frac{Q_w-\mathfrak k_\tau}{b_p}\right)=0.
$$
Furthermore, note that the $\zeta^2$-term remains, and the remaining non-$\zeta$ part is
\begin{align*}
&(p-1) \phi w^{1-n}\mathcal R\\
:={}&
(p-1) \phi w^{1-n}\left( \frac{\theta^2Q_w^2}{n-1}
 -\frac{2\theta^2Q_w(Q_w-\mathfrak k_\tau)}{b_p}\right.\\
&\left. \qquad \qquad \qquad \qquad +Q_w\left[
 \frac{(n-1)\theta+\theta^2-w\theta'}{b_p}(Q_w-\mathfrak k_\tau) -\theta(Q_w+\mathfrak e_\tau)
 \right]\right).
\end{align*}

Recall from \eqref{eq:defn-thetalm} and \eqref{eq:defn-alm} that 
$$\mathfrak s=\theta\left(1-\frac\theta {n-1}\right) \quad \text{ and } \quad \mathfrak a(w)= \frac{ a \mathfrak s- w\theta'}{b_p}. $$
Then since $Q_w>\ell>0$ on
$\mathcal O_{\ell,\mathfrak m}$, we may factor out $Q_w$
from the remaining zeroth-order term. Then the  coefficient of $Q_w$ is
$$
 \left(\frac{\theta^2}{n-1}-\theta\right)  +\frac{(n-1)\theta-\theta^2-w\theta'}{b_p} 
=-\mathfrak s+\frac{(n-1)\mathfrak s-w\theta'}{b_p}
=\frac{a\mathfrak s-w\theta'}{b_p}   =\mathfrak a,
$$
and the coefficient of $\mathfrak k_\tau$ is
\begin{align*}
\frac{2\theta^2}{b_p}  -\frac{(n-1)\theta+\theta^2-w\theta'}{b_p} 
=\frac{\theta^2-(n-1)\theta+w\theta'}{b_p}  =\frac{w\theta'-(n-1)\mathfrak s}{b_p} 
=-\mathfrak a-\mathfrak s.
\end{align*}
Therefore we conclude that
$$
 \frac{\mathcal R}{Q_w} =\mathfrak a(Q_w-\mathfrak k_\tau) -\mathfrak s\mathfrak k_\tau -\theta\mathfrak e_\tau.
$$
Thus the absolutely continuous part after the drift is
\begin{equation}\label{eq:after-drift}
\frac{\zeta^2}{n-1} + \mathcal R= \frac{\zeta^2}{n-1}
 +Q_w\left[ \mathfrak a(Q_w-\mathfrak k_\tau) 
-\mathfrak s\mathfrak k_\tau -\theta\mathfrak e_\tau
 \right].
\end{equation}
Finally, \eqref{eq:key-cancellation} changes the bracket in
\eqref{eq:after-drift} into $\mathfrak a(Q_w-\ell)$.  The singular measure in \eqref{eq:LQ-rayleigh} is multiplied by the positive continuous factor $\phi(w)$.  This proves \eqref{eq:completed-square}.
\end{proof}

Next we study the asymptotic behavior of the  multipliers.
\begin{lem}
The limit
$$
d_\tau:=\lim_{t\to\infty}\mathfrak e_\tau(t)
$$
exists in $[0,\infty)$.  Moreover, for any $\mathfrak m>0$
\begin{equation}\label{eq:k-e-limits}
\mathfrak k_\tau(t)\to-d_\tau \quad
\text{ and } \quad
Y_{\ell,\mathfrak m}(t)\to \frac{b_pd_\tau}{a(\ell+d_\tau)}
\quad \text{ as } \ t\to \infty. 
\end{equation}
Consequently,
\begin{equation}\label{eq:theta-infinity}
 \theta_{\ell,\mathfrak m}(t)\to
 \theta_{\ell,\infty}  :=\frac{n-1}{1+b_pd_\tau/[a(\ell+d_\tau)]}>0 \quad \text{ as } \ t\to \infty,
\end{equation}
and $\theta_{\ell,\infty}\to n-1$ as $\ell\to\infty$.
For every finite $T>1$,
\begin{equation}\label{eq:m-to-infinity}
\phi_{\ell,\mathfrak m}\to 1
\quad\text{uniformly on }[1,T] \quad \text{ as } \ \mathfrak m\to\infty.
\end{equation}
\end{lem}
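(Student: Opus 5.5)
The plan is to extract everything from the explicit formulas \eqref{eq:defect-functions}, \eqref{eq:Y-representation} and \eqref{eq:defn-thetalm}, together with the fact that $c_\tau$ is continuous, nondecreasing and bounded by $a^{1-p}\Lambda$.

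\textbf{Step 1: the limit $d_\tau$.} Since $\mathfrak e_\tau$ is nondecreasing, it suffices to bound it from above. Because $\mathfrak k_\tau$ is nonincreasing with
$$
\mathfrak k_\tau(t)\ge c_{\tau,0}-a^{1-p}\Lambda\int_1^\infty s^{-2}\,ds,
$$
it converges to a finite limit $k_\infty$. Also $c_\tau(t)/t\to0$, since $c_\tau$ is bounded. Hence $\mathfrak e_\tau=c_\tau/t-\mathfrak k_\tau$ is bounded and tends to $d_\tau=-k_\infty$, which is nonnegative by the nonnegativity of $\mathfrak e_\tau$. This gives the first half of \eqref{eq:k-e-limits}. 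As a consequence, $J_\ell(t)\to\ell+d_\tau>0$.

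\textbf{Step 2: the limit of $Y_{\ell,\mathfrak m}$.} Use \eqref{eq:Y-representation}. The initial term is bounded by a constant times $t^{-a}$, since $J_\ell^{b_p}$ is bounded; so it tends to $0$. For the integral term, write it as
$$
b_p\,J_\ell(t)^{b_p}\,\frac{\int_1^t s^{a-1}F(s)\,ds}{t^a},\qquad F(s):=J_\ell(s)^{-b_p-1}\mathfrak e_\tau(s)\to(\ell+d_\tau)^{-b_p-1}d_\tau .
$$
A Cesàro/L'Hôpital argument then gives
$$
t^{-a}\int_1^t s^{a-1}F(s)\,ds\to \frac{1}{a}\lim_{s\to\infty}F(s).
$$
Concretely, split the integral at a large $s_0$ and use $\int_{s_0}^t s^{a-1}\,ds\le t^a/a$. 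This covers the case $d_\tau=0$ trivially. Multiplying by $b_pJ_\ell^{b_p}\to b_p(\ell+d_\tau)^{b_p}$ yields the limit $b_pd_\tau/[a(\ell+d_\tau)]$. Then \eqref{eq:theta-infinity} follows directly from the definition of $\theta$. Positivity is clear, and $\theta_{\ell,\infty}\to n-1$ as $\ell\to\infty$ because $d_\tau/(\ell+d_\tau)\to0$.

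\textbf{Step 3: $\mathfrak m\to\infty$.} By \eqref{eq:Y-representation} and the nonnegativity of the integral term,
$$
Y_{\ell,\mathfrak m}(t)\ge \mathfrak m\, t^{-a}J_\ell(t)^{b_p}(\ell-c_{\tau,0})^{-b_p}\ge \mathfrak m\,T^{-a}.
$$
For the last inequality we use $J_\ell\ge\ell-c_{\tau,0}$ on $[1,T]$. Therefore
$$
0<\theta_{\ell,\mathfrak m}\le \frac{n-1}{1+\mathfrak m T^{-a}}\quad\text{on }[1,T],
$$
and so
$$
1\ge\phi_{\ell,\mathfrak m}(t)\ge\exp\Big(-\frac{(n-1)\log T}{1+\mathfrak mT^{-a}}\Big)\to1,
$$
which is \eqref{eq:m-to-infinity}.

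The only mildly delicate point is the averaging limit in Step 2, where one must ensure that the decaying prefactor $t^{-a}$ exactly balances the growth of the integral. The rest is bookkeeping.
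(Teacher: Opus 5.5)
Your proposal is correct and follows essentially the same route as the paper: boundedness and monotonicity give $d_\tau$, the explicit representation \eqref{eq:Y-representation} combined with a Ces\`aro splitting gives the limit of $Y_{\ell,\mathfrak m}$, and uniform divergence of $Y_{\ell,\mathfrak m}$ on $[1,T]$ gives $\phi_{\ell,\mathfrak m}\to1$. The differences are cosmetic. You obtain $d_\tau$ through the monotone convergence of $\mathfrak k_\tau$ rather than that of $\mathfrak e_\tau$. You also make the last step quantitative via the bound $Y_{\ell,\mathfrak m}\ge \mathfrak m T^{-a}$ on $[1,T]$, where the paper just asserts the uniform divergence.
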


\begin{proof}
Recall that, the function $\mathfrak e_\tau$ is nonnegative and nondecreasing.  It is bounded since $c_\tau(t)/t$ is bounded and $\mathfrak k_\tau(t)=c_{\tau,0}-\int_1^tc_\tau(s)s^{-2}\,\dd s$ is bounded uniformly when $t\ge1$.  Hence $d_\tau$ exists.  Moreover, since $c_\tau(t)/t\to 0$ as $t\to \infty$ and
$\mathfrak e_\tau=c_\tau/t-\mathfrak k_\tau$, the first limit in \eqref{eq:k-e-limits} follows as well.

Using \eqref{eq:Y-representation}, write
$$
Y_{\ell,\mathfrak m}(t)
=\mathfrak m(\ell-c_{\tau,0})^{-b_p}
t^{-a}J_\ell(t)^{b_p} + b_pJ_\ell(t)^{b_p}
\left[t^{-a}\int_1^t s^{a-1}J_\ell(s)^{-b_p-1}
\mathfrak e_\tau(s)\,\dd s \right].
$$
Since $J_\ell(t)\to\ell+d_\tau$ is bounded, the first term tends to zero due to the $t^{-a}$-term.
Further set
$$
r(s):=J_\ell(s)^{-b_p-1}\mathfrak e_\tau(s),
\qquad r_\infty:=(\ell+d_\tau)^{-b_p-1}d_\tau .
$$
Then $r(s)\to r_\infty$ by the definition of $J_\ell$ in \eqref{eq:defn-Jl}.  Thus for $1<T<t$,
\begin{align*}
&\ \left|t^{-a}\int_1^t s^{a-1}r(s)\,\dd s
-\frac{r_\infty}{a}\right|\\
=& \ \left| t^{-a}\int_1^t s^{a-1}\bigl(r(s)-r_\infty\bigr)\,\dd s -\frac{r_\infty}{a}t^{-a} \right|\\
\le& \ t^{-a}\int_1^T
s^{a-1}|r(s)-r_\infty|\,\dd s+
t^{-a}\int_T^t s^{a-1}|r(s)-r_\infty|\,\dd s +\frac{|r_\infty|}{a}t^{-a}.
\end{align*}
Given $\epsilon>0$, choose $T$ so that $|r(s)-r_\infty|\le\epsilon$ for $s\ge T$.  Thus by letting $t\to\infty$ and then $\epsilon\to 0$, the definition of $r(s)$ gives
$$
 t^{-a}\int_1^t  s^{a-1}J_\ell(s)^{-b_p-1}\mathfrak e_\tau(s)\dd s \to \frac{d_\tau}{a}(\ell+d_\tau)^{-b_p-1}.
$$
Consequently, multiplying the preceding limit by $b_p J_\ell(t)^{b_p}$ yields the second limit in \eqref{eq:k-e-limits}. Note that the formula remains valid when $d_\tau=0$ as well.  Now equation \eqref{eq:theta-infinity} follows from the definition of $\theta$ in \eqref{eq:defn-thetalm}.

For fixed $T$, since $(\ell-c_{\tau, 0})^{-b_p}>0$ by \eqref{eq:defn-Jl},  by \eqref{eq:Y-representation}  one has $Y_{\ell,\mathfrak m}\to\infty$ uniformly on $[1, T]$ as $\mathfrak m\to \infty$, hence
$\theta_{\ell,\mathfrak m}\to0$ uniformly.  Thus, integrating
$t\phi'_{\ell,\mathfrak m}/\phi_{\ell,\mathfrak m}=-\theta_{\ell,\mathfrak m}$ in \eqref{eq:phi-derivative} proves \eqref{eq:m-to-infinity}.
\end{proof}

Let us recall the following interior maximum principle. 

\begin{lem}
\label{lem:linear-weak-harnack-minimum}
Let $D\subset\mathbb R^n$ be connected. Let
$A\in L^\infty(D;\mathbb R^{n\times n})$ be symmetric and uniformly elliptic, and let $b\in L^\infty(D;\mathbb R^n)$. Suppose
$$
z\in W^{1,2}_{\rm loc}(D)\cap C(D), \qquad z\ge0,
$$
and
$$
-\operatorname{div}(A\nabla z)+b\cdot\nabla z\ge0
$$
weakly in $D$. If $z(x_0)=0$ at some $x_0\in D$, then
$z\equiv0$ in $D$.
\end{lem}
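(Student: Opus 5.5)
This is a strong minimum principle for nonnegative supersolutions of a linear divergence-form operator with bounded drift. I would obtain it from the weak Harnack inequality (De Giorgi--Moser, in the form of Gilbarg--Trudinger, Theorem~8.18) combined with a connectedness argument.

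The weak Harnack inequality applies to operators $Lz=-\operatorname{div}(A\nabla z)+b\cdot\nabla z$ with bounded measurable coefficients and $A$ uniformly elliptic. It says the following: for every ball with $B_{4r}(y)\subset\subset D$ and some exponent $1\le s<n/(n-2)$ (any $s<\infty$ if $n\le 2$), a nonnegative weak supersolution $z\in W^{1,2}(B_{4r}(y))$ satisfies
\begin{equation*}
 r^{-n/s}\|z\|_{L^s(B_{2r}(y))}\le C\operatorname*{ess\,inf}_{B_r(y)}z .
\end{equation*}
Here $C$ depends only on $n$, the ellipticity constants, $r\|b\|_{L^\infty}$ and $s$. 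The hypothesis $z\in W^{1,2}_{\rm loc}(D)$ gives $z\in W^{1,2}(B_{4r}(y))$ on such balls. No zeroth-order term is present, so adding constants is harmless, though I will not need that.

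Now set $Z:=\{x\in D: z(x)=0\}$. Since $z$ is continuous, $Z$ is relatively closed in $D$, and by hypothesis $x_0\in Z$, so $Z\neq\emptyset$.

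For openness, take $y\in Z$ and choose $r>0$ with $B_{4r}(y)\subset\subset D$. Because $z$ is continuous, the essential infimum over $B_r(y)$ equals the pointwise infimum, and that infimum is $0$ since it is attained at $y$. The weak Harnack inequality then gives $\|z\|_{L^s(B_{2r}(y))}=0$. Hence $z=0$ almost everywhere on $B_{2r}(y)$, and by continuity everywhere there, so $B_{2r}(y)\subset Z$.

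Thus $Z$ is nonempty, open and relatively closed in the connected set $D$, so $Z=D$. The only delicate point is that the weak Harnack inequality must be used with the drift term. Its constant depends on $r\|b\|_\infty$, which is fine because the balls are fixed and small. Its test-function class is $W^{1,2}_0$, which matches the weak formulation we are given.

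As a fallback, Trudinger's strong maximum principle for weak supersolutions, Gilbarg--Trudinger Theorem~8.19, gives the conclusion directly.
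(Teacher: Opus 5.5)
Your proposal is correct and follows the paper's proof: the weak Harnack inequality of Gilbarg--Trudinger (Theorems~8.18--8.19), applied on a small ball around an interior zero, forces $z$ to vanish on a neighbourhood of that zero, and the connectedness of $D$ then propagates the vanishing. Your open--closed argument for the zero set is the same step as the paper's chain of overlapping balls, and your handling of the sign convention, the dependence of the constant on $r\|b\|_{L^\infty}$, and the $W^{1,2}(B_{4r}(y))$ requirement is accurate.
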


\begin{proof}
For every $B_{2r}\subset \subset D$, the weak Harnack inequality for
nonnegative supersolutions gives, for some $\theta>0$,
$$
\left(\fint_{B_r}z^\theta\right)^{1/\theta}
\le C\inf_{B_r}z;
$$
see, for example, \cite[Theorems~8.18 and~8.19]{GT2001}.
Thus, an interior zero  forces $z$ to vanish on a smaller ball, and then a chain of overlapping balls propagates this vanishing throughout the connected domain.
\end{proof}

\begin{lem} \label{lem:maximum-exclusion}
Let $D$ be a ball, let $A(x)$ be bounded and uniformly elliptic on $D$, and let $b\in L^\infty(D;\R^n)$.  Suppose
$z\in W^{1,2}_{\loc}(D)\cap C(D)$ satisfies
\begin{equation}\label{eq:strict-subsolution-maximum}
\operatorname{div}(A\nabla z)-b\cdot\nabla z
\ge c_0
\end{equation}
in the weak sense for some $c_0>0$.  Then $z$ cannot attain a maximum at an interior point of $D$.
\end{lem}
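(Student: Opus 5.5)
The plan is a contradiction argument: assume $z$ attains an interior maximum at some $x_0\in D$, and perturb $z$ into a function that is still a subsolution of the homogeneous equation (right-hand side $\ge 0$) but whose maximum is attained only in the interior of a small ball. Then apply the weak maximum principle on that ball.

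Let $M:=z(x_0)=\max_D z$ and fix a small ball $B_r(x_0)\subset\subset D$. Consider the quadratic perturbation
$$
\psi(x):=\varepsilon|x-x_0|^2 .
$$
In the weak sense,
$$
\operatorname{div}(A\nabla\psi)-b\cdot\nabla\psi=2\varepsilon\operatorname{div}\bigl(A(x-x_0)\bigr)-2\varepsilon\, b\cdot(x-x_0).
$$
The difficulty is that $A$ is only bounded and measurable, so $\operatorname{div}(A(x-x_0))$ need not be a function. I would avoid this by not differentiating $A$ at all. Instead, use the weak formulation directly: for every nonnegative $\eta\in C^\infty_c(B_r)$, the hypothesis gives
$$
-\int A\nabla z\cdot\nabla\eta-\int (b\cdot\nabla z)\,\eta\ge c_0\int\eta .
$$

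Next I would pass to the linear function $\tilde z:=z-M+\delta$, where $\delta>0$ is small, and test with $\eta=(\tilde z-k)_+$-type truncations. This is the standard proof of the weak maximum principle (GT Theorem 8.1), applied to the operator $\operatorname{div}(A\nabla\cdot)-b\cdot\nabla$ acting on $z$ in $B_r(x_0)$. It gives $\sup_{B_r}z\le\sup_{\partial B_r}z$; this requires only $\ge 0$, not $\ge c_0$.

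The strict inequality comes from the strong maximum principle. Apply Lemma~\ref{lem:linear-weak-harnack-minimum} to the nonnegative function $M-z$, which is continuous and lies in $W^{1,2}_{\loc}$. It satisfies
$$
-\operatorname{div}(A\nabla(M-z))+b\cdot\nabla(M-z)=\operatorname{div}(A\nabla z)-b\cdot\nabla z\ge c_0\ge 0
$$
weakly, and it vanishes at $x_0$. Hence $z\equiv M$ on the connected set $D$. But a constant function has
$$
\operatorname{div}(A\nabla z)-b\cdot\nabla z=0<c_0
$$
in the distributional sense. Concretely, choosing any nonnegative $\eta\in C_c^\infty(D)$ with $\int\eta>0$ makes the left side of the weak inequality equal to $0$ and the right side positive. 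This contradiction shows no interior maximum exists.

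The main obstacle is only a bookkeeping one: checking that Lemma~\ref{lem:linear-weak-harnack-minimum} applies with the same sign conventions, namely that the drift enters as $+b\cdot\nabla(M-z)$. The computation above confirms this, so the weak-maximum-principle detour in the second step is actually unnecessary, and the proof reduces to the strong-minimum-principle argument plus the constant-function contradiction.
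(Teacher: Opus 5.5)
Your final argument is the paper's proof: apply Lemma~\ref{lem:linear-weak-harnack-minimum} to the nonnegative supersolution $M-z$, which vanishes at $x_0$, conclude that $z$ is constant, and contradict $c_0>0$ by testing with a nonnegative nonzero $\eta$. The only cosmetic difference is that the paper localizes to a ball $B_{2\rho}(x_0)$ around a possibly only local maximum, whereas you use the global maximum on the connected set $D$; the quadratic-perturbation and weak-maximum-principle detours are, as you note yourself, unnecessary and can be deleted.
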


\begin{proof}
Suppose that $z$ has a local maximum at $x_0$. Choose
$B_{2\rho}(x_0)\subset \subset D$ such that $z\le z(x_0)$ there, and set
$u:=z(x_0)-z\ge0.$
Then $u(x_0)=0$ and
$$
-\operatorname{div}(A\nabla u)+b\cdot\nabla u\ge c_0>0
\quad\text{weakly in }B_{2\rho}(x_0).
$$
Now Lemma~\ref{lem:linear-weak-harnack-minimum} gives
$u\equiv0$ in $B_{2\rho}(x_0)$. This contradicts the strict positive right-hand side by testing with a nonnegative nonzero function in $C_c^\infty(B_{2\rho}(x_0))$.
\end{proof}

Now we are ready to prove the following sharp global bound. 
\begin{prop}\label{prop:Q-bound}
The transformed function satisfies
\begin{equation}\label{eq:Qw-upper} 
Q_w\le c_{\tau,0} \quad\text{in }\R^n.
\end{equation}
\end{prop}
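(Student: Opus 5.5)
We argue by contradiction combined with the maximum principle for $\Mscr_{\ell,\mathfrak m}=Q_w\phi_{\ell,\mathfrak m}(w)$. Suppose $Q_w(x_1)>c_{\tau,0}$ at some point $x_1$. We pick $\ell$ with $c_{\tau,0}<\ell<Q_w(x_1)$, and using \eqref{eq:m-to-infinity} we then choose $\mathfrak m$ so large that $\Mscr_{\ell,\mathfrak m}(x_1)>\ell$. In this way the super-level set $\mathcal O_{\ell,\mathfrak m}$ is nonempty. The aim is to show that $\Mscr_{\ell,\mathfrak m}$ attains an interior maximum on $\mathcal O_{\ell,\mathfrak m}$, and then to rule this out by Proposition~\ref{prop:bochner-lower} together with Lemma~\ref{lem:maximum-exclusion}.

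\emph{Step 1: behaviour at infinity.} We need $\limsup_{|x|\to\infty}\Mscr_{\ell,\mathfrak m}\le\ell'$ for some $\ell'<\sup\Mscr_{\ell,\mathfrak m}$, or at least that $\Mscr_{\ell,\mathfrak m}$ does not approach its supremum along a sequence escaping to infinity. By \eqref{eq:Q-growth}, $Q_w\le C(1+w^{p-1})$. Since $\theta_{\ell,\mathfrak m}(t)\to\theta_{\ell,\infty}$, we have $\phi_{\ell,\mathfrak m}(t)\approx t^{-\theta_{\ell,\infty}}$ for large $t$, so $\Mscr\lesssim w^{p-1-\theta_{\ell,\infty}}$. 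Taking $\ell$ large makes $\theta_{\ell,\infty}$ close to $n-1>p-1$, which gives $\Mscr\to0$ wherever $w\to\infty$. This requirement on $\ell$ is compatible with the contradiction only if $\sup Q_w$ is large, so the choice has to be made more carefully. The cleaner route is to use that $\theta_{\ell,\infty}\ge$ some positive lower bound and to handle the region where $w$ stays bounded separately.

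The genuine obstacle is that $w$ need not tend to infinity: $V$ is only bounded with $V\le1$, and a priori no decay is known. My plan for this difficulty is a translation--compactness argument. Take a maximizing sequence $x_k$ for $\Mscr$ with $|x_k|\to\infty$ and translate $V_k:=V(\cdot+x_k)$. By the uniform $C^{1,\beta}$ bounds of Lemma~\ref{lem:uniform-C1-compactness}, together with the Harnack bound $|\nabla V|\le CV$ (which controls $V_k/V_k(0)$ on compact sets), we pass to a limit. If $w(x_k)$ stays bounded, the limit solves the same equation with the same $\tau$, possibly without the normalization $V(0)=1$ but still with $V\le1$. The limit function $\Mscr_\infty$ then attains its supremum at the origin, which is again the interior-maximum situation. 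If instead $w(x_k)\to\infty$, the decay estimate from Step 1 applies.

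\emph{Step 2: exclusion of the interior maximum.} At an interior maximum point $x_*\in\mathcal O_{\ell,\mathfrak m}$ (for $w$ or for a translated limit), the gradient lower bound \eqref{eq:gradient-lower-superlevel} makes $\Lscr_w$ uniformly elliptic nearby. The singular term in \eqref{eq:completed-square} is nonnegative by \eqref{eq:positive-pairing}, and the zeroth-order term satisfies $\mathfrak a_{\ell,\mathfrak m}(w)Q_w(Q_w-\ell)\ge c_0>0$ near $x_*$, by \eqref{eq:a-positive} and because $Q_w\ge\Mscr>\ell$ there. Dropping the $\zeta^2$ term, Lemma~\ref{lem:maximum-exclusion} forbids the maximum, which is the desired contradiction. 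Letting first $\mathfrak m\to\infty$ and then $\ell\downarrow c_{\tau,0}$ yields \eqref{eq:Qw-upper}. I expect Step 1, that is, controlling escape to infinity when $w$ does not grow, to be the main difficulty. There the limit must be shown to be an admissible solution, with the regularity and the $\mathfrak e_\tau$-pairing preserved under $C^1$ convergence.
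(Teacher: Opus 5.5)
Your proposal has a genuine gap in Step 1, at exactly the tension you noticed but did not resolve. In your contradiction setup $\ell$ must lie in $(c_{\tau,0},Q_w(x_1))$, so it may be arbitrarily close to $c_{\tau,0}$. For such $\ell$ nothing forces $\theta_{\ell,\infty}>p-1$. By \eqref{eq:theta-infinity}, $\theta_{\ell,\infty}$ tends to $(n-1)/(1+b_p/a)=a$ when $d_\tau/(\ell+d_\tau)\to1$, and $a<p-1$ as soon as $n<p^2$. Moreover, $d_\tau$ can be large compared with $c_{\tau,0}$ when $\Lambda/L$ is large. The only a priori information is $Q_w=O(w^{p-1})$ from \eqref{eq:Q-growth}. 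So $\Mscr_{\ell,\mathfrak m}$ need not tend to $0$ as $w\to\infty$, and you cannot even guarantee that $\sup\Mscr_{\ell,\mathfrak m}$ is finite. Your sentence ``if instead $w(x_k)\to\infty$, the decay estimate from Step~1 applies'' is therefore unjustified, because that estimate was only obtained for large $\ell$. Your suggested alternative, using only $\theta_{\ell,\infty}>0$, works only once $Q_w$ is already known to be bounded, and the proposal never establishes that. Conversely, the case you flag as the main obstacle, a maximizing sequence with $w$ bounded, is the routine one. The translated limit is just another solution with $0<V_\infty\le1$, so Proposition~\ref{prop:bochner-lower} applies to it directly, and nothing needs to be preserved under $C^1$ convergence.

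The missing idea is a two-stage bootstrap. First fix $\bar\ell>c_{\tau,0}$ so large that $\theta_{\bar\ell,\infty}>p-1$. Then prove, with no contradiction hypothesis and for every $\mathfrak m>0$, that $\Mscr_{\bar\ell,\mathfrak m}\le\bar\ell$ on $\R^n$. For this $\bar\ell$ one has $\Mscr_{\bar\ell,\mathfrak m}\to0$ as $w\to\infty$, so the supremum is finite. If it exceeded $\bar\ell$, a maximizing sequence would have bounded $w$, the translation--compactness argument would produce an interior maximum, and Lemma~\ref{lem:maximum-exclusion} excludes it. Letting $\mathfrak m\to\infty$ via \eqref{eq:m-to-infinity} then gives the coarse bound $Q_w\le\bar\ell$ on $\R^n$. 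Only after this should you run your argument for $\ell$ close to $c_{\tau,0}$. Since $\theta_{\ell,\infty}>0$ forces $\phi_{\ell,\mathfrak m}(t)\to0$, you get $(\Mscr_{\ell,\mathfrak m})_+\le\bar\ell\,\phi_{\ell,\mathfrak m}(w)\to0$ as $w\to\infty$. Hence no maximizing sequence above level $\ell$ can escape, and your Step~2 closes the proof.
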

\begin{rem}\label{rem:two-stage-Q-bound}
Let us explain the idea of the proof. 
For a sufficiently large target $\bar\ell$, the limiting decay
exponent  $\theta_{\bar\ell,\infty}$ exceeds $p-1$.  Hence, for every fixed $\mathfrak m>0$, the multiplier
$\phi_{\bar\ell,\mathfrak m}$ has asymptotic decay exponent
$\theta_{\bar\ell,\infty}>p-1$, and therefore decays faster than the a priori growth $Q_w=O(w^{p-1})$, which yields a coarse global bound $Q_w\le\bar\ell$ in   Step 2 below.

Once the coarse bound is available, any positive limiting exponent $\theta_{\ell,\infty}>0$ is enough to prevent the positive part of $Q_w\phi_{\ell,\mathfrak m}(w)$ from escaping to infinity.
For the sharp $\ell$, the parameter $\mathfrak m$ is chosen after identifying a hypothetical point where $Q_w > \ell$, ensuring that the  convergence $\phi_{\ell,\mathfrak m} \to 1$ pushes the weighted quantity above $\ell$ at that spot.

\end{rem}
\begin{proof}[Proof of Proposition~\ref{prop:Q-bound}]
We first choose a large target.  Observe that when $\ell\to \infty$, one has $\theta_{\ell,\infty}\to n-1$ according to \eqref{eq:theta-infinity} and  that  $p-1<n-1$. Then we take
$\bar\ell>c_{\tau,0}$ so that
$$
\theta_{\bar\ell,\infty}>p-1.
$$

\medskip
\noindent{\bf Step 1: A decay at $\infty$.}
Fix an arbitrary $\mathfrak m>0$.  From \eqref{eq:theta-infinity} and \eqref{eq:phi-derivative}, for every sufficiently small $\epsilon>0$ there is $T$ such that
\begin{equation}\label{eq:phi-decay}
\phi_{\bar\ell,\mathfrak m}(t)
\le C_\epsilon t^{-\theta_{\bar\ell,\infty}+\epsilon}
\quad\text{ for any } \ t\ge T.
\end{equation}
We choose $\epsilon$ so that the decay exponent remains greater than $p-1$.  Then the definition of $\Mscr_{\bar\ell,\mathfrak m}$ in \eqref{eq:defn-Mlm} together with the growth estimate \eqref{eq:Q-growth} gives
\begin{equation}\label{eq:large-ell-escape}
\Mscr_{\bar \ell, \mathfrak m}(w)=Q_w\phi_{\bar\ell,\mathfrak m}(w)\to 0
\qquad\text{ along every sequence } x_j  \text{ for which }  w(x_j)\to\infty.
\end{equation}

\medskip
\noindent{\bf Step 2: A coarse global bound.}
We claim that
\begin{equation}\label{eq:M-large-target}
\Mscr_{\bar\ell,\mathfrak m}\le\bar\ell
\quad\text{in }\R^n \quad \text{ for any } \mathfrak m>0.
\end{equation}
Indeed, we firstly observe that, the supremum of $\Mscr_{\bar\ell,\mathfrak m}$ is finite.  Indeed, on $\{w\le T\}$ the relative-gradient estimate \eqref{eq:gradient-w} and the  formula \eqref{eq:Q-algebraic} bound $Q_w$ and hence $\Mscr_{\bar\ell,\mathfrak m}$. Note that on $\{w>T\}$, the bound follows from \eqref{eq:large-ell-escape} up to increasing $T$ if necessary.  

Assume now that
$$S:=\sup_{\R^n}\Mscr_{\bar\ell,\mathfrak m}>\bar\ell,$$
and choose
$x_j$ with $\Mscr_{\bar\ell,\mathfrak m}(x_j)\to S$.  Then 
\eqref{eq:large-ell-escape} yields that $w(x_j)$ must be bounded from above, hence $V(x_j)=w(x_j)^{-a}$ is bounded below by a positive constant $\delta_0>0$.  Define
$$
V_j(x):=V(x+x_j) \quad \text{ and } \quad w_j(x):=w(x+x_j).
$$
For each fixed $R>0$, applying the  Harnack inequality in Lemma~\ref{lem:relative-gradient} together with $V_j(0)\ge\delta_0$ gives
$$
\inf_{B_{2R}}V_j\ge c_R>0.
$$
Moreover,
$$
0<V_j\le1, \qquad
\|g_\tau(V_j)\|_{L^\infty(B_{2R})}\le\Lambda.
$$
Thus, Lemma~\ref{lem:uniform-C1-compactness}  gives
$$
\sup_j\|V_j\|_{C^{1,\beta}(B_R)}\le C_R.
$$
After passing to a  subsequence,
$$
V_j\to V_\infty\qquad\text{in }C^1_{\rm loc}(\R^n).
$$
Passing to the weak formulation shows that
$-\Delta_pV_\infty=g_\tau(V_\infty)$ in $\R^n$.
Since $V_j\ge c_R$ on each fixed ball,
$$
w_j=V_j^{-1/a}\to w_\infty=V_\infty^{-1/a} \qquad\text{in }C^1_{\rm loc}(\R^n).
$$
Define
$$
\mathscr M_j:=Q_{w_j}\phi_{\bar\ell,\mathfrak m}(w_j).
$$
Then, locally uniformly,
$$
Q_{w_j}\to Q_\infty:=Q_{w_\infty},
\qquad \mathscr M_j\to \mathscr M_\infty
:= Q_\infty\phi_{\bar\ell,\mathfrak m}(w_\infty).
$$
Since $\mathscr M_j\le S$ in $\R^n$ and
$\mathscr M_j(0)\to S$, we have
\begin{equation}\label{eq:translated-attained-maximum}
\mathscr M_\infty(0)=S, \qquad \mathscr M_\infty\le S
\quad\text{in }\ \R^n.
\end{equation}

Since $S>\bar\ell$, choose a ball $D$ centered at the origin such that
$$
\overline D\subset \subset \{\mathscr M_\infty>\bar\ell\}.
$$
Since $0<\phi_{\bar\ell,\mathfrak m}\le1$,
$$
Q_\infty>\bar\ell \quad\text{on }\ \overline D.
$$
Consequently, $|\nabla w_\infty|$ is bounded away from zero on
$\overline D$, the coefficient matrix is uniformly elliptic, and the
drift is bounded. Furthermore,
$$
c_1:= \min_{\overline D} \mathfrak a_{\bar\ell,\mathfrak m}(w_\infty) Q_\infty(Q_\infty-\bar\ell)>0.
$$
Hence, after discarding the nonnegative square and singular-measure
terms,
$$
\Lscr_{w_\infty}\mathscr M_\infty -\mathbf b_\infty\cdot\nabla\mathscr M_\infty
\ge c_2>0  \quad\text{in }D
$$
in the weak sense. Lemma~\ref{lem:maximum-exclusion} then contradicts
the interior maximum in
\eqref{eq:translated-attained-maximum}. This proves our claim
\eqref{eq:M-large-target}.

Observe that the argument above holds for every $\mathfrak m>0$.  Fix $x$ and let
$\mathfrak m\to\infty$.  By \eqref{eq:m-to-infinity},
$$\phi_{\bar\ell,\mathfrak m}(w(x))\to 1,$$ and
\eqref{eq:M-large-target} gives
\begin{equation}\label{eq:coarse-Q-bound}
Q_w\le\bar\ell\quad\text{globally}.
\end{equation}

\medskip
\noindent{\bf Step 3: An upper bound with arbitrary $\ell>c_{\tau,0}$.}
Now fix an arbitrary $\ell>c_{\tau,0}$.
Suppose, for contradiction, that $Q_w(x_0)>\ell$ at some point $x_0\in\R^n$.
Since $w(x_0)<\infty$, the convergence \eqref{eq:m-to-infinity} allows us to choose $\mathfrak m>0$ sufficiently large that
\begin{equation}\label{eq:chosen-m-positive-level}
Q_w(x_0) \phi_{\ell,\mathfrak m}(w(x_0)) >\ell.
\end{equation}

Recall that, for this fixed $\mathfrak m$,
$\theta_{\ell,\mathfrak m}(t)\to\theta_{\ell,\infty}>0$
as $t\to\infty$ by \eqref{eq:theta-infinity}.
Consequently,
$$
\phi_{\ell,\mathfrak m}(t)\to 0  \qquad\text{as }t\to\infty.
$$
Together with the coarse bound \eqref{eq:coarse-Q-bound}, this gives
$$
0\le \bigl( Q_w\phi_{\ell,\mathfrak m}(w) \bigr)_+ \le  \bar\ell\, \phi_{\ell,\mathfrak m}(w) \to 0
$$
along every sequence for which $w\to\infty$, where for a real-valued function $F$, its positive part is denoted by
$$
    F_+(x):=\max\{F(x),0\}.
$$

Define
$$
S_\ell:=\sup_{\R^n} \mathscr M_{\ell,\mathfrak m}.
$$
By \eqref{eq:chosen-m-positive-level}, one has $S_\ell >\ell$.
Then a maximizing sequence for
$\mathscr M_{\ell,\mathfrak m}$ cannot satisfy $w\to\infty$;
hence $w$ remains bounded along a subsequence.
Repeating the  compactness argument under the translation used in  Step 2 above, we obtain an entire limiting solution for which the limiting multiplier attains an interior maximum $S_\ell>\ell$.

Choose a ball compactly contained in $\{\mathscr M_{\ell,\mathfrak m}>\ell\}$.
On this ball the operator is uniformly elliptic, the drift is bounded, and the  inequality \eqref{eq:completed-square} has a strictly positive zeroth-order density since
$$
\mathfrak a_{\ell,\mathfrak m}>0, \qquad Q_w(Q_w-\ell)>0.
$$
Thus Lemma~\ref{lem:maximum-exclusion} gives a contradiction, and  no point $x_0$ with $Q_w(x_0)>\ell$ exists. Hence
$$   Q_w\le\ell   \qquad\text{in }\R^n.$$
Finally, by letting $\ell\to (c_{\tau,0})^+$, we obtain \eqref{eq:Qw-upper}.
\end{proof}

\subsection{Tangent rigidity and completion of the  classification}\label{sec:classification-end}

Subtracting the  identity \eqref{eq:Q-algebraic} from the bound \eqref{eq:Qw-upper} in Proposition~\ref{prop:Q-bound}, and using the definition of $\mathfrak k_\tau$, we obtain a sharp first-order inequality
\begin{equation}\label{eq:sharp-gradient-w}
b_p|\nabla w|^p \le w\int_1^w\frac{c_\tau(t)}{t^2}\dd t
\quad\text{in }\R^n.
\end{equation}
The inequality is sharp at the minimum level $w=1$.  Indeed,
continuity of $c_\tau$ at $1$ gives
\begin{equation}\label{eq:sharp-gradient-asymptotic}
    w\int_1^w\frac{c_\tau(t)}{t^2}\,\dd t
    = c_{\tau,0}(w-1)+o(w-1)  \qquad\text{as } \ w\to 1^+.
\end{equation}

Recall that Ou obtains an explicit transformed profile after proving global vanishing of the trace-free stress tensor; see
\cite[(3.34)]{O2025} as well as the paragraph following it.
Here we use a different closure mechanism. First of all, we prove that this asymptotic inequality determines a unique $p'$-homogeneous tangent at every minimum point.  The subsequent lemmas then propagate the resulting equality away from the minimum.   

\begin{lem}\label{lem:unique-tangent}
Let $x_*\in\R^n$ satisfy $w(x_*)=1$, and define
\begin{equation}\label{eq:K-tau}
    K_\tau :=\frac1{p'}\left(\frac{c_{\tau,0}}n\right)^{1/(p-1)}.
\end{equation}
Then
\begin{equation}\label{eq:tangent-C1}
    \frac{w(x_*+rx)-1}{r^{p'}} \to K_\tau|x|^{p'} \qquad\text{in }C^1_{\loc}(\R^n)
\end{equation}
as $r\to 0$. In particular, there exists $r_0>0$ such that
$$
\nabla w(x)\ne0 \qquad \text{whenever }\ 0<|x-x_*|<r_0.
$$
Thus $x_*$ is the only critical point of $w$ in $B_{r_0}(x_*)$.
\end{lem}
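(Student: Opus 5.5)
The plan is a blow-up at the minimum point followed by a rigidity argument for the limit. For $r>0$ set
$$
w_r(x):=\frac{w(x_*+rx)-1}{r^{p'}}\ge0,\qquad w_r(0)=0 .
$$
Because $(p'-1)(p-1)=1$, one has $\nabla w_r(x)=r^{1-p'}\nabla w(x_*+rx)$ and $\operatorname{div}\Acal(\nabla w_r)(x)=P_w(x_*+rx)$. The transformed equation \eqref{eq:transformed-equation} therefore gives
$$
\Delta_p w_r(x)=\frac{b_p|\nabla w|^p+c_\tau(w)}{w}\Big|_{x_*+rx}
$$
weakly. I will first show that this right-hand side tends to $c_{\tau,0}$ locally uniformly, and that $w_r$ is locally bounded uniformly in $r$.

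\textbf{Uniform bounds.} These come from the sharp inequality \eqref{eq:sharp-gradient-w} together with \eqref{eq:sharp-gradient-asymptotic}. Using $|\nabla w|^p=r^{p'}|\nabla w_r|^p$ and $w-1=r^{p'}w_r$, the inequality reads
$$
|\nabla w_r|^p\le \frac{c_{\tau,0}+o(1)}{b_p}\,(1+r^{p'}w_r)\,w_r ,
$$
where $o(1)\to0$ as $w\to1$. Hence the function $f_r:=w_r^{1/p'}$ satisfies $|\nabla f_r|\le C$ wherever $w_r>0$. Since $f_r(0)=0$, we get $w_r(x)\le C|x|^{p'}$ on any fixed ball once $r$ is small. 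A continuity argument is needed here, because the $o(1)$ requires $w$ to be close to $1$; on $B_{Rr}(x_*)$ this holds for small $r$ by continuity of $w$.

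With $w_r$ bounded, $|\nabla w|\to0$ and $w\to1$ on $B_{Rr}(x_*)$, so the right-hand side is uniformly bounded and converges to $c_{\tau,0}$, using continuity of $c_\tau$ at $1$. Lemma~\ref{lem:uniform-C1-compactness} then gives $C^{1,\beta}$ bounds and subsequential $C^1_{\loc}$ convergence $w_{r_k}\to W$. The limit satisfies
$$
W\ge0,\qquad W(0)=0,\qquad \Delta_pW=c_{\tau,0},\qquad |\nabla W|^p\le\frac{c_{\tau,0}}{b_p}W\quad\text{in }\R^n .
$$

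\textbf{Rigidity of the limit.} Let $\Phi:=K_\tau|x|^{p'}$. A direct computation with $n/b_p=p'$ shows that $\Delta_p\Phi=c_{\tau,0}$ and $|\nabla\Phi|^p=\frac{c_{\tau,0}}{b_p}\Phi$, so $\Phi$ attains equality in the gradient bound. The bound $|\nabla W^{1/p'}|\le\bigl(c_{\tau,0}/b_p\bigr)^{1/p}/p'=K_\tau^{1/p'}$ integrates from $0$ to give $W\le\Phi$. Now integrate $\Delta_pW=c_{\tau,0}$ over $B_R$, which is legitimate since $W\in C^1$ and the flux identity holds for a.e.\ $R$ by Fubini:
$$
c_{\tau,0}|B_R|=\int_{\partial B_R}|\nabla W|^{p-2}\partial_\nu W
\le\int_{\partial B_R}\Bigl(\tfrac{c_{\tau,0}}{b_p}W\Bigr)^{1/p'}
\le\int_{\partial B_R}\Bigl(\tfrac{c_{\tau,0}}{b_p}\Phi\Bigr)^{1/p'}
=\int_{\partial B_R}|\nabla\Phi|^{p-1}=c_{\tau,0}|B_R| .
$$
Equality throughout forces $W=\Phi$ on a.e.\ sphere, hence everywhere by continuity. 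The limit is thus independent of the subsequence, which yields the full convergence \eqref{eq:tangent-C1}.

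\textbf{Isolation of the critical point.} On the annulus $\{1/2\le|x|\le1\}$ one has $|\nabla\Phi|\ge c>0$. By $C^1$ convergence, $\nabla w_r\neq0$ there for all $r<r_0$, and rescaling gives $\nabla w(x)\neq0$ for $0<|x-x_*|<r_0$.

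\textbf{Main obstacle.} I expect the hardest step to be the uniform local bound on $w_r$. The sharp gradient inequality is only asymptotically $c_{\tau,0}(w-1)$, and one must make sure the scaled region stays where $w$ is near $1$. I would handle this with the a priori estimate \eqref{eq:gradient-w} to get a crude local control first, and then bootstrap via the $f_r$-Lipschitz bound.
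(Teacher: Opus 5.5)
Your proposal is correct and follows essentially the same route as the paper. Both arguments blow up at the minimum and use a Lipschitz bound for $(w-1)^{1/p'}$ from \eqref{eq:sharp-gradient-w} to get uniform local bounds and $C^1$ compactness; both then prove the cone bound $W\le K_\tau|x|^{p'}$, force equality in the flux identity over spheres, and use the annulus argument for isolation. The only minor difference is that you read off $W=K_\tau|x|^{p'}$ directly from the equality case of the cone bound on each sphere, whereas the paper uses the Cauchy--Schwarz equality case to get $\Acal(\nabla W)=\frac{c_{\tau,0}}{n}x$ and then inverts the stress map and integrates; both work.
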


\begin{proof}
Since $w\ge1$ and $w(x_*)=1$, the point $x_*$ is a
global minimum and $\nabla w(x_*)=0$.
Set $y(x):=w(x)-1.$
By continuity of $w$ and $c_\tau$, there exist $\rho,\delta,C>0$ such that
$$
0\le y\le\delta \quad\text{and}\quad
w\int_1^w\frac{c_\tau(t)}{t^2}\,\dd t \le Cy
\qquad\text{in }B_\rho(x_*).
$$
Hence \eqref{eq:sharp-gradient-w} gives
$$
|\nabla y|^p\le Cy \qquad\text{in }B_\rho(x_*).
$$

For $\epsilon>0$, define
$$
\Psi_\epsilon := (y+\epsilon)^{1/p'} -\epsilon^{1/p'}.
$$
Then
$$
|\nabla\Psi_\epsilon|^p
=\left(\frac1{p'}\right)^p (y+\epsilon)^{-1}|\nabla y|^p \le C\frac{y}{y+\epsilon} \le C.
$$
Since $\Psi_\epsilon(x_*)=0$, integration along line
segments in $B_\rho(x_*)$ gives
$$
\Psi_\epsilon(x)\le C|x-x_*|.
$$
Letting $\epsilon\to 0$, we obtain
\begin{equation}\label{eq:w-minimum-growth}
    0\le w(x)-1\le C|x-x_*|^{p'}
\end{equation}
for $x$ sufficiently close to $x_*$.

For $r>0$, define
$$
W_r(x):=\frac{w(x_*+rx)-1}{r^{p'}}.
$$
Then the transformed equation \eqref{eq:transformed-equation} becomes
\begin{equation}\label{eq:Wr-equation}
 \Delta_pW_r  =\frac{b_pr^{p'}|\nabla W_r|^p
+c_\tau(1+r^{p'}W_r)}{1+r^{p'}W_r}.
\end{equation}
The growth bound \eqref{eq:w-minimum-growth} bounds $W_r$ on each fixed ball.  Dividing  the scaled version of \eqref{eq:sharp-gradient-w} by $r^{p'}$, we obtain
\begin{equation}\label{eq:W-upper}
b_p  |\nabla W_r|^p \le \frac{1+r^{p'}W_r}{r^{p'}}\int_1^{1+r^{p'}W_r}\frac{c_\tau(t)}{t^2}\, \dd t \le C_R W_r     \qquad\text{on }\  B_R \ \text{ with }\  R>0
\end{equation}
for all sufficiently small $r$.
Hence, on every fixed ball $B_{2R}$, both $W_r$ and the right-hand side of \eqref{eq:Wr-equation} are uniformly bounded for all sufficiently small $r$.  Writing
$$
-\Delta_pW_r=f_r,
\qquad f_r:=-\frac{b_pr^{p'}|\nabla W_r|^p
+c_\tau(1+r^{p'}W_r)}{1+r^{p'}W_r},
$$
we have
$$
\sup_{0<r<r_R}\|f_r\|_{L^\infty(B_{2R})}<\infty,
$$
and Lemma~\ref{lem:uniform-C1-compactness} therefore makes
$\{W_r\}_{0<r<r_R}$ precompact in $C^1(B_R)$.

Take a sequence $r_j\to 0$ and let $W$ be a subsequential limit.  Then
\begin{equation}\label{eq:tangent-limit-system}
 \Delta_p W=c_{\tau,0},
 \qquad  W\ge0,  \qquad  W(0)=0,
 \qquad  b_p|\nabla W|^p\le c_{\tau,0}W  \quad \text{in } \ \R^n.
\end{equation}
Indeed, the equation follows from \eqref{eq:Wr-equation}. To obtain the last inequality, we divide \eqref{eq:sharp-gradient-w} by $r_j^{p'}$ and use the identity $c_\tau(1)=c_{\tau,0}$ to get that
$$
 \frac{1}{r_j^{p'}}\bigl(1 + r_j^{p'} W_{r_j}\bigr)
 \int_1^{1 + r_j^{p'} W_{r_j}} \frac{c_\tau(t)}{t^2}\,\dd t  \;\to\;  c_{\tau,0}\,W  \quad \text{locally uniformly.}
$$

The differential inequality in \eqref{eq:tangent-limit-system} needs to be used through an explicit zero-set regularization.  Towards this, we fix $\epsilon>0$ and define
$$
\Phi_\epsilon := (W+\epsilon)^{1/p'} -\epsilon^{1/p'}.
$$
Using the last inequality in
\eqref{eq:tangent-limit-system}, we obtain
\begin{align*}
|\nabla\Phi_\epsilon|^p &=\left(\frac 1 {p'}\right)^p (W+\epsilon)^{-1}|\nabla W|^p\\
&\le\left(\frac 1 {p'} \right)^p \frac{c_{\tau,0}}{b_p} \frac{W}{W+\epsilon} \le \left(\frac1{p'}\right)^p \frac{c_{\tau,0}}{b_p}.
\end{align*}
Since $\Phi_\epsilon(0)=0$, the Lipschitz estimate stated above implies that
$$
\Phi_\epsilon(x) \le \frac{1}{p'}\left(\frac{c_{\tau,0}}{b_p}\right)^{1/p}\lvert x\rvert.
$$
Equivalently, by raising both sides of this inequality to the power $p'$, we obtain
$$
\lvert \Phi_\epsilon(x)\rvert^{p'} \le \left(\frac{1}{p'}\right)^{p'}\left(\frac{c_{\tau,0}}{b_p}\right)^{1/(p-1)}\lvert x\rvert^{p'}.
$$
Letting $\epsilon\to 0$ gives
\begin{equation}\label{eq:tangent-upper-cone}
W(x)\le K_\tau|x|^{p'}.
\end{equation}
The constant in \eqref{eq:tangent-upper-cone} is exactly
\eqref{eq:K-tau} as $b_p=n/p'$.

We next show that the upper bound is indeed attained. To begin with, note that the  gradient estimate in \eqref{eq:tangent-limit-system} and \eqref{eq:tangent-upper-cone} imply
\begin{equation}\label{eq:tangent-stress-bound}
|\Acal(\nabla W(x))|=|\nabla W|^{p-1}\le \left(\frac{p'}{n} c_{\tau,0}\right)^{\frac {p-1}{p}} W^{\frac {p-1}{p}} \le\frac{c_{\tau,0}}n|x| \qquad\text{for almost every } x\in \R^n.
\end{equation}

For every $R>0$, let
$$
\nu(x):=\frac{x}{R} \qquad (x\in\partial B_R)
$$
denote the outward unit normal.
Testing the weak equation \eqref{eq:tangent-limit-system} with radial cutoffs converging to $\mathbf1_{B_R}$ and using the coarea formula yields that, for almost every $R>0$,
\begin{equation}\label{eq:tangent-flux}
 \int_{\partial B_R}\Acal(\nabla W)\cdot\nu\dd \mathcal H^{n-1}
 =c_{\tau,0}|B_R|  =\frac{c_{\tau,0}}nR|\partial B_R|.
\end{equation}

Observe that, on $\partial B_R$, \eqref{eq:tangent-stress-bound} gives
$$
\Acal(\nabla W)\cdot\nu \le |\Acal(\nabla W)|\le \frac{c_{\tau,0}}nR.
$$
Since the integral in \eqref{eq:tangent-flux} equals the
integral of the last upper bound, the nonnegative function
$$
\frac{c_{\tau,0}}nR -\Acal(\nabla W)\cdot\nu
$$
has zero integral over $\partial B_R$.  Hence
$$
\Acal(\nabla W)\cdot\nu =\frac{c_{\tau,0}}nR
$$
for $\mathcal H^{n-1}$-almost every point of
$\partial B_R$.
Therefore, the equality must hold in both inequalities above. The equality case of the Cauchy--Schwarz inequality gives
$$
\Acal(\nabla W) = \frac{c_{\tau,0}}nR\,\nu
= \frac{c_{\tau,0}}n x
$$
almost everywhere on $\partial B_R$.
Since this holds for almost every $R>0$, polar-coordinate
Fubini gives
\begin{equation}\label{eq:tangent-stress-identity}
\Acal(\nabla W) =\frac{c_{\tau,0}}n x \qquad \text{ almost everywhere in }\R^n.
\end{equation}

Moreover, thanks to 
$$
\Acal(\xi)=|\xi|^{p-2}\xi, \qquad \Acal^{-1}(\zeta)=|\zeta|^{p'-2}\zeta,
$$
the identity \eqref{eq:tangent-stress-identity} yields
$$
\nabla W(x) =\left(\frac{c_{\tau,0}}n\right)^{1/(p-1)}
    |x|^{p'-2}x
$$
almost everywhere.  As $W\in C^1(\R^n)$, this identity actually holds everywhere.  Integrating along the segment from $0$ to $x$, and using $W(0)=0$, we finally arrive at
$$
W(x) = \frac1 {p'} \left(\frac{c_{\tau,0}}n\right)^{1/(p-1)} |x|^{p'} = K_\tau|x|^{p'}.
$$
Since every convergent subsequence has the same limit and the
family $\{W_r\}$ is locally precompact in $C^1$, the
convergence in \eqref{eq:tangent-C1} holds for the full family
as $r\to 0$.

Finally, on the annulus
$$
    A:=\{x\in\R^n:1/2\le|x|\le2\},
$$
the gradient of $K_\tau|x|^{p'}$ is bounded away from zero.
Hence the $C^1(A)$-convergence gives
$\nabla W_r\ne 0$ on $A$
for every sufficiently small $r$.
Then rescaling back to $w$ shows that
$$
 \nabla w(x)\ne 0 \qquad \text{whenever }0<|x-x_*|<r_0
$$
for some $r_0>0$.
\end{proof}

Define the nonnegative rigidity defect
\begin{equation}\label{eq:defn-Zw}
Z_w:=c_{\tau,0}-Q_w\ge0.
\end{equation}

\begin{lem}
\label{lem:local-Z-vanishing}
If $w(x_*)=1$, then there exists $r_*>0$ such that
$$
    Z_w\equiv0 \qquad\text{in }B_{r_*}(x_*).
$$
\end{lem}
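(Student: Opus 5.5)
The plan is to read the weak Bochner identity as a statement about $Z_w$. In the punctured ball $B_{r_0}(x_*)\setminus\{x_*\}$ it says that $Z_w$ is a nonnegative supersolution of a linear divergence-form equation. We then remove the puncture and apply a strong minimum principle at the point $x_*$, where $Z_w$ vanishes. First, $Z_w(x_*)=0$: since $\nabla w(x_*)=0$ and $w(x_*)=1$, formula \eqref{eq:Q-algebraic} gives $Q_w(x_*)=\mathfrak k_\tau(1)=c_{\tau,0}$. Next, Lemma~\ref{lem:unique-tangent} gives $B_{r_0}(x_*)\setminus\{x_*\}\subset\Omega_{\mathrm{reg}}$. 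On that set Proposition~\ref{prop:bochner} yields
\[
\Lscr_w Z_w=-\Lscr_wQ_w\le0
\]
as measures. So $Z_w\ge0$ is a weak supersolution of $-\operatorname{div}(w^{2-n}\mathsf A_w\nabla Z_w)\ge0$ there, with no drift term. This is the point where we use the unweighted Bochner identity rather than \eqref{eq:completed-square}.

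Second, we identify the degeneracy of the coefficients at $x_*$ from the tangent convergence \eqref{eq:tangent-C1}. Writing $\rho=|x-x_*|$, the $C^1$ convergence on the annulus $\{1/2\le|x|\le2\}$ under rescaling gives $|\nabla w(x)|\simeq \rho^{p'-1}=\rho^{1/(p-1)}$ uniformly for $0<\rho<r_0$, after shrinking $r_0$. Since $w\simeq1$ near $x_*$, this yields
\[
c\,\rho^{\gamma}|\xi|^2\le w^{2-n}\,\xi\cdot\mathsf A_w\xi\le C\rho^{\gamma}|\xi|^2,\qquad \gamma:=\frac{p-2}{p-1}\in(-1,1).
\]
The weight $\rho^\gamma$ is a Muckenhoupt $A_2$ weight, because $-n<\gamma<n$. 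We would also record that $n+\gamma>2$ when $n\ge2$ and $p>1$, which is exactly the condition for a single point to have zero weighted $(2,\rho^\gamma)$-capacity. In the model operator, the radial solutions behave like $\rho^{2-n-\gamma}$, which blows up at $0$.

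Third, and this is the step I expect to be the main obstacle, we show that $x_*$ is removable. The claim is that $Z_w$ is a weak supersolution of the degenerate equation in the whole ball $B_{r}(x_*)$, with $Z_w\in W^{1,2}(B_r(x_*);\rho^\gamma\dd x)$. For the integrability, we would use the Caccioppoli inequality for bounded supersolutions on dyadic annuli. Here $Z_w$ is bounded, because $0\le Z_w\le c_{\tau,0}-\mathfrak k_\tau(w)$ stays bounded near $x_*$. Scaling then gives $\int_{\{\rho\sim 2^{-k}\}}\rho^\gamma|\nabla Z_w|^2\lesssim 2^{-k(n+\gamma-2)}$, which is summable. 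For the supersolution inequality across the puncture, we test against $\eta(1-\chi_\epsilon)$, where $\chi_\epsilon$ is a log-cutoff or capacitary cutoff at $x_*$ with $\int\rho^\gamma|\nabla\chi_\epsilon|^2\to0$. If the weighted capacity estimate turns out to be delicate, a fallback is to avoid removability altogether. In that version we compare $Z_w$ directly on annuli $\{\delta<\rho<r\}$ with the explicit radial supersolutions of the model operator, namely $m\bigl(1-(\rho/r)^{\kappa}\bigr)$ for small $\kappa>0$. The tangent convergence keeps these supersolutions of $\Lscr_w$ for small $r$, and the lower-order error terms are absorbed. The inner boundary $\rho=\delta$ is then handled by letting $\delta\to0$, since the boundary values there are nonnegative.

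Finally, once $Z_w$ is a nonnegative weak supersolution on $B_r(x_*)$ of a divergence-form operator with $A_2$-degenerate ellipticity, we apply the Fabes--Kenig--Serapioni weak Harnack inequality. This is the degenerate analogue of Lemma~\ref{lem:linear-weak-harnack-minimum}. Since $Z_w$ is continuous and $Z_w(x_*)=0$, it gives $\inf_{B_{r/2}}Z_w=0$ and hence $\bigl(\fint_{B_{r/2}} Z_w^{\theta}\,\rho^\gamma\dd x\bigr)^{1/\theta}=0$. So $Z_w\equiv0$ on $B_{r_*}(x_*)$ with $r_*=r/2$, which proves the lemma.
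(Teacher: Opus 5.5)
There is a genuine gap in your third step, and it is not a technicality.

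\textbf{The exponent claims fail.} With $\gamma=\frac{p-2}{p-1}=2-p'$, your claims $\gamma\in(-1,1)$, $\gamma>-n$ and $n+\gamma>2$ are false in general.
\begin{itemize}
\item $\gamma\le-1$ for $p\le\frac32$.
\item $\gamma\le -n$ for $p\le\frac{n+2}{n+1}$. There $\rho^\gamma$ is not in $A_2$, and not even locally integrable.
\item $n+\gamma>2$ is equivalent to $p'<n$, i.e. $p>\frac{n}{n-1}$. In particular it never holds for $n=2$, where $1<p<2$.
\end{itemize}

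\textbf{Removability cannot hold in this regime.} For $p\le\frac n{n-1}$ your dyadic Caccioppoli sums $\sum_k 2^{-k(n+\gamma-2)}$ diverge. For $p<\frac n{n-1}$ the point $x_*$ has positive weighted capacity, so there is no capacitary cutoff. Worse, the conclusion you want does not follow from the information you use. For the model operator $\operatorname{div}(\rho^\gamma\nabla\,\cdot)$, the function $\rho^{2-n-\gamma}=\rho^{p'-n}$ is a nonnegative solution in the punctured ball, continuous with value $0$ at the center, and not identically zero. So ``nonnegative supersolution on $B_r(x_*)\setminus\{x_*\}$ and $Z_w(x_*)=0$'' cannot force $Z_w\equiv0$ when $p'>n$.

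\textbf{The fallback does not close the gap either.} The functions $m(1-(\rho/r)^\kappa)$ are supersolutions, so they cannot bound the supersolution $Z_w$ from below. Even with a correct subsolution, annular comparison with inner boundary value $0$ only gives, as $\delta\to0$, a lower bound of order $(\rho/r)^{p'-n}$. That bound is perfectly compatible with $Z_w(x_*)=0$.

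\textbf{The missing ingredient is quantitative.} The $C^1$ tangent expansion from Lemma~\ref{lem:unique-tangent} gives $Z_w(x)=o(|x-x_*|^{p'})$. The order-$|x|^{p'}$ terms cancel because $b_p(K_\tau p')^p=c_{\tau,0}K_\tau$, together with \eqref{eq:sharp-gradient-asymptotic}. The paper then argues as follows.
\begin{itemize}
\item It uses the barrier $\psi=(w-1)^\sigma$ with $\max\{0,1-b_p\}<\sigma<1$. This satisfies $\Lscr_w\psi>0$ on a punctured ball. The lower bound $\sigma>1-b_p$ is exactly $\sigma p'>p'-n$, i.e. the barrier decays more slowly than the model solution $\rho^{p'-n}$.
\item If $Z_w\not\equiv0$, the strong minimum principle on the connected punctured ball gives $Z_w>0$ there.
\item Comparison on $B_{r_1}\setminus\overline B_\epsilon$ with $\lambda(\psi-\max_{\partial B_\epsilon}\psi)$, followed by $\epsilon\to0$, yields $Z_w\ge\lambda(w-1)^\sigma\sim\lambda K_\tau^\sigma|x|^{\sigma p'}$.
\item This contradicts $Z_w=o(|x|^{p'})$, since $\sigma<1$.
\end{itemize}
This argument works for every $1<p<n$.

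Your removability plus weak-Harnack route is plausible for $p>\frac n{n-1}$. It cannot cover the full range without this vanishing-rate argument.
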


\begin{proof}
Translate $x_*$ to the origin. We claim that Lemma~\ref{lem:unique-tangent} and the definition of $Z_w$ give
\begin{equation}\label{eq:Z-little-o}
Z_w(x)=o(|x|^{p'})
\quad \text{ as } \ x\to 0.
\end{equation}
To verify this explicitly, apply \eqref{eq:defect-functions} and \eqref{eq:Q-algebraic} to \eqref{eq:defn-Zw} and get
$$
 Z_w=\int_1^w\frac{c_\tau(t)}{t^2}\dd t -\frac{b_p}{w}|\nabla w|^p.
$$
In addition, the $C^1$ convergence in \eqref{eq:tangent-C1} yields
\begin{equation}\label{eq:w-expansion}
     w(x)-1=K_\tau|x|^{p'}+o(|x|^{p'}),
 \qquad  |\nabla w(x)|^p=(K_\tau p')^p|x|^{p'}+o(|x|^{p'}).
\end{equation}
Since
$$  K_\tau p' =  \left(\frac{c_{\tau,0}}n\right)^{1/(p-1)}
    \quad\text{and}\quad  b_p=\frac n{p'},
$$
one has
$$
b_p(K_\tau p')^p=c_{\tau,0}K_\tau.
$$
Thus by recalling \eqref{eq:sharp-gradient-asymptotic}, the terms of order $|x|^{p'}$ cancel, and \eqref{eq:Z-little-o} follows.

Furthermore, choose
\begin{equation}\label{eq:choose-sigma}
\max\{0,1-b_p\}<\sigma<1 \quad \text{ and }\quad \psi:=(w-1)^\sigma.
\end{equation}
By Lemma~\ref{lem:unique-tangent}, a punctured ball
$B_{r_0}\setminus\{0\}$ lies in the regular set where $\nabla w\neq 0$.  Then a direct chain-rule computation   gives
\begin{equation}\label{eq:Lpsi-formula}
 \Lscr_w\psi = (p-1)\sigma w^{2-n}(w-1)^{\sigma-2}\Bigg[
 (w-1)P_w +\left(\sigma-1+(2-n)\frac{w-1}{w}\right)|\nabla w|^p \Bigg].
\end{equation}
The right-hand side of \eqref{eq:Lpsi-formula} involves only the continuous functions $w$, $\nabla w$, and $P_w$.  Hence it is a continuous function on the punctured ball.  Moreover, by \eqref{eq:transformed-equation} and \eqref{eq:w-expansion}, when $|x|\to 0$,  
$$
 \frac 1 {w-1}\left[(w-1)P_w+ \left( \sigma-1+(2-n)\frac{w-1}{w} \right)|\nabla w|^p \right] \to c_{\tau,0} \left(1+\frac{\sigma-1}{b_p}\right)>0.
$$
Then up to decreasing $r_0$ suitably, we therefore have
\begin{equation}\label{eq:Lpsi-positive}
\Lscr_w\psi>0 \quad\text{in }B_{r_0}\setminus\{0\}.
\end{equation}
On the other hand, Proposition~\ref{prop:bochner} together with \eqref{eq:defn-Zw} gives
\begin{equation}\label{eq:LZ-negative}
\Lscr_wZ_w\le0 \quad\text{as a Radon measure on }B_{r_0}\setminus\{0\}.
\end{equation}

Suppose that $Z_w$ is not identically zero in
$B_{r_0}\setminus\{0\}$.  On every ball compactly contained in
this punctured domain, the matrix
$w^{2-n}\mathsf A_w$ is bounded and uniformly elliptic.
Then the strong minimum principle for distributional supersolutions Lemma~\ref{lem:linear-weak-harnack-minimum}
shows that the zero set of $Z_w$ is relatively open, which is also relatively closed by continuity.
Since $B_{r_0}\setminus\{0\}$ is connected, the assumption
$Z_w\not\equiv 0$ implies
$$
Z_w>0 \qquad\text{in }B_{r_0}\setminus\{0\}.
$$
Fix $0<r_1<r_0$ and put
$m_1:=\min_{\partial B_{r_1}}Z_w>0$.  Choose $\lambda>0$, independent of the inner radius, so small that
$$
\lambda\max_{\partial B_{r_1}}\psi\le\frac{m_1}{2}.
$$
For $0<\epsilon<r_1/2$, set
$M_\epsilon:=\max_{\partial B_\epsilon}\psi$ and
$$
H_\epsilon:= Z_w-\lambda(\psi-M_\epsilon)
\quad\text{in }B_{r_1}\setminus\overline{B}_\epsilon.
$$
On the inner boundary, $\psi-M_\epsilon\le0$, so
$H_\epsilon\ge0$.  On the outer boundary,
$H_\epsilon\ge m_1/2$.  Moreover,
\eqref{eq:Lpsi-positive} and \eqref{eq:LZ-negative} give, on the closed annulus
$A_\epsilon:=B_{r_1}\setminus\overline B_\epsilon$,
$$
\Lscr_wH_\epsilon\le -\lambda c_\epsilon\,\dd x
$$
for some $c_\epsilon>0$.  To avoid any sign convention ambiguity, fix
$\delta>0$ and test this inequality with
$$
\eta_\delta:=(-H_\epsilon-\delta)_+\in W^{1,2}_0(A_\epsilon).
$$
Note that, on $\{H_\epsilon<-\delta\}$, $\nabla\eta_\delta=-\nabla H_\epsilon$, hence
\begin{align*}
0&\le  \int_{A_\epsilon\cap\{H_\epsilon<-\delta\}}
 w^{2-n}\mathsf A_w\nabla H_\epsilon\cdot\nabla H_\epsilon\,\dd x\\
&= -\int_{A_\epsilon} w^{2-n}\mathsf A_w\nabla H_\epsilon\cdot\nabla\eta_\delta\,\dd x
 = \langle \Lscr_wH_\epsilon,\eta_\delta\rangle\\
&\le -\lambda c_\epsilon\int_{A_\epsilon}\eta_\delta\,\dd x\le 0.
\end{align*}
Thus $\eta_\delta=0$ for every $\delta>0$, and $H_\epsilon\ge0$ in $A_\epsilon$. 

Now letting
$\epsilon\to 0$ and using $M_\epsilon\to0$ gives
$$
Z_w\ge\lambda(w-1)^\sigma
\quad\text{in }B_{r_1}\setminus\{0\}.
$$
By the tangent expansion \eqref{eq:w-expansion},
$$
    (w(x)-1)^\sigma  = K_\tau^\sigma|x|^{\sigma p'}(1+o(1)).
$$
Consequently,
$$
  \lambda K_\tau^\sigma \le \frac{Z_w(x)}{|x|^{\sigma p'}} 
  =  o\!\left(|x|^{(1-\sigma)p'}\right),
$$
which is impossible as $x\to 0$ due to $\sigma<1$.
Thus $Z_w\equiv 0$ in a neighborhood of the origin.
\end{proof}

We now prove global rigidity.
\begin{prop}\label{prop:global-rigidity}
One has
\begin{equation}\label{eq:Q-equality-global}
Q_w\equiv c_{\tau,0}
\quad\text{in }\R^n.
\end{equation}
Moreover,
\begin{equation}\label{eq:w-explicit}
w(x)=1+K_\tau|x|^{p'}.
\end{equation}
\end{prop}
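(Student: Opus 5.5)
Since $V(0)=1$, we have $w(0)=1$, so the origin is a minimum point. By Lemma~\ref{lem:local-Z-vanishing} the defect $Z_w=c_{\tau,0}-Q_w$ vanishes on a ball $B_{r_*}(0)$. The plan is to propagate this vanishing to all of $\R^n$ by an open--closed argument and then integrate the resulting first-order identity.

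\textbf{Step 1: $\{Z_w=0\}$ is closed and relatively open away from critical points.} Set $\mathcal Z:=\{x: Z_w=0 \text{ in a neighbourhood of } x\}$. This set is open by definition and nonempty. Take $x_1\in\partial\mathcal Z$. By continuity, $Z_w(x_1)=0$.
\begin{itemize}
\item If $\nabla w(x_1)\neq0$, then on a small ball around $x_1$ we have $\Lscr_wZ_w\le0$ as a measure (Proposition~\ref{prop:bochner}), $Z_w\ge0$, and the operator is uniformly elliptic. Lemma~\ref{lem:linear-weak-harnack-minimum} then gives $Z_w\equiv0$ near $x_1$, a contradiction.
\item If $\nabla w(x_1)=0$, use \eqref{eq:Q-algebraic}: $c_{\tau,0}=Q_w(x_1)=\mathfrak k_\tau(w(x_1))$. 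Since $\mathfrak k_\tau$ is nonincreasing with $\mathfrak k_\tau(1)=c_{\tau,0}$, and in fact strictly decreasing because $\mathfrak k_\tau'=-c_\tau/t^2<0$, this forces $w(x_1)=1$. Lemma~\ref{lem:local-Z-vanishing} then gives $Z_w\equiv0$ near $x_1$, again a contradiction.
\end{itemize}
Hence $\partial\mathcal Z=\emptyset$, and by connectedness $\mathcal Z=\R^n$. This proves \eqref{eq:Q-equality-global}.

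\textbf{Step 2: identify $w$.} With $Q_w\equiv c_{\tau,0}$ we get
$$b_p|\nabla w|^p=w\int_1^w c_\tau(t)t^{-2}\,dt,$$
so critical points occur exactly where $w=1$. The case analysis above also shows that the set $\{w=1\}$ consists of isolated points.

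Next, $\Lscr_wQ_w=0$ on the regular set, so \eqref{eq:bochner} forces $\tr(\mathsf E_w^2)=0$ and $\Xbf_w\cdot D(\mathfrak e_\tau(w))=0$. Thus $\mathsf T_w=0$, which gives $D\Xbf_w=\frac{P_w}{n}\Id$ almost everywhere on $\Omega_{\rm reg}$.

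A vector field whose derivative is a scalar multiple of the identity has a constant scalar factor when $n\ge2$, since $\partial_j(P_w)\delta_{ik}=\partial_k(P_w)\delta_{ij}$ in the weak sense. Hence $P_w$ is locally constant on the connected set $\R^n$ minus isolated points. By \eqref{eq:tangent-C1}, $P_w\to c_{\tau,0}$ near the origin, so $P_w\equiv c_{\tau,0}$ and $\Xbf_w=\frac{c_{\tau,0}}{n}(x-x_0)$ for some $x_0$. The tangent at $0$ forces $x_0=0$. Inverting $\Acal$ gives
$$\nabla w=\left(\frac{c_{\tau,0}}{n}\right)^{1/(p-1)}|x|^{p'-2}x,$$
and integrating from $0$ with $w(0)=1$ yields \eqref{eq:w-explicit}.

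Two alternatives are available at this stage:
\begin{itemize}
\item From $P_w=Q_w+\mathfrak e_\tau(w)$ we get $\mathfrak e_\tau(w)\equiv0$, hence $c_\tau$ is constant on the range of $w$. This can replace the constancy step.
\item Along any ray, the scalar identity becomes an ODE for the radial profile, which could also be integrated directly.
\end{itemize}

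\textbf{Main obstacle.} The main obstacle is the interplay between critical points and degeneracy. One must check that $\{w=1\}$ being discrete does not disconnect the regular set; this holds for $n\ge2$. One must also check that the weak statement $\mathsf T_w=0$ in $W^{1,2}_{\rm loc}$ implies $P_w$ is constant. The approach is to test the distributional identity $\partial_j\Xbf_i=\frac{P_w}{n}\delta_{ij}$ against mixed second derivatives, which shows $\nabla P_w=0$ distributionally on $\Omega_{\rm reg}$. The isolated critical points are then handled by continuity of $\Xbf_w$.
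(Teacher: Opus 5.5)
Your main argument is correct and is essentially the paper's proof: the same open--closed propagation of $Z_w=0$ (strong minimum principle at regular points; $w=1$ plus Lemma~\ref{lem:local-Z-vanishing} at critical points), then $\mathsf T_w=0$ from the Bochner identity, constancy of $P_w$ by commuting mixed derivatives, and integration of $\Xbf_w=\frac{c_{\tau,0}}{n}x$. Working on the connected set $\R^n\setminus\{\nabla w=0\}$ instead of using $|\{\nabla w=0\}|=0$ is an inessential variation. Neither of your two ``alternatives'' can replace the constancy step as written: deducing $\mathfrak e_\tau(w)\equiv0$ from $P_w=Q_w+\mathfrak e_\tau(w)$ presupposes $P_w\equiv c_{\tau,0}$ (the paper uses this only afterwards, for the flatness of $h$), and the ray-ODE route presupposes radial symmetry. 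Since your main line does not use them, the proof stands.
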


\begin{proof}
Let
$$
\mathcal N:=\{x\in\R^n:Z_w(x)=0\}.
$$
The algebraic formula for $Q_w$ and the $C^1$ regularity of $w$ show that $Z_w$ is continuous; hence $\mathcal N$ is closed.  It is nonempty since $w(0)=1$ and $\nabla w(0)=0$, so
$Q_w(0)=\mathfrak k_\tau(1)=c_{\tau,0}$.

We next prove that $\mathcal N$ is open.  Let $x\in\mathcal N$.  When $\nabla w(x)\ne0$, choose a ball compactly contained in the regular set $\Omega_{\rm reg}$.  Then one has $Z_w\ge 0$ there and $\Lscr_wZ_w\le0$ with a uniformly elliptic coefficient matrix.  The strong minimum principle Lemma~\ref{lem:linear-weak-harnack-minimum}, applied at the zero $x$, gives $Z_w\equiv0$ on a smaller ball.  

When $\nabla w(x)=0$, then the   identity \eqref{eq:defect-functions} and $Z_w(x)=0$ give
$$
 0=c_{\tau,0}-\mathfrak k_\tau(w(x))    =\int_1^{w(x)}\frac{c_\tau(t)}{t^2}\dd t.
$$
Since $c_\tau>0$, this forces $w(x)=1$. Then  Lemma~\ref{lem:local-Z-vanishing} with the centered  $x$, again gives a neighborhood contained in
$\mathcal N$.  Thus $\mathcal N$ is both open and closed.  Thus the connectedness of $\R^n$ proves \eqref{eq:Q-equality-global}.

We next exploit equality in the Bochner identity \eqref{eq:bochner}. 
Towards this, we first show that the critical set
$$
    \mathcal C:=\{x\in\R^n:\nabla w(x)=0\}
$$
has Lebesgue measure zero.  If $x\in\mathcal C$, then
\eqref{eq:Q-equality-global} and \eqref{eq:Q-algebraic} give
$$
0=c_{\tau,0}-\mathfrak k_\tau(w(x))
= \int_1^{w(x)}\frac{c_\tau(t)}{t^2}\,\dd t.
$$
Since $c_\tau>0$ and $w\ge1$, it follows that $w(x)=1$.
Lemma~\ref{lem:unique-tangent} therefore shows that every point of $\mathcal C$ is isolated.
Furthermore, the set $\mathcal C$ is closed as $\nabla w$ is continuous. Hence $\mathcal C$ is locally finite and, in particular, $|\mathcal C|=0.$

Fix $U\subset\subset\Omega_{\mathrm{reg}}$.
Thanks to $Q_w\equiv c_{\tau,0}$, one has
$$\Lscr_w Q_w=0 \quad \text{ in }\ U.$$
Then Proposition~\ref{prop:bochner}  gives
$$
0= n(p-1)w^{1-n}\tr(\mathsf E_w^2)\,\dd x +
(n-1)(p-1)w^{1-n} \Xbf_w\cdot D(\mathfrak e_\tau\circ w).
$$
Recall that both of terms on the right are nonnegative Radon measures. Hence
$$
 \tr(\mathsf E_w^2)=0 \qquad\text{almost everywhere in }U.
$$
Then by \eqref{eq:E-square-positive},
$\tr(\mathsf E_w^2)=|\mathsf T_w|^2$. Consequently
$$\mathsf E_w=0 \quad \text{ almost everywhere in} \ U.$$
Exhausting the regular set and using
$|\{\nabla w=0\}|=0$, we obtain
\begin{equation}\label{eq:DX-isotropic}
D\Xbf_w=\frac{P_w}{n}\Id \qquad\text{almost everywhere in }\R^n.
\end{equation}
As $\Xbf_w\in W^{1,2}_{\loc}$, this equality also holds
distributionally.

We next prove that $P_w$ is constant.
Fix $j\in\{1,\dots,n\}$ and choose $i\ne j$, which is possible as $n\ge2$. From \eqref{eq:DX-isotropic}, one gets
$$
\partial_i(\Xbf_w)_i=\frac{P_w}{n} \quad \text{ while } \quad \partial_j(\Xbf_w)_i=0.
$$
Therefore, in the sense of distributions,
$$
\partial_j P_w = n\partial_j\partial_i(\Xbf_w)_i =
n\partial_i\partial_j(\Xbf_w)_i =0.
$$
Since $j$ is arbitrary, $P_w$ is constant, its continuous representative satisfies $P_w(0)=c_{\tau,0}$, and hence
\begin{equation}\label{eq:P-constant}
P_w\equiv c_{\tau,0}.
\end{equation}
Equation \eqref{eq:DX-isotropic} now implies
$$
\Xbf_w(x)=\frac{c_{\tau,0}}n x+\xi_0.
$$
At the minimum point $0$, $\nabla w(0)=0$ and hence
$\Xbf_w(0)=0$; therefore $\xi_0=0$.  Now inverting the stress map gives
$$
\nabla w(x) =\left(\frac{c_{\tau,0}}n\right)^{1/(p-1)}
  |x|^{p'-2}x.
$$
Integration from the origin and $w(0)=1$ give
\eqref{eq:w-explicit}.
\end{proof}

\begin{rem} \label{rem:ou-closure-comparison}
Ou establishes global vanishing of the transformed trace-free stress via a weighted integral estimate; see \cite[equation~(3.34)]{O2025}, together with the subsequent explanatory paragraph. In contrast, our approach is qualitatively different. We first employ the global scalar inequality
$$
Q_w \le c_{\tau,0}
$$
to identify the unique tangent at a minimum point. Next, we use a punctured-ball barrier argument to extend this scalar estimate to a full neighborhood of the minimum; see the proof of Lemma~\ref{lem:local-Z-vanishing}. An open–closed argument then shows that $Q_w \equiv c_{\tau,0}$ globally. Only after this scalar rigidity has been established,  we return to the Bochner identity and deduce that the tensorial defect vanishes identically.
\end{rem}

\begin{proof}[Proof of Theorem~\ref{thm:frozen-classification}]
Proposition~\ref{prop:global-rigidity} gives
$$
 V=w^{-a}
   =\left(1+K_\tau|x|^{p'}\right)^{-a}.
$$
Since $c_{\tau,0}=a^{1-p}\widehat h_\tau$, the constant
$K_\tau$ is precisely $\gamma_\tau$ in \eqref{eq:gamma-tau}.

It remains to prove the flatness assertion for finite $\tau$.  From
\eqref{eq:Q-equality-global} and \eqref{eq:P-constant},
$$
\mathfrak e_\tau(w)=P_w-Q_w=0
\quad\text{throughout }\R^n.
$$
The explicit function $w$ has range $[1,\infty)$, so
$\mathfrak e_\tau(t)=0$ for every $t\ge1$.  Since $\mathfrak e_\tau\equiv0 $ on $[1,\infty)$, its distributional derivative vanishes on $(1,\infty)$.
Thus, the identity \eqref{eq:stieltjes-defect}
implies that the (Stieltjes) measure $\dd c_\tau$ vanishes on $(1,\infty)$.
Hence $c_\tau$ is constant there, and the continuity makes it
constant on $[1,\infty)$. If
$0<\tau<\infty$, formula \eqref{eq:c-tau} says
$h(\tau t^{-a})=h(\tau)$ for every $t\ge1$.  The numbers
$\tau t^{-a}$ fill $(0,\tau]$, proving \eqref{eq:h-flat-below-tau}.
\end{proof}

\section{Proof of the Harnack inequality}

We start with a preparation for the contradiction. The mechanism to select the points below is essentially the same as the one
used in \cite[(4.1)--(4.6)]{QZ2026}. The current proposition strengthens that mechanism by obtaining a sharp local upper bound converging to $1$ and by identifying every amplitude limit through the compact family rather than assuming pure-power classification.

\begin{prop} \label{prop:point-selection}
Suppose that $R_j>0$ and that $u_j>0$ solves
$$
-\Delta_pu_j=g(u_j)\quad\text{in }B_{3R_j}.
$$
Set
$$
M_j:=\max_{\overline{B_{R_j}}}u_j, 
\qquad m_j:=\min_{\overline{B_{2R_j}}}u_j, 
$$
and assume
\begin{equation}\label{eq:selection-assump}
R_j^{n-p}M_jm_j^{p-1}\to \infty.
\end{equation}
Then, up to passing to a subsequence, there are points
$x_j\in B_{2R_j}$ and numbers
$$
A_j:=u_j(x_j), \quad D_j:=\operatorname{dist}(x_j,\partial B_{2R_j}), \quad \sigma_j:=\frac{D_j}{8},\quad 
\delta_j:=A_j^{-p/(n-p)},\quad
\Gamma_j:=\frac{\sigma_j}{\delta_j},
$$
with $\Gamma_j\to\infty$, such that the rescaled functions
\begin{equation}\label{eq:defn-vj}
    v_j(y):=A_j^{-1}u_j(x_j+\delta_jy)
\end{equation}
are defined in $B_{4\Gamma_j}$ and satisfy
\begin{equation}\label{eq:vj-normalization}
v_j(0)=1, \qquad 0<v_j\le2^{n-p}\quad\text{in }B_{4\Gamma_j},
\end{equation}
\begin{equation}\label{eq:vj-equation}
-\Delta_pv_j=g_{A_j}(v_j)=v_j^qh(A_jv_j),
\end{equation}
and
\begin{equation}\label{eq:vj-limit}
v_j\to  U_\tau \quad\text{in }C^1_{\loc}(\R^n)
\end{equation}
for some $\tau\in[0,\infty]$, where
\begin{equation}\label{eq:Utau-normalized}
-\Delta_pU_\tau=g_\tau(U_\tau), \qquad 0<U_\tau\le 1, \qquad U_\tau(0)=1.
\end{equation}

Moreover, for 
\begin{equation}\label{eq:defn-xi}
\sigma_j=\frac{\dist(x_j,\,\partial B_{2R_j})}{8} \quad \text{ and }\quad \Xi_j:=A_j\sigma_j^{n-p}m_j^{p-1},
\end{equation}
one has
\begin{equation}\label{eq:Xi-Gamma}
\Xi_j\to\infty, \qquad 
\Gamma_j^{n-p} =\Xi_j\left(\frac{A_j}{m_j}\right)^{p-1} \ge\Xi_j.
\end{equation}
\end{prop}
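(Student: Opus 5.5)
The plan is a weighted-maximum point selection, with the weight chosen so that the doubling constant $2^{n-p}$ and the quantity $\Xi_j$ come out naturally. Put $d_j(x):=\dist(x,\partial B_{2R_j})$ and
$$
F_j(x):=u_j(x)\,d_j(x)^{n-p},\qquad x\in\overline{B_{2R_j}}.
$$
By Corollary~\ref{cor:C1-regularity}, $u_j$ is continuous on $\overline{B_{2R_j}}\subset B_{3R_j}$. Hence $F_j$ is continuous, vanishes on $\partial B_{2R_j}$, and is positive inside. It therefore attains its maximum at some $x_j\in B_{2R_j}$. Define $A_j,D_j,\sigma_j,\delta_j,\Gamma_j$ as in the statement.

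\emph{Lower bound from maximality.} Every $x\in\overline{B_{R_j}}$ has $d_j(x)\ge R_j$. Maximality then gives $A_jD_j^{n-p}\ge M_jR_j^{n-p}$, so
$$
\Xi_j=8^{-(n-p)}A_jD_j^{n-p}m_j^{p-1}\ge 8^{-(n-p)}R_j^{n-p}M_jm_j^{p-1}\to\infty
$$
by \eqref{eq:selection-assump}.

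\emph{The identity for $\Gamma_j$.} Since $\delta_j^{\,n-p}=A_j^{-p}$, we get $\Gamma_j^{n-p}=\sigma_j^{n-p}A_j^p=\Xi_j(A_j/m_j)^{p-1}$. Because $x_j\in B_{2R_j}$, we have $A_j\ge m_j$, which gives $\Gamma_j^{n-p}\ge\Xi_j\to\infty$ and proves \eqref{eq:Xi-Gamma}.

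\emph{Bounds on the rescaled functions.} Take $y$ with $|y|<4\Gamma_j$, so the point $x_j+\delta_jy$ lies in $B_{D_j/2}(x_j)$. There $d_j\ge D_j-\delta_j|y|$, and maximality of $F_j$ yields
$$
v_j(y)\le \Bigl(\frac{D_j}{D_j-\delta_j|y|}\Bigr)^{n-p}=\Bigl(1-\frac{|y|}{8\Gamma_j}\Bigr)^{-(n-p)}.
$$
This has two consequences:
\begin{itemize}
\item $v_j\le 2^{n-p}$ on $B_{4\Gamma_j}$, which with $v_j(0)=1$ gives \eqref{eq:vj-normalization};
\item $\limsup_j\sup_{B_K}v_j\le 1$ for every fixed $K$, because $\Gamma_j\to\infty$. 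This is the sharp local upper bound tending to $1$.
\end{itemize}

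\emph{The rescaled equation.} A direct scaling computation gives
$$
-\Delta_pv_j=A_j^{1-p}\delta_j^{\,p}\,g(A_jv_j)=\delta_j^{\,p}A_j^{q-p+1}\,g_{A_j}(v_j).
$$
By \eqref{eq:exponent-identities}, $q-p+1=p^2/(n-p)$, so $\delta_j^{\,p}A_j^{q-p+1}=1$. This is \eqref{eq:vj-equation}.

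\emph{Compactness and identification of the limit.} Pass to a subsequence with $A_j\to\tau\in[0,\infty]$.
\begin{itemize}
\item On each fixed ball, $0<v_j\le 2^{n-p}$ and $0\le g_{A_j}(v_j)\le\Lambda 2^{(n-p)q}$. Lemma~\ref{lem:uniform-C1-compactness} and a diagonal argument then give $v_j\to U$ in $C^1_{\loc}(\R^n)$.
\item The main point to check is that $g_{A_j}(v_j)\to g_\tau(U)$ in $L^1_{\loc}$, since Lemma~\ref{lem:frozen-convergence} gives convergence only locally uniformly on $(0,\infty)$. I would use the uniform domination $g_{A_j}(t)\le\Lambda t^q$. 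At points where $U>0$, the values $v_j$ eventually lie in a compact subset of $(0,\infty)$, so Lemma~\ref{lem:frozen-convergence} applies. At points where $U=0$, both sides tend to $0$ by the domination. Dominated convergence finishes this step.
\item Hence $-\Delta_pU=g_\tau(U)$ weakly, with $U\ge0$ and $U(0)=1$. Corollary~\ref{cor:C1-regularity} (Vázquez) gives $U>0$ on $\R^n$. The sharp local bound gives $U\le 1$.
\end{itemize}
Therefore $U=U_\tau$ satisfies \eqref{eq:Utau-normalized}, which is exactly the normalized setting of Theorem~\ref{thm:frozen-classification}, and \eqref{eq:vj-limit} holds.

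I expect the only delicate points to be two: choosing the weight exponent $n-p$, which simultaneously produces $2^{n-p}$, $\Xi_j$ and the limit bound $U\le1$; and the passage to the limit in the source term near the zero level.
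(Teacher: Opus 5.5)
Your proof is correct and follows essentially the same route as the paper: the same weighted maximum of $\dist(x,\partial B_{2R_j})^{n-p}u_j$, the same lower bound $A_jD_j^{n-p}\ge M_jR_j^{n-p}$, the same scaling identity, and the same sharp bound $v_j\le(1-|y|/(8\Gamma_j))^{-(n-p)}$ giving $U_\tau\le 1$. The only difference is how you pass to the limit in the source term. The paper first obtains $v_j\ge c_R>0$ on $B_R$ by a Harnack chain, so that Lemma~\ref{lem:frozen-convergence} gives uniform convergence of $g_{A_j}(v_j)$; you instead use the domination $g_{A_j}(t)\le\Lambda t^q$ with dominated convergence and recover positivity afterwards from the strong minimum principle, which is equally valid.
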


\begin{proof}
The continuous function 
$$
x\mapsto\dist(x,\partial B_{2R_j})^{n-p}u_j(x)
$$
vanishes on
$\partial B_{2R_j}$ and is positive in $B_{2R_j}$. It therefore attains its maximum at some interior point
$x_j\in B_{2R_j}$.
Set
$$
A_j:=u_j(x_j), \qquad
D_j:=\dist(x_j,\partial B_{2R_j}),
\qquad \sigma_j:=\frac{D_j}{8}.
$$
If $y_j\in\overline{B_{R_j}}$ is a maximum point of $u_j$, then $\dist(y_j,\partial B_{2R_j})\ge R_j$.  Hence
\begin{equation}\label{eq:point-selection-lower}
A_jD_j^{n-p}\ge M_jR_j^{n-p}.
\end{equation}
Note that, for each $x\in B_{D_j/2}(x_j)=B_{4\sigma_j}(x_j)$, one has
$\dist(x,\partial B_{2R_j})\ge D_j/2$.  Then the maximality of $x_j$ gives
\begin{equation}\label{eq:point-selection-upper}
u_j(x)\le2^{n-p}A_j \quad\text{in }B_{4\sigma_j}(x_j).
\end{equation}

Define
\begin{equation}\label{eq:defn-Gammaj}
\delta_j:= A_j^{-p/(n-p)},
\qquad \Gamma_j:= \frac{\sigma_j}{\delta_j},
\qquad v_j(y):= A_j^{-1} u_j(x_j+\delta_j y).
\end{equation}
Then \eqref{eq:vj-normalization} follows from the definition of $A_j$ together with \eqref{eq:point-selection-upper}.  Moreover, a change of variable in the weak formulation \eqref{eq:weak-equation-general} yields
$$
 -\Delta_p v_j =\delta_j^p A_j^{1-p} g(A_jv_j)
$$
as $g(s)=s^qh(s)$. 
Since $q-p+1=p^2/(n-p)$ by the third identity of \eqref{eq:exponent-identities}, one gets $\delta_j^p A_j^{q-p+1}=1$ which proves \eqref{eq:vj-equation}.

By \eqref{eq:selection-assump}, \eqref{eq:defn-xi} and \eqref{eq:point-selection-lower},
$$
 \Xi_j =8^{-(n-p)}A_jD_j^{n-p}m_j^{p-1}
 \ge  8^{-(n-p)}M_jR_j^{n-p}m_j^{p-1}\to\infty.
$$
Also, the definition of $\Gamma_j$ in \eqref{eq:defn-Gammaj} gives
$$
 \Gamma_j^{n-p}  =\sigma_j^{n-p}A_j^p
 = 8^{-(n-p)}D_j^{n-p} A_j^p =\Xi_j\left(\frac{A_j}{m_j}\right)^{p-1}.
$$
Since $x_j\in B_{2R_j}$, one has $A_j\ge m_j$, which gives \eqref{eq:Xi-Gamma} and concludes $\Gamma_j\to\infty$.

Fix $K>0$.  For $|y|\le K$ and $K<8\Gamma_j$, 
by triangle inequality
$$
 \dist(x_j+\delta_j y,\,\partial B_{2R_j})\ge D_j-\frac{\sigma_j K}{\Gamma_j} =  D_j\left(1-\frac{K}{8\Gamma_j}\right).
$$
Thus the maximality of $x_j$ yields the bound
$$u_j(x_j+\delta_j y) \operatorname{dist}
\bigl(x_j+\delta_j y,\partial B_{2R_j}\bigr)^{n-p} \le A_j D_j^{n-p},$$
which after scaling gives
\begin{equation}\label{eq:sharp-local-upper}
v_j(y)\le\left(1-\frac{K}{8\Gamma_j}\right)^{-(n-p)}.
\end{equation}

Now up to taking a subsequence, $A_j\to\tau\in[0,\infty]$.
Fix $R>0$.  For all sufficiently large $j$,
$B_{2R}\subset B_{4\Gamma_j}$, and
$$
0<v_j\le2^{n-p},\qquad 
\|g_{A_j}(v_j)\|_{L^\infty(B_{2R})} \le\Lambda\,2^{(n-p)q}.
$$
Lemma~\ref{lem:uniform-C1-compactness} gives a uniform
$C^{1,\beta}(B_R)$ bound.  Moreover, rewriting the equation as
$$
-\Delta_pv_j=b_j(y)v_j^{p-1},
\qquad 0\le b_j(y):=v_j(y)^{q-p+1}h(A_jv_j(y))\le C(n,p,\Lambda),
$$
and applying a finite Harnack chain from the point $v_j(0)=1$ gives
$$
v_j\ge c_R>0\qquad\text{in }B_R.
$$
Up to choosing a suitable subsequence,
$$
v_j\to U_\tau\qquad\text{in }C^1_{\rm loc}(\R^n).
$$
Then Lemma~\ref{lem:frozen-convergence} and
$v_j(B_R)\subset[c_R,2^{n-p}]$ imply
$$
g_{A_j}(v_j)\to g_\tau(U_\tau) \qquad\text{uniformly on }\ B_R.
$$
Passing to the weak formulation yields
$-\Delta_pU_\tau=g_\tau(U_\tau)$.
Letting $j\to\infty$ in \eqref{eq:sharp-local-upper} gives
$U_\tau\le 1$, while $U_\tau(0)=1$ and the strong minimum principle give $U_\tau>0$.
Thus  we conclude \eqref{eq:Utau-normalized}.
\end{proof}

\subsection{The Pohozaev identity and isolated  poles}\label{sec:pole}

Let $D\subset\R^n$ be open, let $f:[0,\infty)\to[0,\infty)$ be continuous, and put
$$
F(s):=\int_0^sf(t)\dd t.
$$
For a $C^1$ weak solution of $-\Delta_pu=f(u)$ in a ball centered at the origin, define
\begin{equation}\label{eq:Pohozaev-flux}
\begin{aligned}
 \Pscr_u(r):=\int_{\partial B_r}\Bigg[
 a u|\nabla u|^{p-2}u_\nu  -\frac r p|\nabla u|^p +r|\nabla u|^{p-2}u_\nu^2 +rF(u) \Bigg]\dd \mathcal H^{n-1},
\end{aligned}
\end{equation}
where $u_\nu=\nabla u\cdot\nu$.

The following proposition is classical, and its proof is standard. A portion of the argument already appears in e.g. \cite[Proposition 3.4]{QZ2026}. For the sake of completeness, we present a full proof in the Appendix.  
\begin{prop} \label{prop:local-Pohozaev}
Let $I\subset\mathbb R$ be an open interval, 
$f\in C(I)$, and $F\in C^1(I)$ satisfy $F'=f$.
Suppose
$$
u\in C^1(B_R)\cap W^{1,p}(B_R), \qquad u(B_R)\subset I,
$$
and that $u$ is a weak solution of
$$
-\Delta_pu=f(u) \qquad\text{in }B_R.
$$  
Then, for every $r\in(0,R)$,
\begin{equation}\label{eq:local-Pohozaev}
\Pscr_u(r) =\int_{B_r}\bigl[nF(u)-a u f(u)\bigr]\dd x.
\end{equation}
If the equation is assumed only in the annulus
$B_s\setminus\overline B_r$, the same identity holds in the form
\begin{equation}\label{eq:annular-Pohozaev}
\Pscr_u(s)-\Pscr_u(r) =\int_{B_s\setminus\overline B_r}
  \bigl[nF(u)-a u f(u)\bigr]\dd x.
\end{equation}
\end{prop}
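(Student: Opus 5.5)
The plan is the classical domain-variation (Pohozaev) argument, carried out in weak form because $u$ is only $C^1$, with the test field $x\cdot\nabla u$ justified through the second-order regularity already used in Lemma~\ref{lem:Q-derivative}. The key steps, in order, are as follows.

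First, by the result of Antonini--Ciraolo--Farina \cite{ACF2023} (the right-hand side $f(u)$ is bounded on compact subsets, since $u$ is continuous and $f\in C(I)$), the stress $\Acal(\nabla u)$ lies in $W^{1,2}_{\loc}(B_R)$. For smooth data the pointwise identity reads
$$
\operatorname{div}\Bigl( (x\cdot\nabla u)\Acal(\nabla u) - \tfrac1p|\nabla u|^p x + F(u)x + a u\,\Acal(\nabla u)\Bigr) = nF(u) - a u f(u).
$$
Here one uses $\nabla(x\cdot\nabla u)=\nabla u + D^2u\,x$, $\Acal(\nabla u)\cdot D^2u\,x = \tfrac1p x\cdot\nabla|\nabla u|^p$, $\operatorname{div}\Acal(\nabla u)=-f(u)$, and the cancellation $|\nabla u|^p - \tfrac np|\nabla u|^p + a|\nabla u|^p = 0$, which is exactly the choice $a=(n-p)/p$. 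Integrating over $B_r$, the boundary flux on $\partial B_r$ (with $\nu=x/r$) is precisely $\Pscr_u(r)$, which gives \eqref{eq:local-Pohozaev}.

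To make this rigorous without $W^{2,2}$ on the critical set, I would regularize instead of differentiating $u$ twice. Take $u_\epsilon$ solving the uniformly elliptic problem $-\operatorname{div}\bigl((\epsilon^2+|\nabla u_\epsilon|^2)^{(p-2)/2}\nabla u_\epsilon\bigr)=f_\epsilon$ on a slightly smaller ball, with mollified $f_\epsilon\to f(u)$ and boundary data $u$. Write $F$-terms as $\int_0^{u}$ along the smooth interpolation, or more simply apply the identity with the vector field above in which $F(u)$, $f(u)$ are replaced by $x\cdot\nabla u\, f(u)$ entering via $\nabla F(u)=f(u)\nabla u$, which only needs $F\in C^1$ and $u\in C^1$.

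The $\epsilon$-identities pass to the limit using uniform $C^{1,\beta}$ bounds (Lemma~\ref{lem:uniform-C1-compactness}), giving $C^1$ convergence of $u_\epsilon$ and hence uniform convergence of all integrands on $\overline B_r$ and $\partial B_r$. Uniqueness of the Dirichlet problem for the limit monotone operator identifies the limit with $u$; the right-hand side is frozen as $f(u)$, so the limit problem is linear in the data. Alternatively, following the Appendix approach, one can test the weak equation directly with $\eta_k(|x|)\,x\cdot\nabla u^{(\delta)}$ using difference quotients. Either way, the main obstacle is this justification of the product rule at degenerate points. The $W^{1,2}$ stress handles it: $x\cdot\nabla u$ need not be Sobolev where $p<2$, but the combination $\Acal(\nabla u)\cdot D^2u\,x=\tfrac1p x\cdot\nabla|\nabla u|^p$ can be checked on $\{\nabla u\neq0\}$, while $\nabla|\nabla u|^p=\nabla\bigl(|\Acal(\nabla u)|^{p'}\bigr)$ vanishes a.e.\ on the critical set.

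Finally, the radial cutoff $\eta_k\to\mathbf 1_{B_r}$ gives the identity at every $r$, since the flux is continuous in $r$ because $u\in C^1$. The annular version \eqref{eq:annular-Pohozaev} follows by the same computation with cutoffs $\eta_k\to\mathbf 1_{B_s\setminus \overline B_r}$, where the two boundary fluxes appear with opposite signs.
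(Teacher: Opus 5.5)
Your proposal is correct in substance, but it takes a different route from the paper. The formal identity you write is right: the field $(x\cdot\nabla u)\Acal(\nabla u)-\frac1p|\nabla u|^px+F(u)x+au\Acal(\nabla u)$ has divergence $nF(u)-auf(u)$ because $1-\frac np+a=0$, and its flux through $\partial B_r$ is $\Pscr_u(r)$.

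The paper never differentiates $u$ twice and never approximates. It invokes the Degiovanni--Musesti--Squassina theorem: the Pucci--Serrin inner-variation identity holds for every locally Lipschitz weak solution once $\xi\mapsto\frac1p|\xi|^p-F(s)$ is strictly convex. It applies this with $H=\chi_{\epsilon,r}(x)\,x$ for a radial cutoff $\chi_{\epsilon,r}\to\mathbf 1_{B_r}$. It then combines the result with $a$ times the energy identity obtained by testing with $u\chi_{\epsilon,r}$, which supplies the term $au|\nabla u|^{p-2}u_\nu$. That route needs only $u\in C^1$ and takes a few lines.

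Your regularization route is self-contained modulo standard regularity theory. It uses the nondegenerate operator $\Acal_\epsilon(\xi)=(\epsilon^2+|\xi|^2)^{(p-2)/2}\xi$ with a frozen mollified right-hand side, identifies the limit by uniqueness for the monotone Dirichlet problem, and rewrites $f(u)\,x\cdot\nabla u=x\cdot\nabla F(u)$ only after $\epsilon\to0$. It is heavier, for three reasons:
\begin{itemize}
\item the $C^{1,\beta}$ bounds must be uniform in $\epsilon$ for the regularized family, whereas Lemma~\ref{lem:uniform-C1-compactness} only covers $-\Delta_p$;
\item you need a uniform $W^{1,p}$ bound on $u_\epsilon-u$ to keep the boundary trace in the limit;
\item the cancellation $1-\frac np+a=0$ is no longer exact, and leaves the error $-\frac np\epsilon^2(\epsilon^2+|\nabla u_\epsilon|^2)^{(p-2)/2}$, which vanishes only thanks to the uniform gradient bound.
\end{itemize}

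Two of your side comments need correcting.

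First, in the $W^{1,2}$-stress route you have the degenerate case reversed. For $p<2$ one has $p'>2$, so $\Acal^{-1}(\zeta)=|\zeta|^{p'-2}\zeta$ is $C^1$ and $\nabla u=\Acal^{-1}(\mathbf X)\in W^{1,2}_{\mathrm{loc}}$, where $\mathbf X:=\Acal(\nabla u)$. The real difficulty is $p>2$. There, checking the formula on $\{\nabla u\neq0\}$ and noting that $\nabla|\nabla u|^p$ vanishes a.e.\ on the critical set does not identify the distributional divergence of $(x\cdot\nabla u)\mathbf X$. The missing observation is that $(x\cdot\nabla u)\mathbf X=(x\cdot|\mathbf X|^{p'-2}\mathbf X)\mathbf X$ is a $C^1$ function of $\mathbf X$, since it is $p'$-homogeneous with $p'>1$. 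Hence it lies in $W^{1,2}_{\mathrm{loc}}$ with gradient vanishing a.e.\ on $\{\mathbf X=0\}$. With this, that route also closes.

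Second, the difference-quotient variant is not an argument as stated. Passing to the limit in $\int\eta\,\Acal(\nabla u)\cdot D^2u^{(\delta)}x\,dx$ is exactly the difficulty that the Degiovanni--Musesti--Squassina result overcomes.
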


We next isolate the precise consequence of the singularity theorem of Kichenassamy--V\'eron that is needed below.  Recall
$$
\Phi_\kappa(x):=\kappa|x|^{-\alpha},
\qquad \mathcal M_\kappa
:=|\mathbb S^{n-1}|(\alpha\kappa)^{p-1}.
$$
Then $-\Delta_p\Phi_\kappa=\mathcal M_\kappa\delta_0$ in distributions. The following proposition was partially established in \cite[Section 3]{QZ2026}. 

\begin{prop} \label{prop:pole-expansion}
Let $W>0$ be $p$-harmonic in $B_R\setminus\{0\}$ and assume
$$
W(x)\le C|x|^{-\alpha} \quad \text{ for any } \ 0<|x|<R/2.
$$
Then there are $\kappa\ge0$, $\beta\in\R$, and a function $z$ such that
$$
W(x)=\kappa|x|^{-\alpha}+\beta+z(x) \quad \text{ and } \quad z(x)\to0
\quad \text{ as } \ x\to 0.
$$
If $\kappa>0$, then
\begin{equation}\label{eq:improved-pole-derivative}
r\sup_{|x|=r}|\nabla z(x)|\to 0
\quad \text{ as }\ r\to 0.
\end{equation}
Moreover,
\begin{equation}\label{eq:pole-distribution}
-\Delta_pW=\mathcal M_\kappa\delta_0
\quad\text{in }B_R,
\end{equation}
and the $p$-harmonic Pohozaev flux has the finite value
\begin{equation}\label{eq:pole-Pohozaev-value}
\Pscr_W(r)
=-a\mathcal M_\kappa\beta
\quad \text{ for any } \ 0<r<R.
\end{equation}
\end{prop}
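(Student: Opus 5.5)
The plan is to invoke the isolated singularity theorem of Kichenassamy--V\'eron for positive $p$-harmonic functions in a punctured ball, and then compute the Pohozaev flux directly from the resulting expansion.

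\emph{Step 1: the expansion.} Since $W>0$ is $p$-harmonic in $B_R\setminus\{0\}$ with $W\le C|x|^{-\alpha}$, there are two cases. Either the singularity is removable, in which case $\kappa=0$, $W$ extends continuously to $0$, and we set $\beta:=W(0)$. Or $W(x)/|x|^{-\alpha}\to\kappa>0$ and $-\Delta_pW=\mathcal M_\kappa\delta_0$. To upgrade this to $W-\kappa|x|^{-\alpha}\to\beta$ with $r|\nabla z|\to0$, I would use a blow-up argument.
\begin{itemize}
\item Rescale $W_r(y):=r^{\alpha}W(ry)$ on annuli $1/2\le|y|\le2$. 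These are uniformly bounded and $p$-harmonic, so by Lemma~\ref{lem:uniform-C1-compactness} they converge in $C^{1}$ to $\kappa|y|^{-\alpha}$. This gives $\nabla W=\nabla\Phi_\kappa(1+o(1))$ near $0$; in particular $\nabla W\ne0$ there.
\item Linearize around $\Phi_\kappa$. Near $0$ the difference $z_1:=W-\Phi_\kappa$ satisfies a linear uniformly elliptic equation in divergence form, obtained by averaging $D_\xi\Acal$ along the segment between the two gradients. In the rescaled annular variables this equation has smooth, nearly constant coefficients.
\item Apply the Kichenassamy--V\'eron theorem, which gives exactly $W-\Phi_\kappa\to\beta$. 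If only $W\sim\Phi_\kappa$ is available, the extra step is a comparison with the radial solutions $\Phi_\kappa+\beta\pm\epsilon$ on small annuli.
\item Obtain \eqref{eq:improved-pole-derivative} by applying interior $C^{1}$ estimates for this linear equation to $z_1-\beta$ on dyadic annuli. Rescaled, these read $r\sup_{|x|=r}|\nabla z|\le C\sup_{r/2\le|x|\le2r}|z|\to0$. The estimate is uniform because the rescaled coefficient matrices converge to those of the linearization at $\Phi_\kappa$.
\end{itemize}
I expect this step to be the main obstacle, and would cite Kichenassamy--V\'eron (and \cite{QZ2026}) for the constant $\beta$ rather than reprove it.

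\emph{Step 2: the distributional identity \eqref{eq:pole-distribution}.} Test $-\Delta_p W$ against $\eta\in C_c^\infty(B_R)$. Cut off a small ball $B_\epsilon$ and integrate by parts on $B_R\setminus B_\epsilon$, which leaves the boundary term $-\int_{\partial B_\epsilon}\eta\,|\nabla W|^{p-2}\partial_\nu W$. Using $\nabla W=\nabla\Phi_\kappa+o(\epsilon^{-\alpha-1})$ and $|\partial B_\epsilon|\sim\epsilon^{n-1}$, this term tends to $\eta(0)\mathcal M_\kappa$; the check is $(\alpha+1)(p-1)=n-1$. Local integrability of $|\nabla W|^{p-1}\sim|x|^{1-n}$ justifies the limit. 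When $\kappa=0$ the singularity is removable and the flux vanishes.

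\emph{Step 3: the flux value \eqref{eq:pole-Pohozaev-value}.} Apply the annular identity \eqref{eq:annular-Pohozaev} with $f\equiv0$ and $F\equiv0$. It shows that $\Pscr_W(r)$ is independent of $r$, so it suffices to compute $\lim_{r\to0}\Pscr_W(r)$. Substitute $W=\kappa r^{-\alpha}+\beta+z$ and $\nabla W=-\alpha\kappa r^{-\alpha-1}\nu+\nabla z$, with $|\nabla z|=o(r^{-1})$ and $|\nabla W|^{p-2}$ comparable to $r^{-(\alpha+1)(p-2)}$. Each integrand is then a leading term plus errors that are $o(1)$ after multiplying by $r^{n-1}$.
\begin{itemize}
\item \emph{Leading terms.} In the pure fundamental solution part the terms of order $r^{-\alpha}$ cancel exactly, because $\Pscr_{\Phi_\kappa}$ is a constant and scaling forces it to be $0$. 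The surviving finite term is $a\beta\int_{\partial B_r}|\nabla W|^{p-2}W_\nu=-a\beta\mathcal M_\kappa$.
\item \emph{Cross terms.} The terms of the form $r^{-\alpha}\cdot\nabla z$ carry the factor $r\,|\nabla z|\to0$ and therefore vanish in the limit.
\end{itemize}
Taking $r\to0$ gives $\Pscr_W\equiv-a\mathcal M_\kappa\beta$. If $\kappa=0$ then $W$ is $C^1$ at $0$, so $\Pscr_W(r)\to0=-a\mathcal M_0\beta$, consistent with the formula.
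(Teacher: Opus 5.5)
Your proposal is correct and follows essentially the paper's route: cite Kichenassamy--V\'eron for $\kappa$ and $\beta$, linearize around $\Phi_\kappa$ and apply rescaled interior estimates on dyadic annuli to get $|\nabla z|=o(r^{-1})$, compute the Dirac mass from the flux through $\partial B_\epsilon$, and evaluate the $r$-independent Pohozaev flux as $r\to0$. Your scaling argument for $\Pscr_{\Phi_\kappa}=0$ replaces the paper's direct computation with $a/\alpha=(p-1)/p$. One point to state precisely: the uniform $C^1$ estimate for the linearized equation does not follow merely from convergence of the rescaled coefficients. It needs uniform H\"older bounds on those coefficients, which you get from the uniform $C^{1,\beta}$ bounds on $W_r$ (Lemma~\ref{lem:uniform-C1-compactness}) followed by Schauder theory, exactly as in the paper.
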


\begin{proof}
Kichenassamy--V\'eron in \cite[Theorem~1.1 \& Remark~1.4]{KV1986} give us the coefficient $\kappa$, the boundedness of the remainder, the existence of its finite limit $\beta$, and the weighted derivative asymptotic
\begin{equation}\label{eq:KV-weighted-gradient}
|x|^{\alpha+1} |\nabla(W-\Phi_\kappa)(x)|\to 0.
\end{equation}
We next derive the  Dirac mass normalization below in the sign convention of the present paper.  It remains to prove the sharper derivative estimate and then compute the finite part of the Pohozaev flux.

Moreover, if $\kappa=0$, the Kichenassamy--V\'eron theorem gives a removable singularity.  The extension of $W$ is therefore $p$-harmonic in $B_R$, i.e. $\mathcal M_\kappa=0$, and Proposition~\ref{prop:local-Pohozaev} with $f\equiv0$ gives
$$
\Pscr_W(r)=0=-a\mathcal M_\kappa\beta.
$$
Then the claim of the proposition follows immediately. We may therefore assume $\kappa>0$. 

\medskip
\noindent
{\bf Step 1: The vanishing  $|x|\nabla z$ at the origin.}

Recall that $\Acal(\xi):=|\xi|^{p-2}\xi.$
Write
$\Psi:=\Phi_\kappa+\beta$ so that $z=W-\Psi$.  Since $W$ and $\Psi$ are $p$-harmonic away from the origin,
\begin{equation}\label{eq:z-linear-equation-physical}
\operatorname{div}(\mathcal B(x)\nabla z)=0,
\qquad \text{ for }\ 
\mathcal B(x):=\int_0^1D\Acal(\nabla\Psi+t\nabla z)\dd t.
\end{equation}
Set the fixed annuli
$$
A:=\{1/2<|y|<2\},\qquad
A_1:=\{2/3<|y|<3/2\},\qquad
A_2:=\{3/4<|y|<4/3\}.
$$
By \eqref{eq:KV-weighted-gradient}, for all sufficiently small $r$ and all $y\in A$,
$$
|\nabla z(ry)|\le\frac12|\nabla\Phi_\kappa(ry)|.
$$
Consequently, no vector in the defining segment in \eqref{eq:z-linear-equation-physical} for $\mathcal B(ry)$ is equal to zero. 

Set $z_r(y):=z(ry)$.  Then equation
\eqref{eq:z-linear-equation-physical} becomes
\begin{equation}\label{eq:equ-qr}
\operatorname{div}_y(\widehat{\mathcal B}_r\nabla z_r)=0
\quad\text{in } A,
\end{equation}
where, by the $(p-2)$-homogeneity property of $D\Acal$, we obtain
$$
\widehat{\mathcal B}_r(y):= r^{(\alpha+1)(p-2)} \mathcal B(ry).
$$
The ellipticity constants of $\widehat{\mathcal B}_r$ on $A$ are independent of $r$.  Set $W_r(y):=r^\alpha W(ry)$. 
The estimate of Tolksdorf  \cite{T1984} applied on fixed annuli gives
\begin{equation}\label{eq:Wr-C1gamma}
\sup_{0<r<r_0} \|W_r\|_{C^{1,\gamma}(A_1)}<\infty.
\end{equation}
Then the Kichenassamy--V\'eron derivative asymptotic gives
$$
\nabla W_r\to\nabla_y(\kappa|y|^{-\alpha}) \quad\text{uniformly on }A.
$$
The limiting gradient never vanishes on $A$. Moreover,
$$
\widehat{\mathcal B}_r
=\int_0^1 D\Acal\!\left( \nabla_y\Phi_\kappa(y)
 +t[\nabla_yW_r(y)-\nabla_y\Phi_\kappa(y)]
\right)\,\dd t.
$$
Therefore, the defining segments for $\widehat{\mathcal B}_r$ remain in a compact subset of $\R^n\setminus\{0\}$ for all sufficiently small$r$, and then the smoothness of $D\Acal$ away from the origin and the preceding $C^{1,\gamma}$ bound \eqref{eq:Wr-C1gamma} give
\begin{equation}
\sup_{0<r<r_0} \|\widehat{\mathcal B}_r\|_{C^{0,\gamma}(A_1)}<\infty,
\end{equation}
with uniform ellipticity constants.

Since $\widehat{\mathcal B}_r$ is uniformly elliptic and uniformly bounded in $C^{0,\gamma}(A_1)$, the interior Schauder estimate for divergence-form elliptic equations (see, for instance, \cite[Chapter~8]{GT2001}) applied to \eqref{eq:equ-qr} implies that $\sup_{A_2}|\nabla z_r|$ is bounded above by $\sup_{A_1}|z_r|$, up to a multiplicative constant that is independent of $r$.
 Since $z(x)\to 0$ as $x\to 0$, we get
$$
\|z_r\|_{L^\infty(A_1)}\to 0.
$$
Therefore $\sup_{A_2}|\nabla z_r|\to 0.$
Note that for $|y|=1$, $\nabla z_r(y)=r\nabla z(ry)$, which proves
\eqref{eq:improved-pole-derivative}.

\medskip
\noindent
{\bf Step 2: The mass at the origin.}
We now compute the distributional mass.  The asymptotic
\eqref{eq:KV-weighted-gradient} gives, on $\partial B_r$,
$$
|\nabla W-\nabla\Phi_\kappa|=o(r^{-\alpha-1}).
$$
Since $|\nabla\Phi_\kappa|=\alpha\kappa r^{-\alpha-1}$, the fundamental theorem of calculus applied to $\Acal$ along the segment between $\nabla\Phi_\kappa$ and $\nabla W$ gives
$$
|\Acal(\nabla W)-\Acal(\nabla\Phi_\kappa)|
=o\bigl(r^{-(\alpha+1)(p-1)}\bigr)
=o(r^{-(n-1)}).
$$
Let $\varphi\in C_c^\infty(B_R)$ and choose $\rho<R$ with
$\operatorname{spt}\varphi\subset\subset B_\rho$.  Since $W$ is $p$-harmonic in
$B_\rho\setminus\overline B_r$,
$$
\begin{aligned}
 \int_{B_\rho\setminus B_r}\Acal(\nabla W)\cdot\nabla\varphi\,\dd x
 &=-\int_{\partial B_r}\varphi\,\Acal(\nabla W)\cdot\nu\,\dd \mathcal H^{n-1}  \\
 &=-\int_{\partial B_r}\varphi\,\Acal(\nabla\Phi_\kappa)\cdot\nu\,\dd \mathcal H^{n-1}+o(1).
\end{aligned}
$$
Here $\nu=x/|x|$ is the outward unit normal of $B_r$.  Moreover,
$$
\Acal(\nabla\Phi_\kappa)\cdot\nu =-(\alpha\kappa)^{p-1}r^{-(n-1)}.
$$
Letting $r\to 0$ yields
$$
\begin{aligned}
 \int_{B_R}\Acal(\nabla W)\cdot\nabla\varphi\,\dd x
 &=\lim_{r\to 0}(\alpha\kappa)^{p-1}r^{-(n-1)}
   \int_{\partial B_r}\varphi\,\dd \mathcal H^{n-1}  \\
 &=|\mathbb S^{n-1}|(\alpha\kappa)^{p-1}\varphi(0).
\end{aligned}
$$
This proves \eqref{eq:pole-distribution} with the normalization
$\mathcal M_\kappa=|\mathbb S^{n-1}|(\alpha\kappa)^{p-1}$.

\medskip
\noindent
{\bf Step 3: The Pohozaev value.}
This part is similar to \cite[Proposition 3.5]{QZ2026}. 
To begin with,  note that the primitive term in \eqref{eq:Pohozaev-flux} is absent, since $W$ is $p$-harmonic.  For a function $v$ defined near
$\partial B_r$, set
\begin{equation}\label{eq:defn-Ir}
\mathcal I_r(v):=a v\Acal(\nabla v)\cdot\nu
-\frac r p|\nabla v|^p +r|\nabla v|^{p-2}v_\nu^2.
\end{equation}

Write
$$
W=\Phi_\kappa+\beta+z, \qquad
\xi_r:=\nabla\Phi_\kappa, \qquad
\eta_r:=\nabla z \quad\text{on }\partial B_r,
$$
and define
$$
\delta_r:= \sup_{\partial B_r}\frac{|\eta_r|}{|\xi_r|}.
$$
By \eqref{eq:improved-pole-derivative},
$|\eta_r|=o(r^{-1})$, whereas
$|\xi_r|=\alpha\kappa r^{-\alpha-1}$.  Hence
\begin{equation}\label{eq:deltar}
\delta_r=o(r^\alpha).
\end{equation}

For all sufficiently small $r$, one has
$|\eta_r|\le |\xi_r|/2$.  Every vector on the segment joining
$\xi_r$ to $\xi_r+\eta_r$ then has norm between
$|\xi_r|/2$ and $3|\xi_r|/2$.  The fundamental theorem of calculus, applied to the maps
$$
\xi\mapsto\Acal(\xi), \qquad \xi\mapsto|\xi|^p,
\qquad  \xi\mapsto|\xi|^{p-2}(\xi\cdot\nu)^2,
$$
therefore gives, uniformly on $\partial B_r$ and for every $p>1$,
\begin{equation}\label{eq:vector-inequ}
\begin{aligned}
 |\Acal(\xi_r+\eta_r)-\Acal(\xi_r)|
 &\le C\delta_r|\xi_r|^{p-1},\\
 \bigl||\xi_r+\eta_r|^p-|\xi_r|^p\bigr|
 &\le C\delta_r|\xi_r|^p,\\
 \left|
 |\xi_r+\eta_r|^{p-2}
 ((\xi_r+\eta_r)\cdot\nu)^2
 -|\xi_r|^p
 \right|
 &\le C\delta_r|\xi_r|^p.
\end{aligned}
\end{equation}
Note the argument above is also valid when $1<p<2$, since the relevant segment stays a fixed positive distance from the origin relative to $|\xi_r|$.

We now compare the full Pohozaev integrand of $W$ with that of the model one $\Phi_\kappa+\beta$. 
First of all, by the last two estimates in \eqref{eq:vector-inequ},
\begin{align}
&\int_{\partial B_r}
\left| -\frac r p\bigl(|\nabla W|^p-|\xi_r|^p\bigr)
 +r\left( |\nabla W|^{p-2}W_\nu^2-|\xi_r|^p
 \right)\right|\dd \mathcal H^{n-1}\notag\\
 \le& \ Cr|\partial B_r|\delta_r|\xi_r|^p
\le C(n,p,\kappa)r^{-\alpha}\delta_r =o(1), \label{eq:term1}
\end{align}
where we used
$n-p(\alpha+1)=-\alpha$ and \eqref{eq:deltar}.
Next, we compute
\begin{align*}
&W\Acal(\nabla W)\cdot\nu
-(\Phi_\kappa+\beta) \Acal(\nabla\Phi_\kappa)\cdot\nu\\
 =&\ z\Acal(\xi_r)\cdot\nu
 +W\bigl[ \Acal(\xi_r+\eta_r)-\Acal(\xi_r) \bigr]\cdot\nu.
\end{align*}
The first term satisfies
\begin{equation}\label{eq:term2}
  \int_{\partial B_r}|z|\,|\Acal(\xi_r)|\dd \mathcal H^{n-1}
\le \mathcal M_\kappa\sup_{\partial B_r}|z| \to 0.  
\end{equation}
On the other hand, the pole expansion gives $W\le Cr^{-\alpha}$ on $\partial B_r$ for small $r$, and hence the first inequality in \eqref{eq:vector-inequ} yields
\begin{align}
&\int_{\partial B_r}
W\left|\Acal(\xi_r+\eta_r)-\Acal(\xi_r) \right|\dd \mathcal H^{n-1}\notag\\
\le& \ Cr^{-\alpha}|\partial B_r| \delta_r|\xi_r|^{p-1}
\le C(n,p,\kappa)r^{-\alpha}\delta_r =o(1). \label{eq:term3}
\end{align}
Combining \eqref{eq:term1}, \eqref{eq:term2} and \eqref{eq:term3}, we arrive at
\begin{equation}\label{eq:full-pohozaev-comparison}
\int_{\partial B_r}
\left|\mathcal I_r(W)-\mathcal I_r(\Phi_\kappa+\beta) \right|\dd \mathcal H^{n-1} \to 0
\qquad\text{as }r\to 0;
\end{equation}
recall $\mathcal I_r$ in \eqref{eq:defn-Ir}.

It remains to compute the model value.  Since
$\partial_\nu\Phi_\kappa=-|\nabla\Phi_\kappa|$ and
$\Phi_\kappa=(r/\alpha)|\nabla\Phi_\kappa|$ on $\partial B_r$,
$$
\Pscr_{\Phi_\kappa}(r) =\left(-\frac a\alpha-\frac1p+1\right) r\int_{\partial B_r}|\nabla\Phi_\kappa|^p\dd \mathcal H^{n-1} =0,
$$
as $a/\alpha=(p-1)/p$.  Adding the constant $\beta$ does not
change the gradient, and therefore
$$
\Pscr_{\Phi_\kappa+\beta}(r) = \Pscr_{\Phi_\kappa}(r)
+a\beta\int_{\partial B_r}\Acal(\nabla\Phi_\kappa)\cdot\nu\dd \mathcal H^{n-1} =-a\mathcal M_\kappa\beta.
$$
Together with \eqref{eq:full-pohozaev-comparison}, this proves
$$
\lim_{r\to 0}\Pscr_W(r) =-a\mathcal M_\kappa\beta.
$$
Now we conclude \eqref{eq:pole-Pohozaev-value} from \eqref{eq:annular-Pohozaev}.
\end{proof}

\begin{rem}\label{rem:KV-erratum}
The strict comparison lemma cited in the original paper \cite{KV1986} imposes the non-vanishing of the gradient of the lower comparison function, as clarified and corrected in the subsequent erratum \cite[Page 352]{KV1987}. In all applications of strict comparison in the present work, the comparison function is chosen to be a fundamental pole, whose gradient is nonzero on a punctured annulus around the pole. Consequently, the strengthened hypothesis of the corrected lemma is fulfilled in each instance.
\end{rem}

\subsection{The first-contact}\label{sec:first-contact}

For the bubble in Theorem~\ref{thm:frozen-classification}, set
$\kappa_\tau:=\gamma_\tau^{-a}.$
Then it follows that
\begin{equation}\label{eq:bubble-tail}
r^\alpha U_\tau(r)\to \kappa_\tau
\quad \text{ as } \ r\to\infty,
\end{equation}
and
\begin{equation}\label{eq:bubble-source-mass}
 \mathcal M_\tau :=\int_{\R^n}g_\tau(U_\tau)\dd x
 =|\mathbb S^{n-1}|(\alpha\kappa_\tau)^{p-1}.
\end{equation}
To verify the last equality, integrate the equation of $U_\tau$ over $B_R$:
$$
 \int_{B_R}g_\tau(U_\tau)\dd x  =-\int_{\partial B_R}\Acal(\nabla U_\tau)\cdot\nu\dd \mathcal H^{n-1}.
$$
The explicit bubble gives
\begin{equation}\label{eq:estimate-U}
U_\tau(R)=\kappa_\tau R^{-\alpha}+o(R^{-\alpha})
\end{equation}
and
$$U_\tau'(R)=-\alpha\kappa_\tau R^{-\alpha-1}+o(R^{-\alpha-1}).$$
Since $(\alpha+1)(p-1)=n-1$, the boundary integral above converges to $|\mathbb S^{n-1}|(\alpha\kappa_\tau)^{p-1}$ as $R\to \infty$.

The proof is inspired by the first-crossing construction in the proof of \cite[Lemma~3.7]{QZ2026}.
That lemma assumed a preliminary neck estimate and concludes an outer-sphere infimum bound.  Under the present two-sided critical source bounds, the theorem below removes that preliminary neck hypothesis and proves pointwise dominance.

\begin{prop} \label{prop:first-contact}
Let $\Gamma_j\to\infty$, let $g_j:[0,\infty)\to[0,\infty)$ be continuous, and let $v_j>0$ solve
\begin{equation}\label{eq:equation-vj}
-\Delta_pv_j=g_j(v_j) \quad\text{in }B_{4\Gamma_j},
\end{equation}
where
\begin{equation}\label{eq:source-fj}
\begin{gathered}
Ls^q\le g_j(s)\le\Lambda s^q\quad(s\ge0),\\
G_j(s):=\int_0^sg_j(t)\dd t,
\qquad
nG_j(s)-a s g_j(s)\ge0,
\end{gathered}
\end{equation}
and $g_j\to g_\tau$ locally uniformly on $(0,\infty)$.  Assume
$$
v_j(0)=1, \qquad 0<v_j\le C_0, \qquad
v_j\to U_\tau \quad\text{in }C^1_{\loc}(\R^n).
$$
Then, for every $\vartheta>1$ and every sequence
$S_j=o(\Gamma_j)$,
\begin{equation}\label{eq:first-contact-conclusion}
v_j<\vartheta U_\tau
\quad\text{in }B_{S_j}
\end{equation}
for all sufficiently large $j$.
\end{prop}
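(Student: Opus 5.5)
We argue by contradiction via a first-contact radius, with a blow-down at that radius analysed through Proposition~\ref{prop:pole-expansion} and the Pohozaev identity. Fix $\vartheta>1$ and suppose that, along a subsequence, the inequality fails somewhere in $B_{S_j}$. Since $v_j\to U_\tau$ in $C^1_{\loc}$ and $U_\tau>0$, we have $v_j<\vartheta U_\tau$ on every fixed ball $B_K$ for $j$ large. We may therefore define the first-contact radius
$$
r_j:=\inf\{r>0:\ v_j(x)\ge\vartheta U_\tau(x)\text{ for some }|x|=r\}.
$$
A diagonal choice of $K$ gives $r_j\to\infty$, while $r_j\le S_j=o(\Gamma_j)$. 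On $\overline B_{r_j}$ we then have $v_j\le\vartheta U_\tau\le C|x|^{-\alpha}$ for $|x|\ge1$, with equality at some point of $\partial B_{r_j}$.

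The next step is the \emph{de-concentration} estimate, which controls the coefficient $K_{v_j}=g_j(v_j)v_j^{1-p}$ on a larger annulus around $\partial B_{r_j}$ without any neck hypothesis. Inside $B_{r_j}$ the bound $v_j\le C|x|^{-\alpha}$ gives $r^pK_{v_j}\le C r^{p-\alpha(q-p+1)}=Cr^{-p'\cdot(\ldots)}\to0$ on $r_j/2<|x|<r_j$, since $\alpha(q-p+1)=\alpha p/a=p'>$\,\dots\ is well above $p$. To push the control to $r_j<|x|<Ar_j$, where $A\Gamma_j$ still exceeds $Ar_j$, we use the Kilpel\"ainen--Mal\'y estimate \eqref{eq:KM-estimate}. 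For $x$ with $|x|\sim r_j$, its lower bound gives $c_1\Wolff^{\mu_j}_{1,p}(x;\rho)\le v_j(x)$, and the upper bound combined with $v_j\le C_0$ rules out concentration of $\mu_j=g_j(v_j)\,dx$ at scale $\rho\sim r_j$.

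I expect the cleaner route to be the following scaling argument. Suppose $\sup_{B_{2r_j}\setminus B_{r_j}} r_j^\alpha v_j\to\infty$. Reselect a maximum point of $\dist(\cdot,\partial B_{3r_j})^{\alpha}v_j$ as in Proposition~\ref{prop:point-selection}, producing a second bubble. That second bubble carries source mass at least $\mathcal M_\tau$-comparable. A Wolff-potential lower bound then forces $v_j\gtrsim r_j^{-\alpha}\cdot(\text{large})$ on $\partial B_{r_j/2}$ through the $p$-superharmonicity of $v_j$ (a comparison with $\Phi_\kappa$ on $B_{4\Gamma_j}$ minus the second bubble's core). This contradicts $v_j<\vartheta U_\tau$ there. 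Hence $w_j(y):=r_j^\alpha v_j(r_jy)$ is uniformly bounded on $1/2\le|y|\le2$, and indeed on $\{|y|\ge\varepsilon\}\cap B_{A}$. It solves $-\Delta_pw_j=r_j^{-p'\cdot}(\ldots)w_j^q$ with a coefficient tending to $0$ there, so the Harnack inequality and Lemma~\ref{lem:uniform-C1-compactness} apply.

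Passing to the limit, $w_j\to W$ in $C^1_{\loc}(B_2\setminus\{0\})$, where $W$ is $p$-harmonic, $W\le\vartheta\kappa_\tau|y|^{-\alpha}$ in $B_1\setminus\{0\}$, and $W(y_0)=\vartheta\kappa_\tau$ at some $|y_0|=1$. Mass conservation, namely integrating the equation of $v_j$ and using $v_j\to U_\tau$ together with the decay of the source on $B_{r_j}\setminus B_K$, gives a pole with $\mathcal M_\kappa=\mathcal M_\tau$, so $\kappa=\kappa_\tau$ in Proposition~\ref{prop:pole-expansion}. Now apply the annular Pohozaev identity \eqref{eq:annular-Pohozaev} to $v_j$ between radius $K$ and $r_j$. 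Its right-hand side is nonnegative by \eqref{eq:source-fj}, and $\Pscr_{v_j}(K)\to\Pscr_{U_\tau}(K)$, which tends to $0$ as $K\to\infty$ because $U_\tau$ is a bubble (the explicit tail gives zero finite part). Scaling $\Pscr_{v_j}(r_j)$ to $\Pscr_{W}(1)=-a\mathcal M_\tau\beta$ therefore yields $\beta\ge0$.

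Finally, consider $\Phi_{\kappa_\tau}+\beta$ versus $\vartheta\Phi_{\kappa_\tau}$. The strict comparison principle of Remark~\ref{rem:KV-erratum} on $B_1\setminus\{0\}$ forces $W<\vartheta\kappa_\tau|y|^{-\alpha}$ on $|y|=1$ if $W\not\equiv\vartheta\Phi_{\kappa_\tau}$; since both share the same pole coefficient but $\vartheta\kappa_\tau>\kappa_\tau$, $W\equiv\vartheta\Phi_{\kappa_\tau}$ is impossible. This contradicts the contact point $y_0$. The main obstacle is the de-concentration step, that is, excluding a secondary concentration near $\partial B_{r_j}$ so that the rescaled limit exists on an annulus beyond $|y|=1$. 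There I would first try the Wolff-potential mass count described above.
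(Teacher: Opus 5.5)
Your setup (first-contact radius, blow-down $w_j(y)=r_j^\alpha v_j(r_jy)$, pole expansion, and $\kappa=\kappa_\tau$ via mass conservation) is sound, but the closing argument fails. First, the Pohozaev step has the wrong sign. Since $\Pscr_{v_j}(r)=\int_{B_r}\bigl[nG_j(v_j)-av_jg_j(v_j)\bigr]\,dx\ge0$, after blow-down Proposition~\ref{prop:pole-expansion} gives $-a\mathcal M_\tau\beta\ge0$, i.e.\ $\beta\le0$, not $\beta\ge0$. Second, your final use of strict comparison gives no contradiction. From $W\le\vartheta\kappa_\tau|y|^{-\alpha}$ in $B_1\setminus\{0\}$, strict comparison yields strict inequality only in the open punctured ball. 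Equality at the boundary point $y_0\in\partial B_1$ is consistent with that, and outside $B_1$ you have no upper bound. The missing ingredient is the lower bound $W\ge\kappa_\tau|y|^{-\alpha}$. You get it by comparing $w_j$ with the truncated fundamental solutions $(1-\delta)\kappa_\tau\bigl(|y|^{-\alpha}-\mathcal R_j^{-\alpha}\bigr)$ on $B_{\mathcal R_j}\setminus\overline B_{r_\delta}$, where $\mathcal R_j=2\Gamma_j/r_j\to\infty$. This uses superharmonicity and positivity of $v_j$ far beyond the contact sphere, and it is where $S_j=o(\Gamma_j)$ really enters. It gives $\beta\ge0$, hence $\beta=0$, and then strict comparison forces $W\equiv\kappa_\tau|y|^{-\alpha}$ in $B_1\setminus\{0\}$. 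The contradiction is then $w_j(y_j)\to\kappa_\tau\neq\vartheta\kappa_\tau$ at the contact points $y_j$, which needs compactness only in a neighbourhood of $y_j$.

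The de-concentration step, which you rightly flag as the crux, is not established either.
\begin{itemize}
\item The quantity $r_j^\alpha v_j$ is the wrong one to test. Concentration is measured by the intrinsic height $r_jv_j(r_jy)^{1/a}$. In blow-down variables this is $\Theta_j=\epsilon_j^{1/p}w_j^{1/a}$ with $\epsilon_j=r_j^{-p'}$, and the rescaled coefficient is at most $\Lambda\Theta_j^p$. Since $\alpha>a$, you can have $r_j^\alpha v_j\to\infty$ with no concentration at all.
\item The reselection via $\dist(\cdot,\partial B_{3r_j})^\alpha v_j$ need not land near $\partial B_{r_j}$. The original bubble already gives the value $(3r_j)^\alpha$ at the origin.
\item Even a genuine secondary bubble at distance $\sim r_j$ whose height stays bounded away from zero carries only $O(1)$ mass. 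The Wolff bound then yields only $v_j\gtrsim c\,r_j^{-\alpha}$ on $B_{r_j}$, not a large multiple of $r_j^{-\alpha}$, so there is no contradiction with $v_j<\vartheta U_\tau$.
\end{itemize}
Thus neither your bound on the whole annulus $1\le|y|\le2$ nor the Pohozaev identity at radius $1$ is justified.

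What works, and is all that is needed, is a local argument. Apply the lower Kilpel\"ainen--Mal\'y estimate at the contact point $y_j$ itself, where $w_j(y_j)\to\vartheta\kappa_\tau$ is known. This gives the Morrey bound $\mu_j(B_r(y_j))\le Cr^{n-p}$ for small $r$. If $\Theta_j$ were unbounded near $y_j$, a distance-weighted reselection and intrinsic rescaling would produce a ball carrying $\mu_j$-mass at least $c/C_0$; this is where $v_j\le C_0$ is used. That mass contradicts the Morrey bound at a small fixed radius. Hence the coefficient is bounded near $y_j$, which gives $C^1$ compactness through the contact point. Take the Pohozaev identity at an interior radius $r_0<1$, where punctured compactness inside $B_1$ is already available.
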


\begin{proof}
We divide the proof into six steps.

\medskip
\noindent{\bf Step 1: The first contact and its blow-down.}
Suppose that \eqref{eq:first-contact-conclusion} fails along a
subsequence.  Define
$$\rho_j:=\inf\left\{ r\in[0,S_j]: \max_{\overline B_r}\frac{v_j}{U_\tau} \ge\vartheta \right\}.
$$
Since $v_j/U_\tau\to 1$ locally uniformly and $\vartheta>1$,
one has $\rho_j\to\infty$.  Continuity gives a point
${\bf e}_j\in\mathbb S^{n-1}$ such that
\begin{equation}\label{eq:first-contact-radius}
    v_j<\vartheta U_\tau\quad\text{in }B_{\rho_j},
\qquad v_j(\rho_j{\bf e}_j) =\vartheta U_\tau(\rho_j).
\end{equation}
Moreover,
$$
\rho_j\le S_j=o(\Gamma_j).
$$

Set
\begin{equation}\label{eq:defn-Wj}
W_j(x):=\rho_j^\alpha v_j(\rho_jx), \qquad \epsilon_j:=\rho_j^{-p'},
\end{equation}
and
\begin{equation}\label{eq:hat-fj}
\widehat g_j(s):=\rho_j^ng_j(\rho_j^{-\alpha}s),
\qquad \widehat G_j(s):=\rho_j^{n+\alpha}G_j(\rho_j^{-\alpha}s).
\end{equation}
Then the exponent identities in \eqref{eq:exponent-identities} with $ap'=\alpha$, together with \eqref{eq:equation-vj} and \eqref{eq:source-fj}, give
\begin{equation}\label{eq:blowdown-equation-bounds}
\begin{gathered}
-\Delta_pW_j=\widehat g_j(W_j)
\quad\text{in } \ B_{4\Gamma_j/\rho_j},\qquad 
L\epsilon_js^q\le\widehat g_j(s) \le \Lambda\epsilon_js^q,\\
  n\widehat G_j(s)-a s\widehat g_j(s)\ge0,
\qquad \epsilon_j^aW_j = \rho_j^{-ap'} \rho_j^\alpha v_j(\rho_jx)\le C_0.
\end{gathered}
\end{equation}
Moreover, at the first contact point, by \eqref{eq:bubble-tail} one has
\begin{equation}\label{eq:contact-value-limit}
W_j({\bf e}_j) =\vartheta\rho_j^\alpha U_\tau(\rho_j)
\to \vartheta\kappa_\tau.
\end{equation}

\medskip
\noindent {\bf Step 2: Punctured compactness.}
Let $K\subset\subset B_1\setminus\{0\}$, and choose
$K\subset\subset K'\subset \subset B_1\setminus\{0\}$.  Then \eqref{eq:first-contact-radius}  and \eqref{eq:bubble-tail} give
$$
\sup_{K'}W_j\le C_{K'}.
$$
Furthermore,
$$
\|\widehat g_j(W_j)\|_{L^\infty(K')}
\le\Lambda\epsilon_jC_{K'}^q\to 0.
$$
Therefore, Lemma~\ref{lem:uniform-C1-compactness}  gives precompactness in $C^1(K)$, and up to relabeling the sequence, 
\begin{equation}\label{eq:Wj-punctured-limit}
W_j\to W \quad\text{in }C^1_{\loc}(B_1\setminus\{0\}),
\qquad -\Delta_pW=0.
\end{equation}
Moreover,
\begin{equation}\label{eq:W-upper-pole}
0\le W(x)\le\vartheta\kappa_\tau|x|^{-\alpha}
\quad  \text{for any } \ 0<|x|<1.
\end{equation}

Let
$$
\dd\mu_j:=\widehat g_j(W_j)\dd x.
$$
For $\varphi\in C_c(B_1)$, the change of variables $y=\rho_jx$ together with \eqref{eq:hat-fj} gives
\begin{equation}\label{eq:source-change-variables}
 \int\varphi\dd\mu_j
 =\int_{B_{\rho_j}}
   \varphi(y/\rho_j)g_j(v_j(y))\dd y.
\end{equation}
Inside $B_{\rho_j}$, the first-contact dominance gives
$g_j(v_j)\le\Lambda\vartheta^qU_\tau^q$.  Note that the function $U_\tau^q$ is integrable since
$\alpha q=n+p'>n$.  For every fixed $y$,
$g_j(v_j(y))\to g_\tau(U_\tau(y))$, while
$\varphi(y/\rho_j)\to\varphi(0)$ and
$\mathbf 1_{B_{\rho_j}}(y)\to1$.  Applying dominated convergence to the left-hand side of \eqref{eq:source-change-variables} therefore gives
$$
 \int\varphi\,\dd\mu_j
 \to   \varphi(0)\int_{\R^n}g_\tau(U_\tau)\,\dd y.
$$
Equivalently,
\begin{equation}\label{eq:source-delta-convergence}
\mu_j\weakto\mathcal M_\tau\delta_0
\quad\text{weakly as measures in }B_1.
\end{equation}

For each $j$, testing the weak equation with radial cutoffs
converging to $\mathbf 1_{B_r}$ gives
$$
-\int_{\partial B_r} \Acal(\nabla W_j)\cdot\nu\,\dd S=\mu_j(B_r)
$$
for almost every $r\in(1/4,1/2)$.
Choose $r_0\in(1/4,1/2)$ in the intersection of these
full-measure sets for all $j$.  Since 
$\mathcal M_\tau\delta_0(\partial B_{r_0})=0$,
\eqref{eq:source-delta-convergence} gives
$$
\mu_j(B_{r_0})\to \mathcal M_\tau.
$$
The $C^1$-convergence in a neighborhood of $\partial B_{r_0}$ therefore yields
\begin{equation}\label{eq:limit-pole-flux}
 -\int_{\partial B_{r_0}}\Acal(\nabla W)\cdot\nu\dd \mathcal H^{n-1}  =\mathcal M_\tau.
\end{equation}
The right side of \eqref{eq:limit-pole-flux} is positive, so $W$ is not identically zero.  The strong minimum principle for the nonnegative $p$-harmonic function $W$ gives $W>0$ in $B_1\setminus\{0\}$.  Proposition~\ref{prop:pole-expansion}, applied via \eqref{eq:W-upper-pole}, now gives
\begin{equation}\label{eq:W-pole-expansion}
W(x)=\kappa|x|^{-\alpha}+\beta+o(1).
\end{equation}
Its distributional mass is
$|\mathbb S^{n-1}|(\alpha\kappa)^{p-1}$.  Then comparing with
\eqref{eq:limit-pole-flux} and \eqref{eq:bubble-source-mass} yields
\begin{equation}\label{eq:kappa-identification}
\kappa=\kappa_\tau.
\end{equation}

\medskip
\noindent{\bf Step 3: The Pohozaev sign implies $\beta\le 0$.}
At the same radius $r_0$, Proposition~\ref{prop:local-Pohozaev} and
\eqref{eq:blowdown-equation-bounds} give
$$
\Pscr_{W_j}(r_0)=\int_{B_{r_0}}  \bigl[n\widehat G_j(W_j)
-aW_j\widehat g_j(W_j)\bigr]\dd x \ge0.
$$
On $\partial B_{r_0}$, $W_j$ is uniformly bounded and
$$
0\le\widehat G_j(W_j)
\le\frac{\Lambda}{q+1}\epsilon_jW_j^{q+1}
\to 0
$$
uniformly.  The remaining boundary terms in $\Pscr_{W_j}(r_0)$ converge to the corresponding boundary terms for $W$ by the $C^1$ convergence in a neighborhood of $\partial B_{r_0}$.  Therefore
$$
\Pscr_{W_j}(r_0)\to\Pscr_W(r_0).
$$
Proposition~\ref{prop:pole-expansion} and \eqref{eq:kappa-identification} therefore imply
$$
0\le\Pscr_W(r_0)
=-a\mathcal M_\tau\beta,
$$
so
\begin{equation}\label{eq:beta-nonpositive}
\beta\le0.
\end{equation}

\medskip
\noindent{\bf Step 4: A comparison gives $\beta\ge 0$ together with the exact pole.}
Let
$$
\mathcal R_j:=\frac{2\Gamma_j}{\rho_j}\to \infty.
$$
Fix $\delta\in(0,1/2)$.  By \eqref{eq:W-pole-expansion} and \eqref{eq:kappa-identification}, choose
$r_\delta\in(0,r_0)$ small so that
$$
W\ge(1-\delta/2)\kappa_\tau r_\delta^{-\alpha}
\quad\text{on }\partial B_{r_\delta}.
$$
Then the $C^1$ convergence on this sphere gives that, for all large $j$,
$$
\sup_{\partial B_{r_\delta}}|W_j-W|
\le \frac{\delta}{4}\kappa_\tau r_\delta^{-\alpha},
$$
and hence
$$
W_j\ge\left(1-\frac{3\delta}{4}\right)
\kappa_\tau r_\delta^{-\alpha}
>(1-\delta)\kappa_\tau r_\delta^{-\alpha}
\quad\text{on }\partial B_{r_\delta}.
$$
Define, on $D_{j,\delta}:=B_{\mathcal R_j}\setminus\overline B_{r_\delta}$,
$$
H_{j,\delta}(x)
:=(1-\delta)\kappa_\tau
  \bigl(|x|^{-\alpha}-\mathcal R_j^{-\alpha}\bigr).
$$
This function is $p$-harmonic, is zero on the outer sphere, and is below $W_j$ on both boundary components of $D_{j,\delta}$.  Set
$$
\zeta_j := \max\{H_{j,\delta}-W_j,0\} \in W^{1,p}_0(D_{j,\delta}).
$$
Subtracting the weak equations of $W_j$ and $H_{j,\delta}$, and testing with $\zeta_j$, gives
\begin{align*}
0 &\le\int_{D_{j,\delta}}\widehat g_j(W_j)\zeta_j\,\dd x =
\int_{D_{j,\delta}}\bigl(\Acal(\nabla W_j)-\Acal(\nabla H_{j,\delta})\bigr)\cdot\nabla\zeta_j\,\dd x\\
&=-\int_{\{H_{j,\delta}>W_j\}}
\bigl(\Acal(\nabla W_j)-\Acal(\nabla H_{j,\delta})\bigr)\cdot\bigl(\nabla W_j-\nabla H_{j,\delta}\bigr)\,\dd x \le 0.
\end{align*}
Strict monotonicity of $\Acal$, together with the zero trace of $\zeta_j$, implies $\zeta_j=0$.  Therefore
$$
W_j\ge H_{j,\delta} \qquad\text{in }D_{j,\delta}.
$$
Fix $x\in B_1\setminus\{0\}$ and choose $r_\delta<|x|$.  Passing first to $j\to\infty$ in the comparison at this point removes the term $\mathcal R_j^{-\alpha}$; then letting $\delta\to 0$ yields
\begin{equation}\label{eq:W-above-pole}
W(x)\ge\kappa_\tau|x|^{-\alpha} \quad \text{ for any } \ 0<|x|<1.
\end{equation}
Taking $x\to0$ in the expansion \eqref{eq:W-pole-expansion} shows $\beta\ge 0$.  Together with \eqref{eq:beta-nonpositive}, we get
\begin{equation}\label{eq:beta-zero}
\beta=0.
\end{equation}

We claim that equality holds in \eqref{eq:W-above-pole}.  Indeed, if this is not the case, the  strict comparison principle from Remark~\ref{rem:KV-erratum} gives
$$
W>\Phi_{\kappa_\tau}=\kappa_\tau|x|^{-\alpha}
$$
throughout the punctured ball as $\Phi_{\kappa_\tau}$ has nonvanishing gradient.  Fix
$R_0\in(0,1)$ and set
\begin{equation}\label{eq:m0}
m_0:=\min_{\partial B_{R_0}}  (W-\Phi_{\kappa_\tau})>0,  \qquad  \epsilon_0:=m_0/2.
\end{equation}
For each $\delta>0$, choose 
$$0<s_\delta<\min\{\delta,R_0/2\}$$ 
sufficiently small that 
\begin{equation}\label{eq:inner-sphere}
     W\ge(1-\delta)\Phi_{\kappa_\tau}+\epsilon_0\qquad \text{on } \ \partial B_{s_\delta};
\end{equation}
this is possible since
$W-\Phi_{\kappa_\tau}\to0$ while
$\delta\Phi_{\kappa_\tau}\to\infty$ as $|x|\to 0$.  The same inequality also holds on $\partial B_{R_0}$ by the definition of $\epsilon_0$ in \eqref{eq:m0}.  Now a comparison with the
$p$-harmonic function $(1-\delta)\Phi_{\kappa_\tau}+\epsilon_0$ gives  
$$
W\ge(1-\delta)\Phi_{\kappa_\tau}+\epsilon_0
\qquad\text{in } \ B_{R_0}\setminus\overline B_{s_\delta}.
$$
Fix $x$ with $0<|x|<R_0$. For all sufficiently small $\delta$, we have
$s_\delta<|x|$, and hence
$$
W(x)\ge(1-\delta)\Phi_{\kappa_\tau}(x)+\epsilon_0.
$$
Letting $\delta\to 0$ yields
$$
W(x)\ge\Phi_{\kappa_\tau}(x)+\epsilon_0
\qquad \text{ for any} \ 0<|x|<R_0,
$$
and further letting $x\to0$ contradicts \eqref{eq:beta-zero}.  Hence
\begin{equation}\label{eq:exact-pole-limit}
W(x)=\kappa_\tau|x|^{-\alpha}
\quad \text{ for any } \ 0<|x|<1.
\end{equation}

\medskip
\noindent {\bf Step 5: De-concentration at the moving contact point.}
We first obtain a uniform Morrey bound for the source measures near
${\bf e}_j$.  Fix $r_1\in(0,1/16)$.  Since $\Gamma_j/\rho_j\to\infty$ and ${\bf e}_j\in\partial B_1$, for every $0<r<r_1$ and all large $j$,
$$
B_{6r}({\bf e}_j)\subset\subset B_{4\Gamma_j/\rho_j}(0).
$$
In this ball, $W_j$ is the nonnegative $p$-superharmonic representative solving
$-\Delta_pW_j=\mu_j$, where $\mu_j=\widehat g_j(W_j)\dd x\ge0$.  Then the lower estimate in the distinguished nonlinear potential theorem of Kilpel\"ainen--Mal\'y \cite[Theorem~1.6]{KM1994} together with \eqref{eq:contact-value-limit} gives
$$
\Wolff_{1,p}^{\mu_j}({\bf e}_j;2r) \le C(n, p) W_j({\bf e}_j) \le C(n, p) ,
$$
for constants independent of $j$ and $r$.  By the monotonicity of $t\mapsto\mu_j(B_t({\bf e}_j))$, the part of the Wolff integral over $[r,2r]$ satisfies
$$
\begin{aligned}
 \Wolff_{1,p}^{\mu_j}({\bf e}_j;2r)
 &\ge\int_r^{2r} \left(\frac{\mu_j(B_r({\bf e}_j))}{t^{n-p}}\right)^{1/(p-1)} \frac{\dd t}{t}\\
 &\ge c(n,p) \left(\frac{\mu_j(B_r({\bf e}_j))}{r^{n-p}}\right)^{1/(p-1)}.
\end{aligned}
$$
Combining these two inequalities and raising to the power $p-1$ yields
\begin{equation}\label{eq:contact-Morrey}
\mu_j(B_r({\bf e}_j))\le C_{KM} r^{n-p} \quad \text{ for any } \ 0<r<r_1,
\end{equation}
with constant $C_{KM}$ particularly independent of $j$ and $r$.

Now we define the  intrinsic height
\begin{equation}\label{eq:defn-Thetaj}
\Theta_j(x):=\epsilon_j^{1/p}W_j(x)^{1/a}.
\end{equation}
We prove that $\Theta_j$ is uniformly bounded in a fixed neighborhood of ${\bf e}_j$.   

We first record an auxiliary  estimate.
Suppose that points $z_j$ are selected, and define
\begin{equation}\label{eq:defn-rjint}
\mathsf H_j:=W_j(z_j),
\qquad r_j^{\mathrm{int}} :=\epsilon_j^{-1/p}\mathsf H_j^{-1/a} =\Theta_j(z_j)^{-1}.
\end{equation}
Let us also set
\begin{equation}\label{eq:intrinsic-rescaled-contact}
    V_j(y):=\mathsf H_j^{-1} W_j(z_j+r_j^{\mathrm{int}}y).
\end{equation}
Assume that
$$
V_j(0)=1,
\qquad 0<V_j\le2^a \quad\text{in }B_2.
$$
 Then scaling the equation gives
$$
 -\Delta_pV_j(y) =(r_j^{\mathrm{int}})^p\mathsf H_j^{1-p} \widehat g_j(\mathsf H_jV_j(y)).
$$
As $q-p+1=p/a$ by the third inequality in \eqref{eq:exponent-identities} and $r_j^{\mathrm{int}}=\epsilon_j^{-1/p}\mathsf H_j^{-1/a}$, we conclude
$$
 \epsilon_j(r_j^{\mathrm{int}})^p \mathsf H_j^{q-p+1}=1.
$$
The bounds on the source term $\widehat g_j$ in \eqref{eq:blowdown-equation-bounds} therefore become
\begin{equation}\label{eq:equation-Vj}
LV_j^q\le-\Delta_pV_j\le\Lambda V_j^q.
\end{equation}
Since $V_j\le 2^a$, the equation \eqref{eq:equation-Vj} can be written with a uniformly bounded zeroth-order coefficient.  Then Harnack inequality and $V_j(0)=1$ give $V_j\ge c>0$ on a fixed smaller  ball $B_{\varrho_0}$.  Consequently,
\begin{align}
 \mu_j(B_{\varrho_0r_j^{\mathrm{int}}}(z_j))
 &\ge c\epsilon_j\mathsf H_j^q
(r_j^{\mathrm{int}})^n\notag\\
 &=c\epsilon_j^{1-n/p} \mathsf H_j^{q-n/a}\notag\\
 &=\frac{c}{\epsilon_j^a\mathsf H_j} \ge\frac{c}{C_0}=:m_*>0.
 \label{eq:source-quantum}
\end{align}
Here, $1-n/p=-a$ and $n/a=q+1$, while
$\epsilon_j^a\mathsf H_j=v_j(\rho_jz_j)\le C_0$.  The number $m_*$ has now been fixed before any contact radius is chosen.

Choose $0<r_*<\frac 1 4$ so small that $4r_*<r_1$ and
\begin{equation}\label{eq:rstar-after-quantum}
C_{KM}(2r_*)^{n-p}<m_*.
\end{equation}
Suppose that $\Theta_j$ defined in \eqref{eq:defn-Thetaj} is unbounded in $B_{r_*/2}({\bf e}_j)$.  Then on $\overline B_{r_*}({\bf e}_j)$ we can choose $z_j$ maximizing
\begin{equation}\label{eq:distance-weighted-selection}
d_j(x)\Theta_j(x) 
\qquad\text{ where } \ d_j(x):=r_*-|x-{\bf e}_j|.
\end{equation}
Then one gets $d_j(z_j)\Theta_j(z_j)\to\infty$. Indeed, a point in
$B_{r_*/2}({\bf e}_j)$ with diverging value $\Theta_j$ has distance at least $r_*/2$ from the boundary of $B_{r_*}({\bf e}_j)$.  Since $r_j^{\mathrm{int}}=\Theta_j(z_j)^{-1}$ by \eqref{eq:defn-rjint}, it follows that, for each fixed $R$ and all sufficiently large $j$, we have
$$
R\,r_j^{\mathrm{int}}<\frac{d_j(z_j)}{2}.
$$
Consequently, if $x\in B_{Rr_j^{\mathrm{int}}}(z_j)$, then
$$
d_j(x)\;\ge\;\frac{d_j(z_j)}{2}.
$$
By the maximality property in \eqref{eq:distance-weighted-selection}, this implies
$$
 d_j(x)\,\Theta_j(x)\le d_j(z_j)\,\Theta_j(z_j),
 \qquad\text{and hence}\qquad \Theta_j(x) \le 2\,\Theta_j(z_j).
$$
Raising the second inequality above to the power $a$ and canceling $\epsilon_j^{a/p}$ in \eqref{eq:defn-Thetaj} yields
$$
W_j(x)\le2^a\mathsf H_j
\quad\text{in }B_{Rr_j^{\mathrm{int}}}(z_j).
$$
Thus the rescaling \eqref{eq:intrinsic-rescaled-contact} has exactly the properties used to derive the source quantum
\eqref{eq:source-quantum}.  Since, for fixed $\varrho_0$, the ball $B_{\varrho_0 r_j^{\mathrm{int}}}(z_j)$ is contained in $B_{2r_*}({\bf e}_j)$ for all sufficiently large $j$, as a consequence of
$$
\frac{r_j^{\mathrm{int}}}{d_j(z_j)} \;=\; \frac{1}{d_j(z_j)\,\Theta_j(z_j)} \;\to \; 0,
$$
it follows from \eqref{eq:contact-Morrey}, \eqref{eq:source-quantum}, and \eqref{eq:rstar-after-quantum} that
$$
m_* \;\le\; \mu_j\bigl(B_{2r_*}({\bf e}_j)\bigr)
\;\le\; C_{KM}\,(2r_*)^{n-p}
\;<\; m_*,
$$
which leads to a contradiction.  Hence
\begin{equation}\label{eq:Theta-bounded-contact}
\Theta_j\le C_{\Theta} 
\quad\text{in }B_{r_*/2}({\bf e}_j).
\end{equation}

\medskip
\noindent{\bf Step 6: Compactness through the moving point.}
On $B_{r_*/2}({\bf e}_j)$ write
\begin{equation}\label{eq:equ-Wj}
-\Delta_pW_j=b_j(x)W_j^{p-1}, \qquad b_j(x):=\frac{\widehat g_j(W_j)}{W_j^{p-1}}.
\end{equation}
Since $q-p+1=p/a$, \eqref{eq:Theta-bounded-contact} gives
\begin{equation}\label{eq:uniform-bj}
    0\le b_j\le\Lambda\epsilon_jW_j^{p/a}
=\Lambda\Theta_j^p\le C(\Lambda,C_{\Theta}, p).
\end{equation}
Then the Harnack inequality in
Lemma~\ref{lem:relative-gradient} applies to
\eqref{eq:equ-Wj}. This together with the bounded nonzero value \eqref{eq:contact-value-limit}, bounds on $W_j$ from above and below on a smaller ball.  More explicitly, for a fixed ball $B_{2s}({\bf e}_j)\subset\subset B_{r_*/2}({\bf e}_j)$,
$$
\sup_{B_s({\bf e}_j)}W_j\le C_H\inf_{B_s({\bf e}_j)}W_j,
\qquad
W_j({\bf e}_j)\in[c_0,C_0],
$$
and hence
$$
C_H^{-1}c_0\le W_j\le C_HC_0
\quad\text{in }B_s({\bf e}_j).
$$
Together with \eqref{eq:uniform-bj}, this yields
$$
\|b_jW_j^{p-1}\|_{L^\infty(B_s({\bf e}_j))}\le C(\Lambda,C_{\Theta}, p,c_0,C_H).
$$
Applying Lemma~\ref{lem:uniform-C1-compactness} to the translated functions
$$
\widetilde W_j(y):=W_j({\bf e}_j+y),
$$
up to passing to a subsequence with ${\bf e}_j\to{\bf e}\in \mathbb S^{n-1}$, we conclude that
$$
\widetilde W_j\to\widetilde W
\qquad\text{in }C^1_{\rm loc}(B_{s/2}).
$$ 
Then the bound \eqref{eq:uniform-bj}, the contact value
\eqref{eq:contact-value-limit}, and a finite Harnack chain starting at ${\bf e}_j$ yield the following: For every $r<r_*/2$, there are constants $0<c_r\le C_r<\infty$, independent of $j$, such that
$$
c_r\le W_j\le C_r \quad\text{in }B_r({\bf e}_j).
$$
Hence $b_jW_j^{p-1}$ is uniformly bounded on every compact subset of
$B_{r_*/2}({\bf e}_j)$. Applying
Lemma~\ref{lem:uniform-C1-compactness} to
$$
\widetilde W_j(y):=W_j({\bf e}_j+y)
$$
and using a diagonal subsequence, we obtain
$$
\widetilde W_j\to\widetilde W
\quad\text{in }C^1_{\loc}(B_{r_*/2}).
$$
On the nonempty open set where $|{\bf e}+y|<1$, let $K$ be a compact subset of
$$
\{y\in B_{r_*/4}:|{\bf e}+y|<1\}.
$$
Then, for all sufficiently large $j$, ${\bf e}_j+K$ lies in a compact subset of $B_1\setminus\{0\}$.  Hence
\eqref{eq:Wj-punctured-limit} and \eqref{eq:exact-pole-limit} give
$$
\widetilde W(y) =\kappa_\tau|{\bf e}+y|^{-\alpha} \qquad \text{ for any }\ y\in K.
$$
Continuity at $y=0$ therefore gives
$$
W_j({\bf e}_j)=\widetilde W_j(0) \to \widetilde W(0)=\kappa_\tau.
$$
This contradicts \eqref{eq:contact-value-limit}, since
$\vartheta>1$.  Thus the proposition follows.
\end{proof}

\begin{rem}
\label{rem:no-preliminary-neck}
Let us explain the precise difference between the preceding
first-contact argument and the conditional first-crossing argument of Qin--Zhang in \cite{QZ2026}.

In the proof \cite[Lemma~3.7]{QZ2026}, the preliminary neck estimate
$$
v_j(y)\le C_s|y|^{-a},    \qquad    a=\frac{n-p}{p},
$$
is used to justify an annular Harnack chain at the first-crossing scale.  Indeed, if $r_j$ is the first-crossing radius and
$$
\widetilde w_j(x):=r_j^\alpha v_j(r_jx),    \qquad \alpha=\frac{n-p}{p-1},
$$
then
$$
-\Delta_p\widetilde w_j = b_j(x)\widetilde w_j^{p-1},
$$
where the critical upper growth gives
$$
b_j(x) \le \Lambda r_j^{-p'} \widetilde w_j(x)^{q-p+1}.
$$
On the larger annulus
$$
 D:=\{x\in\R^n:1/3<|x|<3\},
$$
the preliminary estimate implies
$$
\widetilde w_j(x)  \le   C_s r_j^{\alpha-a}|x|^{-a}.
$$
Since $(\alpha-a)(q-p+1)=p',$
the powers of $r_j$ cancel and $b_j$ is uniformly bounded on
$D$.  This is what permits the Harnack chain used in that proof.
The first-crossing estimate itself controls the original variables only up to radius $2r_j$, whereas the larger annulus $D$ reaches radius $3r_j$; it therefore does not by itself supply the outward coefficient control required by that Harnack argument.

The present proof does not attempt to establish such a preliminary bound on the whole neck.  Instead, the first-contact blow-down at the beginning produces punctured compactness and, by the Pohozaev sign together with the comparison and strict-comparison arguments, it identifies the inward limit as the exact pole
$$
W(x)=\kappa_\tau|x|^{-\alpha}.
$$
Near the moving contact point ${\bf e}_j$, the lower 
Wolff-potential estimate \cite[Theorem~1.6]{KM1994} yields
$$
\mu_j(B_r({\bf e}_j))\le Cr^{n-p}.
$$
If the intrinsic quantity
$$
\Theta_j:=  \epsilon_j^{1/p}W_j^{1/a}
$$
is unbounded there, then a distance weighted point selection together with intrinsic rescaling produces  $m_*>0$.  For a sufficiently small fixed radius this would
contradict the earlier Morrey estimate.  Hence
$$
    \Theta_j\le C \quad \text{ near }\ {\bf e}_j,
$$
and consequently
$$
\frac{\widehat g_j(W_j)}{W_j^{p-1}} \le \Lambda\epsilon_jW_j^{q-p+1} =\Lambda\Theta_j^p \le C.
$$
Only after this coefficient bound has been derived do we apply
Harnack across the moving contact point.  Thus the coefficient
control formerly supplied by the preliminary neck estimate is here obtained from a de-concentration argument based on potential theory rather than
assumed.
\end{rem}

\subsection{Proof of the Harnack inequality and the full Liouville classification}\label{sec:final-proof}
\begin{proof}[Proof of Theorem~\ref{thm:main}]
Assume that the theorem is false.  Up to translating the centers, there are radii $R_j>0$ and nonnegative weak solutions
$$
-\Delta_pu_j=g(u_j) \quad\text{in }B_{3R_j}
$$
such that, by denoting
$$
M_j:=\max_{\overline B_{R_j}}u_j,
\qquad m_j:=\min_{\overline B_{2R_j}}u_j,
$$
one has
\begin{equation}\label{eq:RMm-infty}
R_j^{n-p}M_jm_j^{p-1}\to \infty.
\end{equation}
The regularity results of Section~\ref{sec:regularity} justify the existence of the maxima and minima.  Since a nontrivial nonnegative solution is positive in the connected ball, we conclude that $m_j>0$.  In fact, solutions which vanish identically already satisfy the desired estimate.

Recalling \eqref{eq:RMm-infty} and applying Proposition~\ref{prop:point-selection} to the sequence,  we obtain sequences
$x_j,A_j,\delta_j,\sigma_j,\Gamma_j,v_j$, and
\begin{equation}\label{eq:defn-Xij}
\Xi_j:=A_j\sigma_j^{n-p}m_j^{p-1}\to\infty
\quad \text{ and } \quad \Gamma_j^{n-p}\ge\Xi_j.
\end{equation}
Then, after passing to a subsequence,
$v_j\to U_\tau$ in $C^1_{\loc}(\R^n)$,
and Theorem~\ref{thm:frozen-classification} identifies $U_\tau$ with an explicit bubble.

To apply Proposition~\ref{prop:first-contact}, we take $g_j=g_{A_j}$ and verify its hypotheses.  The two-sided critical  bounds on $g_j$ follow from $L\le h\le\Lambda$, the (scaled) Pohozaev sign is proved in \eqref{eq:pohozaev-sign}, the local convergence 
$$
g_j\to g_\tau
\quad\text{locally uniformly on }(0,\infty).
$$
is given by Lemma~\ref{lem:frozen-convergence}, and \eqref{eq:vj-normalization} together with \eqref{eq:vj-limit} supplies the uniform bound and normalization.  Thus every hypothesis of Proposition~\ref{prop:first-contact} has been verified, and we are allowed to apply it.

Define
\begin{equation}\label{eq:defn-etaj}
\eta_j:=\Xi_j^{-1/[2(n-p)]}, \qquad \widehat\Gamma_j:=\eta_j\Gamma_j.
\end{equation}
Then $\eta_j\to0$ and, by \eqref{eq:defn-Xij}
\begin{equation}\label{eq:defn-hatXij}
\widehat\Gamma_j^{n-p} =\Xi_j^{-1/2}\Gamma_j^{n-p}
\ge\Xi_j^{1/2}\to \infty \quad \text{ and }
\quad 2\widehat\Gamma_j=o(\Gamma_j).
\end{equation}
Apply Proposition~\ref{prop:first-contact} with any fixed
$\vartheta>1$ and $S_j=2\widehat\Gamma_j$.  For every large $j$, we have
\begin{equation*}
v_j\le\vartheta U_\tau
\quad\text{in }B_{2\widehat\Gamma_j}.
\end{equation*}
Then the estimate \eqref{eq:estimate-U} on the explicit bubble and \eqref{eq:bubble-tail} give, on
$|y|=\widehat\Gamma_j$,
\begin{equation}\label{eq:adaptive-sphere-tail}
v_j(y)\le C(n,p,\kappa_\tau)\widehat\Gamma_j^{-\alpha}.
\end{equation}

Recall   $\Gamma_j=\sigma_j/\delta_j$ in \eqref{eq:defn-Gammaj}. Then by \eqref{eq:defn-hatXij}
\begin{equation}\label{eq:physical-adaptive-radius}
\delta_j\widehat\Gamma_j =\eta_j\delta_j\Gamma_j =\eta_j\sigma_j.
\end{equation}
Recall also that the selected point has distance
$D_j=8\sigma_j$ from $\partial B_{2R_j}$.  Since $\eta_j<1$ for large $j$ according to \eqref{eq:defn-etaj}, the entire sphere with radius
\eqref{eq:physical-adaptive-radius} lies inside $B_{2R_j}$.  Therefore the definition of $m_j$, the definition of $v_j$ in \eqref{eq:defn-vj} and \eqref{eq:adaptive-sphere-tail} imply
\begin{equation*}
m_j\le C(n,p,\kappa_\tau) A_j\widehat\Gamma_j^{-\alpha}
= C(n,p,\kappa_\tau)  A_j\eta_j^{-\alpha}\Gamma_j^{-\alpha}.
\end{equation*}
Raising this inequality to the power $p-1$, and using the fourth identity in \eqref{eq:exponent-identities} together with
\begin{equation*}
\Gamma_j^{n-p}=\sigma_j^{n-p}A_j^p,
\end{equation*}
which follows from \eqref{eq:defn-Gammaj}, we obtain
$$
m_j^{p-1} \le C A_j^{p-1}\eta_j^{-(n-p)} \sigma_j^{-(n-p)}A_j^{-p}
=C\eta_j^{-(n-p)}\sigma_j^{-(n-p)}A_j^{-1}.
$$
Multiplication by $A_j\sigma_j^{n-p}$ and using \eqref{eq:defn-Xij} yield
\begin{equation*}
\Xi_j\le C\eta_j^{-(n-p)}=C\Xi_j^{1/2}.
\end{equation*}
This contradicts $\Xi_j\to\infty$.  Hence
\eqref{eq:main-harnack} holds and we conclude the theorem.
\end{proof}

\begin{proof}[Proof of Corollary~\ref{cor:full-liouville}]
Recall that $ \alpha:=\frac{n-p}{p-1}.$
By the local regularity established in
Section~\ref{sec:regularity}, $u$ is continuous.  Let
$m_1:=\min_{\partial B_1}u>0$.  For $S>1$, define
$$
 \psi_S(x):=
 m_1\frac{|x|^{-\alpha}-S^{-\alpha}}{1-S^{-\alpha}}
 \qquad\text{in }B_S\setminus\overline{B_1}.
$$
The function $\psi_S$ is $p$-harmonic, equals $m_1$ on $\partial B_1$, and vanishes on $\partial B_S$.  

Since $-\Delta_p u>0$, applying the comparison principle gives
$u\ge\psi_S$.  Letting $S\to\infty$ and also using the positive minimum of $u$ on $\overline{B_1}$, we obtain a constant $c_0>0$ such that
\begin{equation}\label{eq:entire-lower-bound}
 \inf_{B_{2R}}u\ge c_0R^{-\alpha}  \qquad\text{for every }R\ge1.
\end{equation}
Applying Theorem~\ref{thm:main} in $B_{3R}$ and using
$\alpha(p-1)=n-p$, we get
$$
 \sup_{B_R}u \le C R^{p-n}\left(\inf_{B_{2R}}u\right)^{1-p}
 \le Cc_0^{1-p}.
$$
Hence
$$
 M:=\sup_{\R^n}u<\infty.
$$
The same Harnack estimate, together with $\sup_{B_R}u\ge u(0)>0$, gives first
$$
\inf_{B_{2R}}u\le C_uR^{-\alpha}.
$$
Then replacing $R$ by $R/2$ with $R\ge2$, and enlarging $C_u$ if necessary, we obtain the sharp infimum bound  
\begin{equation}\label{eq:entire-sharp-infimum}
 \inf_{B_R}u\le C_u R^{-\alpha}  \qquad\text{for every }R\ge1.
\end{equation}

Choose $x_j\in\R^n$ with $u(x_j)\to M$, set
$$
 \rho:=M^{-p/(n-p)}, \qquad V_j(y):=M^{-1}u(x_j+\rho y),
$$
and observe that
$$
 0<V_j\le1,  \qquad V_j(0)\to1,
 \qquad  -\Delta_pV_j=V_j^qh(MV_j)=g_M(V_j)
 \quad\text{in }\R^n.
 $$
Then for every fixed $R>0$,
$$
0<V_j\le1, \qquad \|g_M(V_j)\|_{L^\infty(B_{2R})}\le\Lambda.
$$
Therefore, up to relabeling the sequence,  Lemma~\ref{lem:uniform-C1-compactness}  
gives that
$$
V_j\to V\qquad\text{in }C^1_{\rm loc}(\R^n).
$$
Passing the weak formulation to the limit also gives
$$
-\Delta_pV=g_M(V), \qquad 0<V\le1, \qquad V(0)=1. 
$$  Applying
Theorem~\ref{thm:frozen-classification} at the finite amplitude $\tau=M$ yields
$$
 h(s)=h(M)\qquad \text{ for every } \ 0<s\le M.
$$
Since $0<u\le M$, the original equation therefore reduces globally to
$$
 -\Delta_pu=h(M)u^q.
$$
Define
$$
\widetilde u:=h(M)^{1/(q-p+1)}u.
$$
Then
$$
-\Delta_p\widetilde u=\widetilde u^q.
$$ 
The function $\widetilde u$ is bounded and satisfies the sharp infimum estimate corresponding to \eqref{eq:entire-sharp-infimum}.  Therefore all the conditions of
Ciraolo--Gatti \cite[Theorem~1.2]{CG2026} are satisfied; see also  \cite[Theorem~1.1]{O2025} for the case $n=2$ and $1<p<2$.
Consequently, $\widetilde u$, and thus $u$, coincides with an Aubin–Talenti profile. By matching its maximal value and the coefficient $h(M)$, we obtain the identity \eqref{eq:full-liouville-profile}.
\end{proof}

\appendix
\section{A weak local Pohozaev identity}

\begin{proof}[Proof of Proposition~\ref{prop:local-Pohozaev}]
We use the compactly supported variational identity of
Degiovanni--Musesti--Squassina
\cite[Lemma~1, formula~(3)]{DMS2003}. See also the alternative proof by
direct domain variations due to Yan and Zhou
\cite[Theorem~1.1]{YZ2026}. For
$$
\mathcal L(s,\xi)=\frac1p|\xi|^p-F(s),
$$
the map $\xi\mapsto\mathcal L(s,\xi)$ is strictly convex for $p>1$.
Since $u\in C^1(B_R)$, it is locally Lipschitz. By taking the compactly supported inner variation generated by $H\in C_c^1(B_R;\mathbb R^n)$, we obtain that
\begin{equation}\label{eq:local-PS-identity}
\int_{B_R} \left[(\operatorname{div}H)
\left(\frac1p|\nabla u|^p-F(u)\right)
-|\nabla u|^{p-2}u_i u_j\partial_iH_j
\right]\dd x=0.
\end{equation}

Fix $r\in(0,R)$.  Choose $\epsilon>0$ with $r+\epsilon<R$, and choose $\zeta\in C^\infty(\mathbb R)$ such that
$$
\zeta(t)=1\ \text{ when } \ t\le 0,\qquad
\zeta(t)=0\ \text{ when } \ t\ge 1,\qquad \zeta'\le 0,
\qquad \int_0^1\zeta'(t)\dd t=-1.
$$
Also define
$$
\chi_{\epsilon,r}(x) :=\zeta\left(\frac{|x|-r}{\epsilon}\right),\qquad H_{\epsilon,r}(x):=\chi_{\epsilon,r}(x)x.
$$
Then
$$
\operatorname{div}H_{\epsilon,r}
=n\chi_{\epsilon,r}+|x|\partial_\rho\chi_{\epsilon,r},
\qquad \partial_i(H_{\epsilon,r})_j
=\chi_{\epsilon,r}\delta_{ij}
+\partial_\rho\chi_{\epsilon,r}\frac{x_ix_j}{|x|},
$$
and substitution in \eqref{eq:local-PS-identity} gives
\begin{align}
0={}&
\int_{B_R}\chi_{\epsilon,r}
\bigl[a|\nabla u|^p-nF(u)\bigr]\dd x \notag \\
&\quad +\int_{B_R}|x|\partial_\rho\chi_{\epsilon,r}
\left[ \frac1p|\nabla u|^p-F(u) -|\nabla u|^{p-2}u_\nu^2 \right]\dd x. \label{eq:inner-test}
\end{align}
Letting  $\epsilon\to 0$,
the first integral in \eqref{eq:inner-test} tends to
$$
\int_{B_r}\bigl[a|\nabla u|^p-nF(u)\bigr]\dd x. 
$$
As for the second integral, the coarea formula gives
\begin{align*}
&\int_{B_R}|x|\partial_\rho\chi_{\epsilon,r}
\left[\frac1p|\nabla u|^p-F(u)
-|\nabla u|^{p-2}u_\nu^2 \right]\dd x\\
=&\int_0^1(r+\epsilon s)\zeta'(s)
\int_{\partial B_{r+\epsilon s}}
\left[\frac1p|\nabla u|^p-F(u)-|\nabla u|^{p-2}u_\nu^2 \right]\dd \mathcal H^{n-1}\dd s.
\end{align*}
Since $u\in C^1$ and $F(u)$ is continuous, when $\epsilon\to 0$, this tends to
$$
-r\int_{\partial B_r}\left[\frac1p|\nabla u|^p-F(u)
-|\nabla u|^{p-2}u_\nu^2\right]\dd \mathcal H^{n-1}.
$$
Thus
\begin{equation}\label{eq:pohozaev-boundary-first}
 \int_{\partial B_r}
\left[ r|\nabla u|^{p-2}u_\nu^2-\frac rp|\nabla u|^p+rF(u)
\right]\dd \mathcal H^{n-1} = \int_{B_r}\bigl[nF(u)-a|\nabla u|^p\bigr]\dd x.
\end{equation}

We next test the weak equation with
$$
\varphi_{\epsilon,r}:=u\chi_{\epsilon,r}.
$$
Since $u\in C^1(B_R)\cap W^{1,p}(B_R)$ and $\chi_{\epsilon,r}$ is Lipschitz with compact support in $B_R$, this test can be obtained by smoothing $\chi_{\epsilon,r}$ and passing to the limit in $W^{1,p}$.  Then the weak equation gives
$$
\int_{B_R}\chi_{\epsilon,r}|\nabla u|^p\dd x +\int_{B_R}u|\nabla u|^{p-2}\nabla u\cdot\nabla\chi_{\epsilon,r}\dd x
= \int_{B_R}\chi_{\epsilon,r}u f(u)\dd x.
$$
The first integral  on the left-hand side and the right-hand side  integral converge, respectively, to the corresponding integrals over $B_r$ as $\epsilon\to 0$.  As for the second term on the left-hand side, via coarea formula again
\begin{multline*}
\int_{B_R}u|\nabla u|^{p-2}\nabla u\cdot\nabla\chi_{\epsilon,r}\dd x=
\int_0^1\zeta'(s) \int_{\partial B_{r+\epsilon s}} u|\nabla u|^{p-2}u_\nu\,\dd \mathcal H^{n-1}\,\dd s\\
\to -\int_{\partial B_r}u|\nabla u|^{p-2}u_\nu\dd \mathcal H^{n-1}.
\end{multline*}
Therefore
\begin{equation}\label{eq:energy-ball-with-boundary}
\int_{B_r}|\nabla u|^p\dd x
=\int_{B_r}u f(u)\dd x
 +\int_{\partial B_r}u|\nabla u|^{p-2}u_\nu\dd \mathcal H^{n-1}.
\end{equation}
Combining \eqref{eq:pohozaev-boundary-first} and \eqref{eq:energy-ball-with-boundary} we arrive at \eqref{eq:local-Pohozaev}.  

As for the annular identity \eqref{eq:annular-Pohozaev}, let $0<r<s<R$. Choose a radial cutoff equal to one on
$B_s\setminus\overline B_r$, with a transition layer near each connected component of the boundary. The outer layer gives $\Pscr_u(s)$, while the inner layer gives $-\Pscr_u(r)$ since the outward normal of the annulus on
$\partial B_r$ is $-\nu$. Passing to the limit yields
\eqref{eq:annular-Pohozaev} with the stated signs.
\end{proof}

\section{A concise outline for the case \texorpdfstring{$p=2$ and $h\equiv1$}{p=2 and h=1}}
\label{appen:B}

Let us explain the main idea for the case when $p=2$ and $h\equiv 1$.

\subsection{The essential argument for Theorem~\ref{thm:frozen-classification}}
We first consider $w=V^{\frac{2}{2-n}}$. Then according to the assumption of Theorem~\ref{thm:frozen-classification},
$$
w(0)=1, \qquad \nabla w(0)=0, \qquad w\ge 1, 
$$
and the equation $-\Delta V=V^{\frac{n+2}{n-2}}$ gives
$$
w\Delta w = \frac n 2 |Dw|^2 + \frac{2}{n-2}. 
$$
By setting 
$$
c_n=\frac {2}{n-2}\quad \text{ and } \quad P=\Delta w,
$$
we arrive at
\begin{equation}\label{eq:p2-equ1}
    wP=\frac n 2|Dw|^2 + c_n
\end{equation}
with $P(0)=c_n$. This corresponds to  \eqref{eq:transformed-equation}.

Now define 
$$
\mathsf E:= D^2w-\frac P n\Id. 
$$
By differentiating \eqref{eq:p2-equ1} one has
$$
P\nabla w + w\nabla P = n D^2wDw.
$$
Thus
\begin{equation}\label{eq:p2-equ2}
    w\nabla P= n\mathsf  E\nabla w;
\end{equation}
see \eqref{eq:Ou-form}. 
Moreover, note that
$$
{\rm div} (\mathsf E)= \nabla \Delta w - \frac 1 n \nabla P=\frac{n-1}{n}\nabla P,
$$
and, since $\mathsf E$ is trace-free, 
$$
\mathsf E_{ij} w_{ji}= |\mathsf E|^2  + \frac P n {\rm tr} \mathsf E= |\mathsf E|^2.
$$
Therefore, 
\begin{equation}\label{eq:p2-equ3} 
{\rm div} (\mathsf E\nabla w)={\rm div}(\mathsf E)\cdot \nabla w + \mathsf E_{ij} w_{ji}=|\mathsf E|^2+ \frac{n-1}{n} \nabla P\cdot\nabla w;
\end{equation}
see \eqref{eq:div-EX}.

Observe that \eqref{eq:p2-equ2} yields
$$
w^{2-n}\nabla P= nw^{1-n} \mathsf E \nabla w.
$$
Consequently, by \eqref{eq:p2-equ2}  and \eqref{eq:p2-equ3}, 
\begin{align*}
{\rm div}(w^{2-n}\nabla P) 
={} &  n \, {\rm div}(w^{1-n}\mathsf E \nabla w)\\
={} &  n w^{1-n}\, {\rm div}(\mathsf E \nabla w) + n(1-n) w^{-n} \nabla w\cdot \mathsf E\nabla w\\
={} & n w^{1-n}|\mathsf E|^2 + (n-1)w^{1-n}(\nabla P\cdot \nabla w) + (1-n)w^{1-n}(\nabla P\cdot \nabla w),
\end{align*}
where the last two terms cancel each other. Thus we arrive at 
\begin{equation}\label{eq:p2-bochner}
\Lscr_w P:= {\rm div}(w^{2-n}\nabla P)= nw^{1-n}|\mathsf E|^2\ge 0;
\end{equation}
see \eqref{eq:bochner}. 

Now we need to prove the essential global bound\footnote{Currently the author does not see any simpler argument for this part, even when $p=2$. It would be of interest to determine whether there exists a more elementary proof for this which does not rely on the method of moving spheres.}
\begin{equation}\label{eq:p2-Pbound}
    P\le c_n;
\end{equation}
comparing with \eqref{eq:Qw-upper}.
Fix $\ell>c_n$ and $\mathfrak m>0$, let
$$
J_\ell(t)=\ell-\frac{c_n} t,\quad Y_{\ell,\mathfrak m}(t)= \mathfrak m t^{-\frac{n-2}{2}} \left(\frac{J_\ell(t)}{\ell-c_n}\right)^\frac n 2,\quad \theta_{\ell, \mathfrak m}(t) =\frac{n-1}{1+Y_{\ell,\mathfrak m}(t)}.
$$
Then the corresponding  multiplier is 
$$
\phi_{\ell,\mathfrak m}(t)= \exp\left(- \int_1^t\frac{\theta_{\ell, \mathfrak m}(s)}{s}\,ds\right).
$$

Consider
$$
\Mscr_{\ell,\mathfrak m}:=P\phi_{\ell,\mathfrak m}(w).
$$
Following the argument of Proposition~\ref{prop:bochner-lower}, we
obtain from \eqref{eq:p2-bochner}
\begin{multline}\label{eq:p2-lowerbound}
  {\rm div}(w^{2-n}\nabla \Mscr_{\ell, \mathfrak m})- {\mathbf b}_{\ell,\mathfrak m}\cdot \nabla \Mscr_{\ell, \mathfrak m}\\
  \ge \phi_{\ell, \mathfrak m}(w) w^{1-n}\left[\frac{\zeta_{\ell,\mathfrak m}^2}{n-1} + \mathfrak a_{\ell,\mathfrak m}(w) P (P-\ell)\right]  \quad \text{ on } \{\Mscr_{\ell, \mathfrak m} > \ell\}
\end{multline}
where $\mathfrak a_{\ell,\mathfrak m}(w)>0$. Consequently, $\Mscr_{\ell, \mathfrak m}$  cannot attain an interior maximum strictly larger than $\ell$; otherwise $P-\ell>0$ and then 
$${\rm div}(w^{2-n}\nabla \Mscr_{\ell, \mathfrak m})>0$$
at the interior maximum point, which leads to a contradiction. 
 
The remaining issue is a possible escape to infinity. Note that, the standard local gradient estimates on $V$ give
$$
|Dw|\le C(n) w,
$$
and then $P=O(w)$ by \eqref{eq:p2-equ1}.  Now for fixed 
$\mathfrak m$, one has
$$
Y_{\ell,\mathfrak m}(t)\to 0,\qquad \theta_{\ell,\mathfrak m}(t)\to n-1 \quad \text{as } \ t\to \infty.
$$
Thus $\phi_{\ell,\mathfrak m}\sim t^{-(n-1)}$ as $t\to \infty$ by \eqref{eq:defn-thetalm}. Hence, since $P=O(w)$,
$$
P\phi_{\ell,\mathfrak m}(w)=O(w^{-(n-2)})\to 0 \quad \text{ along every sequence on which } w\to \infty. 
$$
Now if a maximizing sequence has bounded $w$-values strictly larger than $\ell$, translating the solution and applying local elliptic compactness, one obtains an entire limiting solution, which satisfies the same equation as $w$ on which the maximum is attained. Then equation \eqref{eq:p2-lowerbound} contradicts the local maximum principle. Hence
$$
P\phi_{\ell,\mathfrak m}(w)\le \ell. 
$$
Observe that for every $t<\infty$ fixed, one has
$$
\phi_{\ell,\mathfrak m}(t)\to 1 \quad \text{ as} \ \mathfrak m\to \infty.
$$
Thus $P\le \ell$, and  letting $\ell\to (c_n)^+$ yields \eqref{eq:p2-Pbound}. 

We now have 
$$
P\le c_n \quad \text{ and } P(0)=c_n.
$$
Unlike the general $p$-case, the operator 
$$
\Lscr_w  := {\rm div}(w^{2-n}\nabla \cdot)
$$
does not degenerate at points where $\nabla w=0$. 
On every bounded set its coefficient $w^{2-n}$ is bounded above and below by positive constants.
By \eqref{eq:p2-bochner}, since $P$ attains its global maximum at the interior point $0$, the strong maximum principle immediately gives $P\equiv c_n$. This is a major simplification over the general proof, where the critical point analysis in Lemma~\ref{lem:unique-tangent}, the punctured-ball barrier in
Lemma~\ref{lem:local-Z-vanishing}, and the final open-and-closed argument in Proposition~\ref{prop:global-rigidity} are no longer needed. Plugging this into \eqref{eq:p2-bochner}, we conclude that $\mathsf E\equiv 0$, and the rest is standard. 

\subsection{The essential argument for Theorem~\ref{thm:main}}

Suppose the estimate fails. Then the normalization argument in
Proposition~\ref{prop:point-selection} gives
$v_j\to V$ locally, where $V$ satisfies the hypotheses of
Theorem~\ref{thm:frozen-classification} and hence is a Talenti bubble $U$.

We now explain the proof of Proposition~\ref{prop:first-contact} when $p=2$. We need to show that, for every $\vartheta>1$ and every
$S_j=o(\Gamma_j)$, one has
$$
v_j< \vartheta U\quad \text{ in  } \ B_{S_j}
$$
for $j$ sufficiently large, where $\Gamma_j\to \infty$ is defined in Proposition~\ref{prop:point-selection}. 

Assume instead that the first contact occurs at radius $\rho_j\to \infty$. Then 
$$
v_j\le \vartheta U\quad \text{ in  } \ B_{\rho_j},\qquad v_j(\rho_j{\mathbf e}_j)= \vartheta U(\rho_j), \quad {\mathbf e}_j\in \mathbb S^{n-1}. 
$$
Since $\rho_j=o(\Gamma_j)$, define the blow-down sequence
$$
W_j(x)= \rho_j^{n-2} v_j(\rho_jx),\qquad \epsilon_j=\rho_j^{-2}
$$
Then
$$
-\Delta W_j =\epsilon_j W_j^{\frac{n+2}{n-2}} 
$$
with 
$$
W_j(\mathbf{e}_j) \to\vartheta \kappa \quad \text{ and } \quad \kappa=[n(n-2)]^{\frac {(n-2)} 2}
$$
as $r^{n-2}U(r)\to \kappa$. 

Moreover, on the punctured ball $B_1\setminus\{0\}$, 
$W_j\to W$ for some harmonic function $W$ satisfying
$$
0<W(x)\le \vartheta\kappa|x|^{2-n}.
$$
The source measures $\mu_j:=\epsilon_j W_j^{\frac {n+2}{n-2}}\dd x$ converge weakly to the concentrated bubble mass $\mathcal M\delta_0$, where
$$\mathcal M=\int_{\mathbb R^n} U^{\frac{n+2}{n-2}}\dd x$$
Then by  the classical B\^ocher theorem,
\begin{equation}\label{eq:expression-W-p2}
    W(x) = \kappa |x|^{2-n} +H(x), 
\end{equation}
where $H$ is harmonic in $B_1$. 

In the pure critical power case, for every $j$, the critical homogeneity gives $\Pscr_{W_j}(r)=0$ for every admissible $r$. Passing to the limit on a fixed sphere gives $\Pscr_{W}(r)=0$. Now by the expression of $W$ in  \eqref{eq:expression-W-p2}, 
Proposition~\ref{prop:pole-expansion} with $p=2$ yields 
$$\Pscr_W(r)=-\frac{n-2} 2\mathcal M H(0),$$
and then $H(0)=0$ as $\mathcal M>0$. On the other hand, a comparison with truncated fundamental solutions gives $W\ge \kappa|x|^{2-n}$, which tells $H\ge 0$. As $H$ is harmonic and $H(0)=0$, one gets indeed $H\equiv 0$. Thus the  blow-down limit is exactly 
\begin{equation}\label{eq:p2-pole}
    W(x)= \kappa |x|^{2-n}. 
\end{equation}

However, the convergence above is only known away from the origin and does not initially include the moving point $\mathbf{e}_j\in\mathbb S^{n-1}$. 
For $p=2$, the required source de-concentration in Step 5 in the proof of Proposition~\ref{prop:first-contact} follows from an elementary Green-function estimate: Since
$$
-\Delta W_j = \mu_j\ge 0,
$$
the Green representation $G_{B_{2r}(\mathbf{e}_j)}$ in the ball $B_{2r}(\mathbf{e}_j)$ yields
$$
W_j(\mathbf{e}_j)\ge \int_{B_{2r}(\mathbf{e}_j)}G_{B_{2r}(\mathbf{e}_j)}(\mathbf{e}_j,y) \,d\mu_j(y)\ge c(n) r^{2-n}\mu_j(B_r(\mathbf{e}_j)). 
$$
Then, since $W_j(\mathbf{e}_j)$ stays bounded
\begin{equation}\label{eq:p2-morrey}
\mu_j(B_r(\mathbf{e}_j))\le C_{KM} r^{n-2}, 
\end{equation}
where the constant $C_{KM}$ is independent of $j$ and $r$.

On the other hand, define the intrinsic height
$$\Theta_j(x)=\epsilon_j^{\frac 1 2} W_j^{\frac 2{n-2}}.$$
If $\Theta_j$ is unbounded near $\mathbf{e}_j$, a suitable selection produce points $z_j$, heights $H_j=W_j(z_j)$ and intrinsic radii
$$r_j^{\rm int}=\epsilon_j^{-\frac 1 2}H_j^{-\frac 2{n-2}} =\Theta_j(z_j)^{-1}\to 0.$$
Then the normalized functions 
$$
V_j(y)=H_j^{-1}W_j(z_j+r_j^{\rm int} y )
$$ 
satisfy 
$$-\Delta V_j = V_j^{\frac{n+2}{n-2}},\qquad V_j(0)=1, $$
and are uniformly bounded from above on a fixed ball. Harnack inequality then gives a positive lower bound near the origin. Consequently, the original source measure $\mu_j$ contains a fixed positive mass
$$
\mu_j(B_{cr_j^{\rm int}}(z_j))\ge m_*>0.
$$
This contradicts the Morrey estimate \eqref{eq:p2-morrey} after choosing the radius $r$ sufficiently small. Hence $\Theta_j$ is bounded near $\mathbf {e}_j$, and now one can apply Harnack and interior elliptic estimates to give compactness through the moving point. As \eqref{eq:p2-pole} yields
$$W_j(\mathbf{e}_j)\to \kappa$$
while the contact condition implies
$$W_j(\mathbf{e}_j)\to \vartheta\kappa,$$
we arrive at a contradiction and conclude the first-contact proposition. Once first-contact proposition is available, the remaining argument is basically algebraic calculation.


\begin{thebibliography}{99}

\bibitem{AD1990}
L.~Ambrosio and G.~Dal Maso,
\emph{A general chain rule for distributional derivatives},
Proc. Amer. Math. Soc. \textbf{108} (1990), 691--702.

\bibitem{ACF2023}
C.~A. Antonini, G.~Ciraolo, and A.~Farina,
\emph{Interior regularity results for inhomogeneous anisotropic quasilinear equations},
Math. Ann. \textbf{387} (2023), 1745--1776.

\bibitem{BE1991}
G. Bianchi, H. Egnell, \emph{A note on the Sobolev inequality}. J. Funct. Anal.
\textbf{100} (1991), 18--24.

\bibitem{CGS1989}
L. A. Caffarelli, B. Gidas, J. Spruck, 
\emph{Asymptotic symmetry and local behavior of semilinear elliptic equations with critical Sobolev growth}.
Comm. Pure Appl. Math. \textbf{42} (1989), no. 3, 271--297.

\bibitem{CMR2023}
G.~Catino, D.~D. Monticelli, and A.~Roncoroni,
\emph{On the critical $p$-Laplace equation},
Adv. Math. \textbf{433} (2023), Paper No.~109331, 38 pp.

\bibitem{CDGL2026}
L.~Chen, W.~Dai, C.~Gui, and Y.~Luo,
\emph{Liouville theorems for $p$-Laplacian equations in convex cones without finite-energy condition},
arXiv:2605.29281 (2026).

\bibitem{CL1991}
W.~Chen and C.~Li,
\emph{Classification of solutions of some nonlinear elliptic equations},
Duke Math. J. \textbf{63} (1991), no.~3, 615--622.

\bibitem{CFMP2009}
A. Cianchi, N. Fusco, F. Maggi, A. Pratelli, \emph{The sharp Sobolev inequality in quantitative form}. Journal of the European Mathematical Society
\textbf{11} (2009), 1105--1139. 

\bibitem{CFP2025}
G.~Ciraolo, A.~Farina, and C.~C. Polvara,
\emph{Classification results, rigidity theorems and semilinear PDEs on
Riemannian manifolds: a $P$-function approach},
J. Eur. Math. Soc. (2025), published online first.

\bibitem{CFM2018}
G. Ciraolo, A. Figalli, F. Maggi, \emph{A quantitative analysis of metrics on $\mathbb R^n$ with almost constant positive scalar curvature, with applications to fast diffusion flows}. Int. Math. Res. Not. IMRN 2018, no. \textbf{21}, 6780–6797.

\bibitem{CFR2020}
G.~Ciraolo, A.~Figalli, and A.~Roncoroni,
\emph{Symmetry results for critical anisotropic $p$-Laplacian equations in convex cones},
Geom. Funct. Anal. \textbf{30} (2020), no.~3, 770--803.


\bibitem{CG2026}
G.~Ciraolo and M.~Gatti,
\emph{Classification results for bounded positive solutions to the critical $p$-Laplace equation},
Nonlinear Anal. \textbf{272} (2026), Article 114190. 

\bibitem{CG20262}
G.~Ciraolo and M.~Gatti,
\emph{On the stability of the critical p-Laplace equation},
J. Funct. Anal. 291 (2026), no. 7, Paper No. 111575.

\bibitem{DMMS2014}
L.~Damascelli, S.~Merch\'an, L.~Montoro, and B.~Sciunzi,
\emph{Radial symmetry and applications for a problem involving the
$-\Delta_p(\cdot)$ operator and critical nonlinearity in $\mathbb R^N$},
Adv. Math. \textbf{265} (2014), 313--335.

\bibitem{DMS2003}
M.~Degiovanni, A.~Musesti, and M.~Squassina,
\emph{On the regularity of solutions in the Pucci--Serrin identity},
Calc. Var. Partial Differential Equations \textbf{18} (2003), 317--334.

\bibitem{DSW2025}
B. Deng, L. Sun, J. Wei, \emph{Sharp quantitative estimates of Struwe's decomposition}. Duke Math. J.
\textbf{174} (2025), 159--228.

\bibitem{D1983}
E.~DiBenedetto,
\emph{$C^{1+\alpha}$ local regularity of weak solutions of degenerate elliptic equations},
Nonlinear Anal. \textbf{7} (1983), no.~8, 827--850.


\bibitem{EG2015}
L.~C. Evans and R.~F. Gariepy,
\emph{Measure Theory and Fine Properties of Functions},
revised edition, CRC Press, Boca Raton, 2015.

\bibitem{FG2020}
A. Figalli, F. Glaudo, \emph{On the sharp stability of critical points of the Sobolev inequality}. Arch. Ration. Mech. Anal. 
\textbf{237} (2020), 201--258.

\bibitem{FN2019}
A. Figalli, R. Neumayer, \emph{Gradient stability for the Sobolev inequality: the case $p\geq 2$}. J. Eur. Math. Soc. (JEMS) \textbf{21} (2019), no. 2, 319--354.

\bibitem{FZ2022}
A. Figalli, Y. Zhang, \emph{Sharp gradient stability for the Sobolev inequality}. Duke Math. J.  \textbf{171}
(2022), 2407–2459.

\bibitem{GT2001}
D.~Gilbarg and N.~S. Trudinger,
\emph{Elliptic Partial Differential Equations of Second Order},
Classics in Mathematics, Springer, Berlin, 2001.

\bibitem{HKM06}
J.~Heinonen, T.~Kilpel{\"a}inen, and O.~Martio, 
\emph{Nonlinear potential theory of degenerate elliptic equations}, Dover Publications, Mineola, NY, 2006.

\bibitem{KV1986}
S.~Kichenassamy and L.~V\'eron,
\emph{Singular solutions of the $p$-Laplace equation},
Math. Ann. \textbf{275} (1986), 599--615.

\bibitem{KV1987}
S.~Kichenassamy and L.~V\'eron,
\emph{Erratum: Singular solutions of the $p$-Laplace equation},
Math. Ann. \textbf{277} (1987), 352.

\bibitem{KM1994}
T.~Kilpel\"ainen and J.~Mal\'y,
\emph{The Wiener test and potential estimates for quasilinear elliptic equations},
Acta Math. \textbf{172} (1994), 137--161. 

\bibitem{KM2012}
T. Kuusi, G. Mingione,
\emph{Universal potential estimates}.
J. Funct. Anal. 262 (2012), no. 10, 4205--4269.

\bibitem{L1999}
Y.~Y. Li,
\emph{A Harnack type inequality: the method of moving planes},
Comm. Math. Phys. \textbf{200} (1999), no.~2, 421--444.

\bibitem{LZ2003}
Y.~Y. Li and L.~Zhang,
\emph{Liouville-type theorems and Harnack-type inequalities for semilinear elliptic equations},
J. Anal. Math. \textbf{90} (2003), 27--87.

\bibitem{L2016}
P. Lindqvist, \emph{A Remark on the Kelvin Transform for a Quasilinear Equation}, 
arXiv:1606.02563 (2016).


\bibitem{LZ2025}
G. Liu, Y. R.-Y. Zhang, \emph{Sharp stability for critical points of the Sobolev inequality in the absence of bubbling}, arXiv:2503.02340 (2025).


\bibitem{MW2024}
X.-N.~Ma and T.~Wu,
\emph{The application of the invariant tensor technique in the
classification of solutions to semilinear elliptic and sub-elliptic partial differential equations} (in Chinese),
Sci. Sin. Math. \textbf{54} (2024), no.~10, 1627--1648.


\bibitem{O1971}
M.~Obata,
\emph{The conjectures on conformal transformations of Riemannian manifolds},
J. Differential Geometry \textbf{6} (1971/72), 247--258.


\bibitem{O2025}
Q. Ou,
\emph{On the classification of entire solutions to the critical $p$-Laplace equation},
Math. Ann. \textbf{392} (2025), no. 2, 1711--1729.


\bibitem{QZ2026}
G.~Qin and Y.~R.-Y.~Zhang,
\emph{A Pohozaev-type neck proof of a conditional Harnack inequality in the critical $p$-Laplacian setting},
arXiv:2606.05990 (2026).

\bibitem{SZ1996}
R.~Schoen and D.~Zhang,
\emph{Prescribed scalar curvature on the $n$-sphere},
Calc. Var. Partial Differential Equations \textbf{4} (1996), no.~1, 1--25.

\bibitem{S2016}
B.~Sciunzi,
\emph{Classification of positive $\mathcal D^{1,p}(\mathbb R^N)$-solutions to the critical $p$-Laplace equation in $\mathbb R^N$},
Adv. Math. \textbf{291} (2016), 12--23.

\bibitem{SW2025}
L. Sun, Y. Wang, \emph{Critical quasilinear equations on Riemannian manifolds}, arXiv:2502.08495 (2025).


\bibitem{T1984}
P.~Tolksdorf,
\emph{Regularity for a more general class of quasilinear elliptic equations},
J. Differential Equations \textbf{51} (1984), 126--150.


\bibitem{V1984}
J.~L. V\'azquez,
\emph{A strong maximum principle for some quasilinear elliptic equations},
Appl. Math. Optim. \textbf{12} (1984), 191--202.

\bibitem{V2016}
J.~V\'etois,
\emph{A priori estimates and application to the symmetry of solutions for critical $p$-Laplace equations},
J. Differential Equations \textbf{260} (2016), no.~1, 149--161.

\bibitem{V2024}
J.~V\'etois,
\emph{A note on the classification of positive solutions to the critical
$p$-Laplace equation in $\mathbb R^n$},
Adv. Nonlinear Stud. \textbf{24} (2024), no.~3, 543--552.

\bibitem{YZ2026}
S.~Yan and H.~Zhou,
\emph{Domain variations and Pohozaev identities for weak solutions of $p$-Laplacian equation},
Acta Math. Sci. Ser. B (Engl. Ed.) \textbf{46} (2026), no.~2, 519--528.

\end{thebibliography}
\end{document}